\documentclass[abstract=true,10pt]{scrartcl}

\RequirePackage{amsthm,amsmath,amsfonts,amssymb}
\RequirePackage{graphicx}

\usepackage[left=2cm,right=2.5cm]{geometry}
\usepackage{authblk} 
\usepackage{epsfig}
\usepackage{color}
\usepackage{dsfont}
\usepackage{bm}
\usepackage{verbatim}
\usepackage{stmaryrd}
\usepackage{bbm}			
\usepackage{enumerate} 
\usepackage{nicefrac}
\usepackage{xfrac}
\usepackage{units} 
\usepackage{mathtools}
\usepackage{pdfcomment}
\usepackage{url}
\usepackage{soul}
\usepackage[labelfont={bf,sf},font={footnotesize},labelsep=space,format=plain]{caption}
\usepackage{acronym}
\usepackage{mathrsfs}
\usepackage[T1]{fontenc}
\usepackage{caption, booktabs}
\usepackage{textcomp}
\usepackage{multirow}
\usepackage{lipsum}
\usepackage{makecell}
\usepackage[section]{placeins}
\usepackage{hologo}
\usepackage{xcolor}
\usepackage{dutchcal}

 \theoremstyle{plain}
 \newtheorem{theorem}{Theorem}[section]
 \newtheorem{Lemma}[theorem]{Lemma}
 \newtheorem{Cor}[theorem]{Corollary}

 \theoremstyle{definition}

 \newtheorem{Rem}[theorem]{Remark}
 \newtheorem{?}[theorem]{Problem}
 \newtheorem{Ex}[theorem]{Example}

\providecommand{\keywords}[1]
{
  \small	
  \textbf{\textit{Keywords---}} #1
}

\makeatletter
\renewcommand{\maketitle}{\bgroup\setlength{\parindent}{0pt}
\begin{center}
  \textbf{\@title}
\end{center}

\begin{flushleft}
	\@author
\end{flushleft}\egroup
}
\makeatother

\title{\begin{LARGE}On differentiability and mass distributions of typical bivariate copulas\end{LARGE}}

\date{}

\author[1,a]{Nicolas Dietrich}
\author[1,b]{Wolfgang Trutschnig}
\affil[1]{\begin{small} Department of Artificial Intelligence and Human Interfaces, University of Salzburg, Austria \end{small}}

\affil[a]{\begin{small}\url{nicolaspascal.dietrich@plus.ac.at}\end{small}}
\affil[b]{\begin{small}\url{wolfgang.trutschnig@plus.ac.at}\end{small}}

\begin{document}
\maketitle

\begin{abstract}
Despite the fact that copulas are commonly considered as analytically smooth/regular objects, derivatives of 
copulas have to be handled with care.  
Triggered by a recently published result characterizing multivariate copulas via $(d-1)$-increasingness of their 
partial derivative we study the bivariate setting in detail and show that the set of non-differentiability points 
of a copula may be quite large. 
We first construct examples of copulas $C$ whose first partial derivative $\partial_1C(x,y)$ 
is pathological in the sense that for almost every $x \in (0,1)$ it does not exist on a dense subset of $y \in (0,1)$, 
and then show that the family of these copulas is dense. 
Since in commonly considered subfamilies more regularity might be typical, we then focus on bivariate Extreme Value 
copulas (EVC) and show that a topologically typical EVC is not absolutely continuous but has
degenerated discrete component, implying that in this class typically $\partial_1C(x,y)$ exists in full $(0,1)^2$.
Considering that regularity of copulas is closely related to their mass distributions we 
then study mass distributions of topologically typical copulas and prove the 
surprising fact that topologically typical bivariate copulas are mutually completely dependent with full support.
Furthermore, we use the characterization of EVCs in terms of their associated Pickands dependence measures 
$\vartheta$ on $[0,1]$, show that regularity of $\vartheta$ carries over to the corresponding EVC and prove 
that the subfamily of all EVCs whose absolutely continuous, discrete and singular component has full support 
is dense in the class of all EVCs.   
\end{abstract}

\keywords{Derivative, Extreme Value copula, Pickands dependence function, Markov kernel, Category theory}

\section{Introduction}
Constituting the link between multivariate distribution functions and their univariate marginals
 (see \cite{sklar}) as well as their resulting prominent role in the context of modeling stochastic dependence (see \cite{dur_princ,jaw2013,jaw2010,joe2014,Mai2012,mul2011,nelsen2006,rueschen2013}), over the past decades copulas 
have become an essential tool in probability theory and statistics, both from a theoretical as well 
as from an applied perspective. Considering that bivariate copulas are Lipschitz continuous, Rademacher's 
Theorem guarantees the
existence of their partial derivatives almost everywhere in the sense of the two-dimensional Lebesgue-measure $\lambda_2$. 
The exception set in Rademacher's Theorem applied to copulas can, however, be large in the sense of even having full 
support, a property that sometimes seems to have been overlooked or not handled with sufficient care. \\  
The afore-mentioned exception set is also relevant in the context of a recently published paper 
(see \cite[Corollary 4.2]{diff_char}) characterizing copulas in terms of $d$-monotonicity of their partial derivative. 
It is straightforward to show that in general there exists no set $D \subseteq \mathbb{I}$ with $\lambda(D)=1$ 
($\lambda$ denoting the one-dimensional Lebesgue measure) such that for every 
$x \in D$ the partial derivative $\partial_1C(x,y)$ of a bivariate copula $C$ exists for all $y \in D$.
We therefore first show the existence of bivariate copulas $C$ fulfilling that for $\lambda$-almost 
every $x \in \mathbb{I}$ there exists some 
$y_x \in \mathbb{I}$ such that the partial derivative $\partial_1C(x,y_x)$ does not exist and then 
construct more pathological examples exhibiting the property that for $\lambda$-almost every $x \in \mathbb{I}$ 
there exists a countably dense set $\mathcal{Q}_x \subseteq \mathbb{I}$ such that $\partial_1C(x,y)$ 
does not exist for any $y \in \mathcal{Q}_x$. Illustrating the fact that such pathological behavior 
might not be as uncommon as expected we then prove that the family of all such copulas is dense 
in the full class $\mathcal{C}$ of all bivariate copulas equipped with the standard uniform metric $d_\infty$. \\
Turing towards mixed partial derivatives $\partial_1\partial_2C$ and $\partial_2\partial_1C$ of a copula $C$ and recalling 
\cite[Theorem 2.2.8.]{nelsen2006} it is a direct consequence of Schwarz's theorem from calculus that 
in the case that $\partial_1\partial_2C$ and $\partial_2\partial_1C$ exist and are continuous everywhere on $(0,1)^2$, 
they coincide. Since in the general setting the mixed partial derivatives do not even exist everywhere, the interplay
between $\partial_1\partial_2C$ and $\partial_2\partial_1C$ is less obvious in general. Working with 
Markov kernels (regular conditional distributions) we clarify this interrelation and present a generalized 
version of Schwarz's theorem for copulas extending \cite[Theorem 2.2.8.]{nelsen2006}.

Viewing the family of copulas from a purely topological perspective and considering that the above-mentioned 
pathological examples are completely dependent (or convex combinations of completely dependent) copulas the question 
naturally arises, whether the family of completely dependent copulas is `small' or `large' in the sense of being atypical 
or typical. Working with Baire categories (see e.g. \cite{oxtoby1980}), topology offers a natural way to differentiate 
between `small' and `large' sets. We call a given subset of a topological space $(\mathcal{T},\tau)$ \textit{nowhere dense} if, and only if the interior of its closure is empty. A subset of $(\mathcal{T},\tau)$ is referred to as \textit{meager}/of 
\textit{first (Baire) category}, if it can be covered by a countable union of nowhere dense sets. A set is of \textit{second (Baire) category} if it is not meager. Moreover, a set is called \textit{co-meager} if it is the complement of a meager set.
Proceeding as in \cite{bruck1997} and returning to the concept of `small' and `large' sets, in a complete metric space meager
sets are the `small' sets, sets of second category are interpreted as `not small' and co-meager sets are the `large' sets. 
Following this very interpretation, in what follows we will refer to elements of a co-meager set as \textit{typical} 
and to elements of a meager set as \textit{atypical}. \\
Working with the afore-mentioned topological concepts C.W. Kim (see \cite{kim}) proved the striking
result that a typical bivariate copula is (mutually) completely dependent (see the subsequent section for a definition).
Building upon his result, in Section \ref{section:der_copulas} we prove a slightly stronger result: A typical copula 
$C$ is mutually completely dependent and (its associated doubly stochastic measure $\mu_C$) has full support. 
For a variety of manuscripts studying copulas in the context of Baire categories we refer to \cite{durante2022,cat_exchange,cat_quas_cop,typ_cop_sing} and the references therein.

Returning to the existence of $\partial_1C(x,y)$ one might naturally conjecture that in commonly considered 
subclasses more regularity might be typical. Here we therefore consider the well-known family of bivariate 
Extreme-Value copulas $\mathcal{C}_{ev}$ (EVC, for short) and study regularity in this class. 
Due to their simple algebraic form and practical aspects EVCs are particularly applied in 
finance and hydrology (see, e.g.,  \cite{McNeil2005,nelsen2006,salva2007}). 
Tackling regularity results and mass distributions of EVCs we first study the projection of the spectral 
measure $H$, defined on the unit simplex (see \cite{beirlant2004, falk2011, haan1977, twan}), to $\mathbb{I}$, refer to 
this measure $\vartheta$ as Pickands dependence measure, and then show how singularity/regularity properties of 
$\vartheta$ carry over to the corresponding EVC. Doing so, now allows to derive some of 
the results given in \cite{Mai2011} and \cite{evc-mass} in a simplified manner but also opens the door 
to proving the fact that EVCs whose discrete, singular and absolutely continuous component 
have full support are dense in $(\mathcal{C}_{ev},d_\infty)$. \\
Finally, again working with Pickands dependence measures we prove that - contrasting the fact that typical copulas in $(\mathcal{C},d_\infty)$ are mutually completely dependent and hence discrete in the sense introduced in the next
section - typical EVCs have degenerated discrete component, are not absolutely continuous, but have full support. 
In particular, for typical EVCs  $\partial_1C(x,y)$ exists in full $(0,1)^2$; EVCs can therefore be 
considered as typically quite regular. \\

The remainder of this paper is organized as follows: Section 2 contains notation and preliminaries 
that are used throughout the text. Section 3 studies differentiability of bivariate copulas and shows that the 
family of all bivariate copulas exhibiting the property that for $\lambda$-almost every $x \in (0,1)$
there exists a countable dense set $\mathcal{Q}_x \subseteq \mathbb{I}$ such that 
$\partial_1C(x,y)$ does not exist for any $y \in \mathcal{Q}_x$, is dense in $(\mathcal{C},d_\infty)$.  
The afore-mentioned extension of Schwarz's theorem for bivariate copulas is proved and the surprising and counter-intuitive fact, saying that a topologically typical copula $C$ in $(\mathcal{C},d_\infty)$ 
is mutually completely dependent and has full support, is established. 
Section 4 focuses on Extreme Value copulas, first recalls and then slightly extends the one-to-one-to-one interrelation 
between EVCs, Pickands dependence functions and Pickands measures. It is proved that various 
properties of the Pickands dependence measure $\vartheta$ carry over to the corresponding EVC and that the family 
of all EVCs whose discrete, singular and absolutely continuous component have full support are dense in 
$(\mathcal{C}_{ev},d_\infty)$.  
Establishing the result that typical EVCs have degenerated discrete component, are not absolutely continuous, 
but have full support, concludes Section 4. The latter results also close the circle since they imply that 
for typical EVCs $C$ the partial derivative $\partial_1C(x,y)$ exists in full $(0,1)^2$. \\
In order to simplify reading some technical lemmas and proofs have been shifted to the Appendix. 
Several additional examples and graphics illustrate the chosen procedures and some underlying ideas.

\clearpage

\section{Notation and Preliminaries}
In the sequel $\mathcal{C}$ denotes the family of all bivariate copulas. Given a copula $C\in\mathcal{C}$, as usual the corresponding doubly stochastic measure will be denoted by $\mu_C$, i.e., $\mu_C([0,x] \times [0,y]) := C(x,y)$ 
for all $(x,y) \in \mathbb{I}^2$ with $\mathbb{I} := [0,1]$. A copula $C \in \mathcal{C}$ is called exchangeable if 
the transposed copula $C^t$ coincides with $C$, i.e., if $C^t(x,y) := C(y,x)=C(x,y)$ holds for all $x,y \in \mathbb{I}$. $\mathcal{C}_e$ denotes the family of all exchangeable copulas. \\
For an arbitrary topological space $(S,\tau)$ we denote the Borel $\sigma$-field on $S$ by $\mathcal{B}(S)$, and 
let $\mathcal{P}(S)$ denote the family of all probability measures on $\mathcal{B}(S)$. Considering weak convergence of probability measures, we denote the topology induced by the afore-mentioned notion of convergence by $\tau_w$.
Moreover, for an arbitrary measure $\nu$ on $\mathcal{B}(S)$ the support of $\nu$, i.e., the complement of the union of all open sets $U$ with the property that $\nu(U) = 0$, will be denoted by $\mathrm{supp}(\nu)$.
Considering a set $E \subseteq S$, we write $\overline{E}$ for the topological closure of the set $E$ and 
$\mathrm{int}(E)$ for the interior of $E$.
Throughout this contribution, the support of a copula $C \in \mathcal{C}$ by definition will be the support of its corresponding doubly stochastic measure $\mu_C$. Considering the uniform metric $d_\infty$ on $\mathcal{C}$ it is well-known that $(\mathcal{C}, d_\infty)$ is a compact metric space.
For more background on copulas and doubly stochastic measures we refer to \cite{dur_princ,nelsen2006}.

The Lebesgue-measure on $\mathcal{B}(\mathbb{I}^2)$ is denoted by $\lambda_2$, for the univariate Lebesgue measure
we will simply write $\lambda$. For every $x \in S$ we denote the Dirac measure in $x \in S$ by $\delta_x$. 
Furthermore, the space of all Lebesgue integrable functions on $\mathbb{I}^d$ will be denoted by $L^1(\mathbb{I}^d,\mathcal{B}(\mathbb{I}^d),\lambda_d)$ for every $d \in \mathbb{N}$.
Considering two metric spaces $(S,d)$ and $(S',d')$, a Borel-measurable transformation \mbox{$T:S \rightarrow S'$} and a probability measure $\nu \in \mathcal{P}(S)$, the push-forward (measure) $\nu^T$ of $\nu$ via $T$ is defined by
$\nu^T(F):=\nu(T^{-1}(F))$ for all $F \in \mathcal{B}(S')$.

Throughout this contribution conditional distributions and Markov kernels will play a prominent role. 
A \emph{Markov kernel} from $\mathbb{R}$ to $\mathbb{R}$ is a map $K: \mathbb{R}\times\mathcal{B}(\mathbb{R}) \rightarrow \mathbb{I}$ fulfilling
that (i) for every fixed $E\in\mathcal{B}(\mathbb{R})$ the function $x\mapsto K(x,E)$ is Borel-measurable and 
\mbox{(ii) for} every $x\in\mathbb{R}$ the map $E\mapsto K(x,E)$ is a probability measure on $\mathcal{B}(\mathbb{R})$. 
If in (ii) we only have that the measure $E\mapsto K(x,E)$ fulfills $K(x,\mathbb{I})\leq 1$ (instead of $K(x,\mathbb{I})=1$),
then $K$ is called sub-Markov kernel. \\
Suppose that $(X,Y)$ is a random vector on a probability space $(\Omega,\mathcal{A},\mathbb{P})$. 
Then a Markov kernel $K(\cdot,\cdot)$ will be called a regular conditional distribution of
$Y$ given $X$ if for every set $E \in \mathcal{B}(\mathbb{R})$ the equation
$$
K(X(\omega), E) = \mathbb{E}(\mathbf{1}_E \circ Y | X)(\omega)
$$
holds for $\mathbb{P}$-almost every $\omega \in \Omega$.
It is a well-established fact that for each pair $(X,Y)$ of random variables, a regular conditional 
distribution $K(\cdot, \cdot)$ of $Y$ given $X$ exists and is unique for $\mathbb{P}^{X}$-almost every $x\in \mathbb{R}$.
Assuming that $(X,Y)$ has distribution function $C \in \mathcal{C}$ (more precisely, the natural extension of 
$C$ to $\mathbb{R}^2$ is the distribution function of $(X,Y)$), we will write $(X, Y)\sim C$, let
 $K_{C}:\mathbb{I} \times \mathcal{B}(\mathbb{I}) \to \mathbb{I}$ denote (a version of) the regular conditional 
 distribution of $Y$ given $X$ and call it the \emph{Markov kernel} of $C$.
Fixing $x \in \mathbb{I}$ and defining the $x$-section $G_x$ of an arbitrary set $G \in\mathcal{B}(\mathbb{I}^2)$ by $G_{x}:=\{y
\in \mathbb{I}: (x,y) \in G\}\in\mathcal{B}(\mathbb{I})$ applying \emph{disintegration} (see \cite[Section 5]{Kallenberg} and \cite[Section 8]{Klenke}) yields that
\begin{align}\label{eq:DI}
	\mu_C(G) = \int_{\mathbb{I}} K_{C}(x,G_x)
	\, \mathrm{d}\lambda(x).
\end{align}

In what follows, a measure $\nu$ on $(\mathbb{I}^d,\mathcal{B}(\mathbb{I}^d))$ with $d \in \mathbb{N}$ 
will be called singular (w.r.t $\lambda_d$) if, and only if (i) $\nu$ has no point masses and (ii) 
there exists some set $G \in \mathcal{B}(\mathbb{I}^d)$ with $\lambda_d(G) = 0$, $\nu(G) = \nu(\mathbb{I}^d)$. 
Obviously the doubly stochastic measure $\mu_C$ associated with a copula $C \in \mathcal{C}$ always has degenerated discrete component (in the sense that $\mu_C$ has no point masses). Following \cite{mult_arch}, however,
and using the Lebesgue-decomposition of the Markov kernel $K_C(x,\cdot)$ of $C$ into absolutely continuous, discrete and singular sub-kernels $K_C^{abs}(\cdot,\cdot), K_C^{dis}(\cdot,\cdot), K_C^{sing}(\cdot,\cdot): \mathbb{I} \times \mathcal{B}(\mathbb{I})
\rightarrow \mathbb{I}$, respectively, i.e.,
\begin{equation}\label{eq:lebesgue_decomp_markov}
K_C(x,F) = K_C^{abs}(x,F) + K_C^{dis}(x,F) + K_C^{sing}(x,F)
\end{equation}
for $x \in \mathbb{I}$ and $F \in \mathcal{B}(\mathbb{I})$, see \cite{Lange}, allows for a very natural definition of the
 absolutely continuous, the discrete and the singular component of $C$. In fact, working with disintegration and 
 equation \eqref{eq:lebesgue_decomp_markov}, we can define the absolutely continuous, the discrete and the singular 
 components $\mu_C^{abs}$, $\mu_C^{dis}$ and $\mu_C^{sing}$ of $\mu_C$ by
\begin{align}\label{eq:def_abs_dis_sing_copula}
\nonumber\mu_C^{abs}(E\times F) &:= \int_E K_C^{abs}(\mathbf{x},F) \mathrm{d}\lambda(x),\\
\nonumber\mu_C^{dis}(E\times F) &:= \int_E K_C^{dis}(\mathbf{x},F) \mathrm{d}\lambda(x),\\
\mu_C^{sing}(E\times F) &:= \int_E K_C^{sing}(\mathbf{x},F) \mathrm{d}\lambda(x),
\end{align}
for every $E \in \mathcal{B}(\mathbb{I})$ and every $F \in  \mathcal{B}(\mathbb{I})$, and extend them to full
$\mathcal{B}(\mathbb{I})$ in the standard way. 
Throughout this paper we call a copula $C$ absolutely continuous, discrete or singular if, and only if $\mu_C^{abs}(\mathbb{I}^2)=1$, $\mu_C^{dis}(\mathbb{I}^2)=1$ or $\mu_C^{sing}(\mathbb{I}^2)=1$, respectively. Moreover,  
$\mu_C^{abs}$, $\mu_C^{dis}$, $\mu_C^{sing}$ will be referred to as the absolutely continuous, the discrete, and the 
singular components of $C$ (or $\mu_C$), respectively.\\
Working particularly in the context of quantifying the extent of dependence of a random variable $Y$ on a 
random variable $X$, stronger metrics than $d_\infty$ have to be considered in $\mathcal{C}$: The Markov-kernel 
based metrics $D_p$, first introduced in \cite{sechser_paper}, are given by
$$
D_p(A,B) := \left(\int_{\mathbb{I}^2}|K_A(x,[0,y]) - K_B(x,[0,y])|^p \mathrm{d}\lambda_2(x,y)\right)^\frac{1}{p},
$$
for $p \in [1,\infty)$ and
$$
D_\infty(A,B) := \sup_{y \in \mathbb{I}}\int_{\mathbb{I}}|K_A(x,[0,y]) - K_B(x,[0,y])|\mathrm{d}\lambda(x),
$$
for $p=\infty$ and arbitrary $A,B \in \mathcal{C}$. As shown in \cite{sechser_paper} the topologies induced by the 
metrics $D_p$ are all equivalent and the the topology induced by $D_p$ (for an arbitrary $p \in [0,\infty]$) 
is strictly finer than the one induced by $d_\infty$. In other words: For fixed $p \in [1,\infty]$ and copulas
$C,C_1,C_2, ...\in \mathcal{C}$ we have that $D_p(C_n,C) \overset{n \rightarrow \infty}{\longrightarrow} 0$
implies $d_\infty(C_n,C) \overset{n \rightarrow \infty}{\longrightarrow} 0$ but not necessarily vice versa, see \cite{sechser_paper} for a counter-example. \\
From a statistical perspective, one might even consider a stronger but natural notion of convergence, 
that of weak convergence of Markov kernels (wcc, for short), first introduced in \cite{bernoulli}: 
Suppose that $C,C_1,C_2,\ldots$ are copulas with associated Markov kernels $K_C,K_{C_1},K_{C_2},...$. 
We say that the sequence $(C_n)_{n \in \mathbb{N}}$ converges weakly conditional to $C$ if, and only if for $\lambda$-almost every $x \in \mathbb{I}$ the sequence $(K_{C_n}(x,\cdot))_{n \in \mathbb{N}}$ of probability measures on $\mathcal{B}(\mathbb{I})$ converges weakly to the probability measure $K_{C}(x,\cdot)$. In the following we write $C_n \overset{wcc}{\longrightarrow} C$ to denote weak conditional convergence of $(C_n)_{n \in \mathbb{N}}$ to $C$. 
It is straightforward to show that wcc implies $D_p$ convergence but not necessarily vice versa 
(again see \cite{bernoulli}). \\
We call a map $h \colon \mathbb{I} \rightarrow \mathbb{I}$ $\lambda$-preserving if, and only if $\lambda^h(F) = \lambda(F)$ holds for every $F \in \mathcal{B}(\mathbb{I})$. 
A copula $C \in \mathcal{C}$ is called completely dependent if there exists some $\lambda$-preserving transformation $h$ such that $C$ concentrates its mass on the graph of $h$, or, equivalently, if its corresponding Markov kernel is given by $K_C(x,F) = \mathbf{1}_F(h(x))$ for $\lambda$-almost all $x \in \mathbb{I}$ (see \cite{sechser_paper} for more 
equivalent formulations). 
If, in addition, $h$ is bijective, we call the associated completely dependent copula mutually completely dependent. Throughout this contribution we will denote the family of all completely dependent copulas by $\mathcal{C}_d$ 
and the family of all mutually completely dependent copulas by $\mathcal{C}_{mcd}$. 
A $\lambda$-preserving map $h \colon \mathbb{I} \rightarrow \mathbb{I}$ will be called a (classical) equidistant even shuffle of $M$ with $N \in \mathbb{N}$ stripes if, and only if $h$ is linear with slope $1$ on each interval $I_N^i := (\frac{i-1}{N},\frac{i}{N})$, injective on $\bigcup_{i=1}^N(\frac{i-1}{N},\frac{i}{N})$ and just permutes the intervals $I_N^1,...,I_N^N$. In the sequel $\mathcal{S}_N$ will denote the family of all completely dependent copulas whose corresponding $\lambda$-preserving transformation is an even shuffle with $N$ stripes. The space of all completely dependent copulas 
whose associated $\lambda$-preserving transformation is an equidistant shuffle will be denoted by $\mathcal{S} := \bigcup_{N \in \mathbb{N}} \mathcal{S}_N$. For more information on completely dependent copulas we refer to 
\cite{nelsen2006,sechser_paper}.\\
The last concept we will need is that of a checkerboard copula: Suppose that $B \in \mathcal{C}$ and $N \in \mathbb{N}$
are arbitrary but fixed and that $T=(t_{i,j}^N)_{i,j=1}^N$ is a matrix fulfilling that $(N \cdot t_{i,j}^N)_{i,j=1}^N$ is doubly stochastic. Furthermore, for every $(i,j) \in \{1,\ldots,N\}^2 $ define the affine 
transformation $w_{i,j}^N: \mathbb{I}^2 \rightarrow R_{i,j}:=I_i^N \times I_j^N$ by 
$$
w_{i,j}^N(x,y) = \left(\frac{i-1}{N} + \frac{x}{N},\frac{j-1}{N}+\frac{y}{N}\right).
$$
Then the copula $CB_T^{B}$, defined implicitly via 
\begin{equation}\label{eq:checkerboard}
\mu_{CB_T^{B}}(G) := \sum_{i,j=1}^Nt_{i,j}^{N}\mu_B^{w_{i,j}^N}(G \cap R_{ij}), \quad G \in \mathcal{B}(\mathbb{I}^2)
\end{equation}
is called the $T$-checkerboard of $B$. To simplify notation we will let $\mathcal{CB}_{N}^{B}$ denote the 
family of all $CB_T^{B}$ with $T$ being an $N \times N$ matrix as described above and write 
$$\mathcal{CB}^B := \bigcup_{N \in \mathbb{N}}\mathcal{CB}_{N}^{B}.$$
For a given copula $A \in \mathcal{C}$ the $N$-checkerboard-$B$ approximation of $A$ is defined via 
\begin{equation}\label{eq:checkerboard_approx}
\mu_{CB_N^{B}(A)}(G) = \sum_{i,j=1}^N\mu_A(R_{i,j}^{N})\mu_B^{w_{i,j}^{N}}(G \cap R_{ij}^{N})
\end{equation}
for arbitrary $G \in \mathcal{B}(\mathbb{I}^2)$. For more background on checkerboards we refer to \cite{bernoulli}
and the references therein.
\clearpage
\section{Differentiability of copulas and typical mass distributions} \label{section:der_copulas}
We first derive various results on differentiability and then study regularity of topologically typical copulas.
\subsection{Differentiability results}
Corollary 4.2 in \cite{diff_char} translated to the bivariate setting 
states that a function $C \colon \mathbb{I}^2 \rightarrow \mathbb{I}$ that is grounded and has uniform marginals 
and absolutely continuous sections is a bivariate copula if, and only if its first partial derivative 
$y \mapsto \partial_1C(x,y)$ is non-decreasing for $\lambda$-almost all $x \in \mathbb{I}$.
Here we focus on differentiability and start with providing examples of copulas $C \in \mathcal{C}$ fulfilling 
the property that for $\lambda$-almost every $x \in \mathbb{I}$ there exists some point $y = y_x \in \mathbb{I}$ such that $\partial_1C(x,y)$ does not exist. Throughout this contribution we denote the family of all such copulas by $\mathcal{C}_p$.
The subsequent example shows that all equidistant shuffels are elements of $\mathcal{C}_p$.
\begin{Ex}[Non-differentiability in one point] \label{Ex:Shuffels}
Suppose that $C \in \mathcal{S}$. Then there exists some $N \in \mathbb{N}$ with $C \in \mathcal{S}_N$. 
Let $x \in \mathbb{I} \setminus \bigcup_{i = 0}^N\{\frac{i}{N}\}$ be arbitrary but fixed. Then 
there exists some $j \in \{1,...,N\}$ with $x \in (\frac{j-1}{N},\frac{j}{N})$. Letting $h$ denote the corresponding 
$\lambda$-preserving transformation, setting $y = h(x)$, then using the fact that $h$ is monotonically increasing on $(\frac{j-1}{N},\frac{j}{N})$ and calculating the right- and left-hand partial derivative, respectively, yields
\begin{align*}
\partial_1^{+}C(x,h(x)) &= \lim_{s \downarrow 0}\frac{C(x + s,h(x)) - C(x,h(x))}{s} = 0
\end{align*}
as well as
$$
\partial_1^{-}C(x,h(x)) \lim_{s \uparrow 0}\frac{C(x + s,h(x)) - C(x,h(x))}{s} = 1.
$$
The left-hand and right-hand derivatives exist but do not coincide, implying that $\partial_1 C(x,T(x))$ does not exist by definition. Figure \ref{fig:shuffles} depicts two examples of elements of shuffles and points of non-differentiability.  
\end{Ex}
\begin{figure}[!ht]
	\centering
	\includegraphics[width=0.9\textwidth]{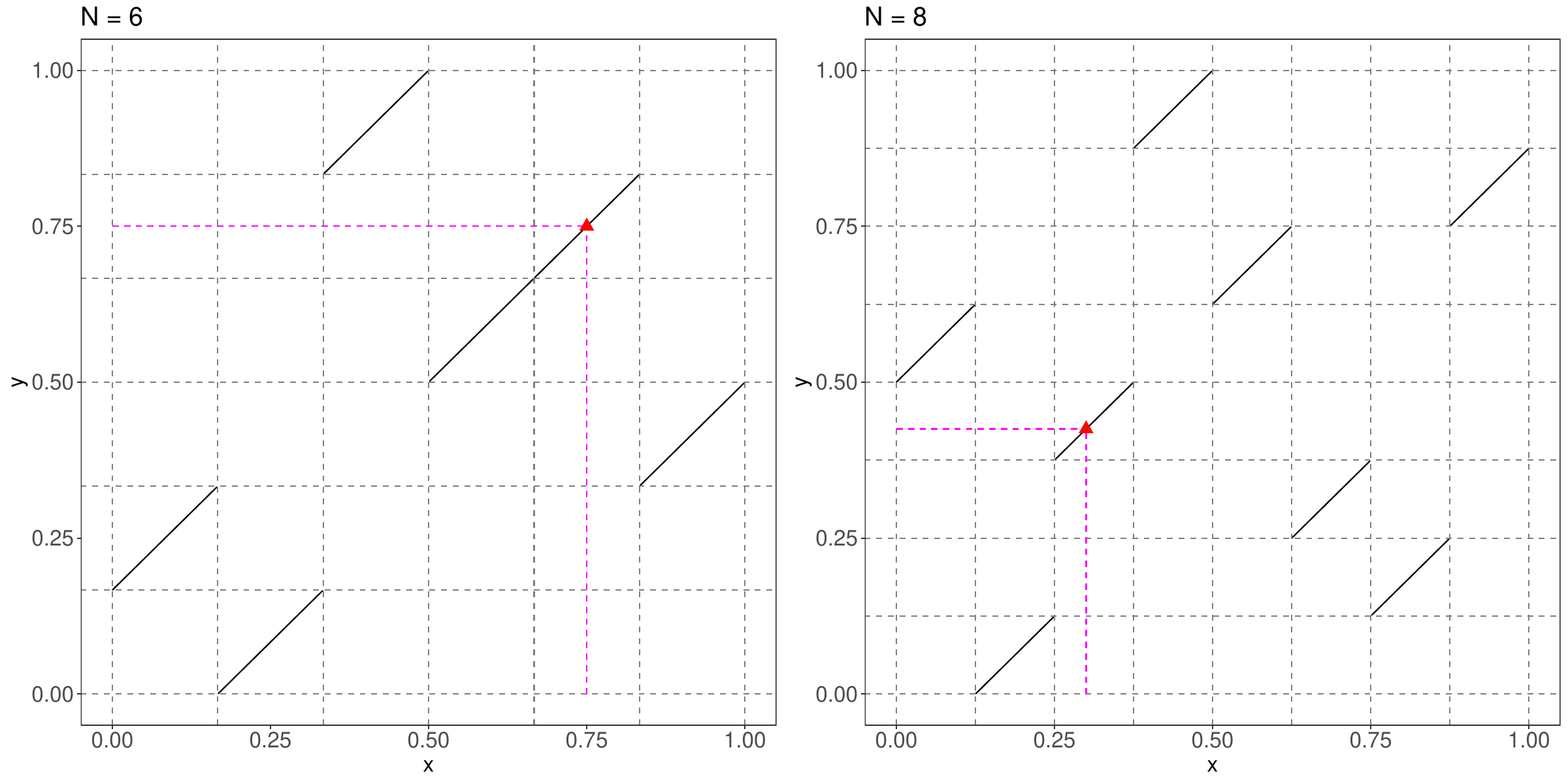}
	\caption{Two examples of equidistant shuffles as studied in Example \ref{Ex:Shuffels}. The red triangles depict 
	points where the partial derivative w.r.t. $x$ does not exist.}
    \label{fig:shuffles}
    \end{figure}
The family of points of non-differentiability may be much larger - much more pathological cases than the one mentioned in Example \ref{Ex:Shuffels} exist: As a second example we show the existence of a copula $C$ with the following property: 
There exists some set $\Lambda \in \mathcal{B}(\mathbb{I})$ with $\lambda(\Lambda) = 1$ such that for every $x \in \Lambda$ there exists a dense set $\mathcal{Q}_x \subset \mathbb{I}$ such that $\partial_1C(x,y)$ does not exist for any $y\in \mathcal{Q}_x$. In the sequel the family of all copulas having this property will be denoted by $\mathcal{C}_{\mathcal{Q}}$.
\begin{Ex}[Non-differentiability on a dense subset] \label{ex:rotation}
Consider the $\lambda$-preserving rotations $R_r(x) = x + r \text{ (mod 1)}$ for $r \in [0,1)$ and define 
$K \colon \mathbb{I} \times \mathcal{B}(\mathbb{I}) \rightarrow \mathbb{I}$ by
$$
K(x,E) := \sum_{n \in \mathbb{N}}\frac{1}{2^n}\delta_{R_{r_n}}(E) =  \sum_{n \in \mathbb{N}}\frac{1}{2^n}\mathbf{1}_E(R_{r_n}(x))
$$
for $\{r_1,r_2,...\} = \mathbb{Q}_{0,1} := \mathbb{Q} \cap [0,1)$ where $r_i \neq r_j$ for all $i \neq j$. Then
$K$ is obviously measurable in the first and a probability measure in the second argument, i.e., $K(\cdot,\cdot)$ is a Markov kernel from $\mathbb{I}$ to $\mathbb{I}$. Applying monotone convergence together with the fact that the rotations $R_{r_n}$ are $\lambda$-preserving yields that
\begin{align*}
\int_{\mathbb{I}}K(x,E) \mathrm{d}\lambda(x) = \int_{\mathbb{I}} \sum_{n \in \mathbb{N}}\frac{1}{2^n}\mathbf{1}_E(R_{r_n}(x)) \mathrm{d}\lambda(x) = \sum_{n \in \mathbb{N}}\int_{\mathbb{I}}\frac{1}{2^n}\mathbf{1}_E(R_{r_n}(x)) \mathrm{d}\lambda(x) = \lambda(E),
\end{align*}
so $K(\cdot,\cdot)$ is the Markov kernel of a unique copula $C$. 
Fix  $x_0 \in \mathbb{I}\setminus \mathbb{Q}$ and set $y_j := R_{r_j}(x_0)$. Then obviously the set $\{y_j \colon j \in \mathbb{N}\}$ is dense in $\mathbb{I}$. Writing
\begin{equation}\label{eq:decomp_kernel}
K(x,[0,y_j]) = \underbrace{\sum_{n \neq j}\frac{1}{2^n}\mathbf{1}_{[0,y_j]}(R_{r_n}(x))}_{:=L(x,y_j)} + \underbrace{\frac{1}{2^j}\mathbf{1}_{[0,y_j]}(R_{r_j}(x))}_{:=G_{r_j}(x,y_j)}
\end{equation}
and using the fact that $x \mapsto \mathbf{1}_{[0,y_j]}(R_{r_n}(x))$ is continuous for every $n \neq j$, absolute convergence of the series $L(x,y_j)$ implies that $x \mapsto L(x,y_j)$ is continuous in $x_0$. Applying disintegration and using equation \eqref{eq:decomp_kernel} yields
\begin{equation*}
C(x,y_j) = \int_{[0,x]} K(s,[0,y_j]) \mathrm{d}\lambda(s) = \underbrace{\int_{[0,x]}L(s,y_j) \mathrm{d}\lambda(s)}_{=:A(x,y_j)} + \underbrace{\int_{[0,x]}G_{r_j}(s,y_j) \mathrm{d}\lambda(s)}_{=:B(x,y_j)},
\end{equation*}
for every $x \in \mathbb{I}$.
Calculating the right-hand partial derivative of $A$ in $x_0$, applying continuity of $x \mapsto L(x,y_j)$ in $x_0$ and using the fact that every point of continuity is a Lebesgue point (see \cite{rudin1974}) yields
$$
\partial_1^+A(x_0,y_j)= \lim_{h \downarrow 0}\frac{1}{h}\int_{[x_0,x_0+h]}L(s,y_j) \mathrm{d}\lambda(s) = L(x_0,y_j).
$$
Proceeding analogously yields that the left-hand derivative of $A$ in $x_0$ exists. 
Using the fact that $x_0 \neq 1-q_j$ and calculating the right-hand partial derivative of $B$ in $x_0$ yields 
$$
\partial_1^+ B(x_0,y_j) = \lim_{h \downarrow 0}\frac{1}{h}\int_{[x_0,x_0+h]}\frac{1}{2^j}\mathbf{1}_{[0,R_{r_j}(x_0)]}(R_{r_j}(s)) \mathrm{d}\lambda(s) = 0,
$$
and proceeding analogously for the left-hand partial derivative in $x_0$ we get
\begin{align*}
\partial_1^- B(x_0,y_j) =  \lim_{h \downarrow 0}\frac{1}{h}\int_{[x_0-h,x_0]}\frac{1}{2^j}\mathbf{1}_{[0,R_{r_j}(x_0)]}(R_{r_j}(s))  \mathrm{d}\lambda(s) = \frac{1}{2^j}.
\end{align*}
Altogether it follows that $\partial_1 C(x_0,y_j)$ does not exist. Considering that $x_0 \in \mathbb{I}\setminus\mathbb{Q}$ and $j \in \mathbb{N}$ were arbitrary completes the proof.
\end{Ex}
Working with the previous example in combination with checkerboard copulas allows to show that
 $\mathcal{C}_{\mathcal{Q}}$ is dense in $\mathcal{C}$, implying that pathological cases of non-differentiability 
 in the sense of Example \ref{ex:rotation} can be found `everywhere' in $\mathcal{C}$ even with respect to the finer
 topology induced by the metric $D_p$.
\begin{theorem}\label{thm:pathological_dense}
The set $\mathcal{C}_{\mathcal{Q}}$ is dense in $(\mathcal{C},D_p)$ for every $p \in [1,\infty]$.
\end{theorem}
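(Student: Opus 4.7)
The plan is to use the pathological copula $C_*$ constructed in Example \ref{ex:rotation} as the base of a checkerboard approximation, combined with a small perturbation that guarantees every cell of the approximating grid carries positive mass. Given $A \in \mathcal{C}$ and $\varepsilon > 0$, I would first set $A_\delta := (1-\delta) A + \delta\, \Pi$, where $\Pi$ denotes the independence copula and $\delta > 0$ is chosen so that $D_p(A_\delta, A) < \varepsilon/2$. The crucial feature of this perturbation is that $\mu_{A_\delta}(R_{i,j}^N) = (1-\delta)\mu_A(R_{i,j}^N) + \delta/N^2 > 0$ for every $N \in \mathbb{N}$ and every pair $(i,j)$.

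Next, I would consider $\tilde A_N := CB_N^{C_*}(A_\delta)$. A direct calculation using Markov kernels shows that, for $(x,y) \in R_{i,j}^N$ with $\tilde x = N(x - (i-1)/N)$ and $\tilde y = N(y - (j-1)/N)$,
$$K_{CB_N^{C_*}(A_\delta)}(x, [0,y]) - K_{CB_N^{\Pi}(A_\delta)}(x, [0,y]) = N\mu_{A_\delta}(R_{i,j}^N)\bigl(K_{C_*}(\tilde x, [0,\tilde y]) - \tilde y\bigr).$$
Using $N\mu_{A_\delta}(R_{i,j}^N) \leq 1$ together with the change-of-variables formula, one obtains $D_p(CB_N^{C_*}(A_\delta), CB_N^\Pi(A_\delta)) \to 0$ as $N \to \infty$. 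Combined with the standard checkerboard convergence $CB_N^\Pi(A_\delta) \to A_\delta$ in $D_p$ (see \cite{bernoulli}), this allows me to pick $N$ large enough that $D_p(\tilde A_N, A_\delta) < \varepsilon/2$, and the triangle inequality yields $D_p(\tilde A_N, A) < \varepsilon$.

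The key step is to verify $\tilde A_N \in \mathcal{C}_{\mathcal{Q}}$. Expanding \eqref{eq:checkerboard_approx} for $x \in I_i^N$ and $y \in I_j^N$ gives
$$\tilde A_N(x, y) = \sum_{\substack{i' < i\\ j' < j}} \mu_{A_\delta}(R_{i',j'}^N) + \tilde x \sum_{j' < j} \mu_{A_\delta}(R_{i,j'}^N) + \tilde y \sum_{i' < i} \mu_{A_\delta}(R_{i',j}^N) + \mu_{A_\delta}(R_{i,j}^N)\, C_*(\tilde x, \tilde y),$$
so differentiating in $x$ yields $\partial_1 \tilde A_N(x, y) = N \sum_{j' < j}\mu_{A_\delta}(R_{i,j'}^N) + N \mu_{A_\delta}(R_{i,j}^N)\, \partial_1 C_*(\tilde x, \tilde y)$. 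Since $\mu_{A_\delta}(R_{i,j}^N) > 0$ by construction, this partial derivative exists if and only if $\partial_1 C_*(\tilde x, \tilde y)$ exists. By Example \ref{ex:rotation}, for $\lambda$-almost every $\tilde x \in (0,1)$ the set of $\tilde y$ at which $\partial_1 C_*(\tilde x, \tilde y)$ fails to exist is dense in $\mathbb{I}$; rescaling back, for $\lambda$-almost every $x \in I_i^N$ the bad $y$-values are dense in each strip $I_j^N$. Because the perturbation ensures all cells are non-empty, taking the union over $j \in \{1,\ldots,N\}$ produces a dense subset of the full interval $\mathbb{I}$ on which $\partial_1 \tilde A_N(x,\cdot)$ does not exist, confirming $\tilde A_N \in \mathcal{C}_{\mathcal{Q}}$.

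The main obstacle is precisely the density-in-$\mathbb{I}$ requirement: without first passing to $A_\delta$, any empty cell $R_{i,j}^N$ of $\mu_A$ would yield an entire strip of $y$-values on which the checkerboard is locally affine in $x$ and hence differentiable, so the non-differentiability set would only be dense in the support of $\mu_A$ rather than in all of $\mathbb{I}$. The $\Pi$-perturbation circumvents this obstruction while barely displacing $A$ in $D_p$.
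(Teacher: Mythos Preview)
Your proof is correct and follows essentially the same strategy as the paper: perturb $A$ by $\Pi$ to ensure every checkerboard cell has positive mass, then take the $C_*$-checkerboard approximation and verify it lies in $\mathcal{C}_{\mathcal{Q}}$. The only (minor) difference is that the paper bypasses your intermediate comparison with $CB_N^\Pi(A_\delta)$ by invoking \cite[Theorem 3.2]{bernoulli} directly, which already gives $CB_N^{B}(A_\delta) \overset{wcc}{\longrightarrow} A_\delta$ for \emph{any} base copula $B$; on the other hand, the paper merely asserts without detail that $CB_N^{B}(C_n) \in \mathcal{C}_{\mathcal{Q}}$, whereas you carry out the explicit verification.
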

\begin{proof}
Fix $C \in \mathcal{C}$ and $n \in \mathbb{N}$. Choose an arbitrary $B \in \mathcal{C}_\mathcal{Q}$ and consider the $B$-checkerboard approximation  $CB_N^B(C_n)$ of the copula 
 $C_n:=(1-\frac{1}{n})C + \frac{1}{n} \, \Pi$ according to equation \eqref{eq:checkerboard_approx}. Obviously 
$C_n$ has full support so it follows immediately that $CB_N^B(C_n) \in \mathcal{C}_{\mathcal{Q}}$ holds for every 
$N \in \mathbb{N}$. Furthermore, applying \cite[Theorem 3.2]{bernoulli} yields that $CB_N^B(C_n) \overset{wcc}{\longrightarrow} C_n$ as $N \rightarrow \infty$, implying $\lim_{n \rightarrow \infty} D_p(CB_N^B(C_n),C)=0$. 
Finally, considering that we also have weak conditional convergence
of $(C_n)_{n \in \mathbb{N}}$ to $C$ for $n \rightarrow \infty$ the result follows.
\end{proof}
Using the interrelations between $D_p$-convergence and convergence w.r.t $d_\infty$
Theorem \ref{thm:pathological_dense} has the following direct consequence:
\begin{Cor}\label{cor:pathological_dense}
The set $\mathcal{C}_{\mathcal{Q}}$ is dense in $(\mathcal{C},d_\infty)$.
\end{Cor}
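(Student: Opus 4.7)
The plan is to deduce this density statement directly from Theorem \ref{thm:pathological_dense} by invoking the comparison between the topologies induced by $D_p$ and by $d_\infty$ that was recorded in Section 2. Concretely, for an arbitrary $C \in \mathcal{C}$ and an arbitrary $\varepsilon > 0$, I would fix some $p \in [1,\infty]$ (e.g. $p=1$) and, applying Theorem \ref{thm:pathological_dense}, select a copula $\widetilde{C} \in \mathcal{C}_{\mathcal{Q}}$ with $D_p(\widetilde{C},C)$ arbitrarily small.

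Next, I would invoke the inequality/implication already quoted in Section 2, namely that $D_p(C_n,C) \to 0$ forces $d_\infty(C_n,C) \to 0$ (equivalently, the topology generated by $D_p$ is strictly finer than the one generated by $d_\infty$, as shown in \cite{sechser_paper}). Consequently, by making $D_p(\widetilde{C},C)$ sufficiently small one can ensure $d_\infty(\widetilde{C},C) < \varepsilon$, which immediately yields density of $\mathcal{C}_{\mathcal{Q}}$ in $(\mathcal{C},d_\infty)$.

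More elegantly, I would phrase the argument sequentially: Theorem \ref{thm:pathological_dense} provides, for the particular construction in its proof, a sequence $CB_N^B(C_n) \in \mathcal{C}_{\mathcal{Q}}$ with $D_p(CB_N^B(C_n),C) \to 0$ as $n,N \to \infty$ along a suitable diagonal; hence the same sequence converges to $C$ in $d_\infty$ and witnesses the claimed density.

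There is essentially no obstacle here, since all the analytical work, in particular the construction of copulas in $\mathcal{C}_{\mathcal{Q}}$ (Example \ref{ex:rotation}) and the checkerboard approximation argument driving Theorem \ref{thm:pathological_dense}, has already been carried out. The corollary is purely a statement about passing from the finer metric $D_p$ to the coarser metric $d_\infty$, and the one-line justification is simply that density is preserved under weakening of the topology.
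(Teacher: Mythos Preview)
Your proposal is correct and matches the paper's own approach: the corollary is stated there as a direct consequence of Theorem~\ref{thm:pathological_dense} together with the fact that $D_p$-convergence implies $d_\infty$-convergence. Your write-up simply spells out this one-line argument in more detail.
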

The previous example(s) built upon discontinuity of the conditional distributions - the following result shows that without discontinuities of the conditional distribution functions non-existence of the partial derivative can not happen.
\begin{theorem}\label{thm:cont_ker}
    Suppose that $C \in \mathcal{C}$ fulfills that $\lambda$-almost all conditional distribution functions $y \mapsto K_C(x,[0,y])$ are continuous. Then there exists a set $\Lambda \in \mathcal{B}(\mathbb{I})$ with $\lambda(\Lambda) = 1$ such that
    \begin{equation}\label{eq:der_markov_kernel}
    \partial_1C(x,y) = K_C(x,[0,y])
    \end{equation}
    holds for every $x \in \Lambda$ and $y \in \mathbb{I}$.
\end{theorem}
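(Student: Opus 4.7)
The plan is to combine the Lebesgue differentiation theorem (applied pointwise in $y$ to a countable dense set) with the continuity hypothesis on the conditional distribution functions, in order to upgrade the almost-everywhere-in-$y$ statement to an everywhere-in-$y$ statement on a single full-measure set of $x$'s.

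First I would fix a countable dense subset $D \subseteq \mathbb{I}$, for instance $D = \mathbb{Q} \cap \mathbb{I}$. For each fixed $y \in D$, disintegration \eqref{eq:DI} applied to the rectangle $[0,x] \times [0,y]$ gives
\begin{equation*}
C(x,y) \;=\; \int_{[0,x]} K_C(s,[0,y])\,\mathrm{d}\lambda(s),
\end{equation*}
and since $s \mapsto K_C(s,[0,y])$ is in $L^1(\mathbb{I},\mathcal{B}(\mathbb{I}),\lambda)$, the Lebesgue differentiation theorem yields a $\lambda$-null set $N_y \in \mathcal{B}(\mathbb{I})$ such that $\partial_1 C(x,y) = K_C(x,[0,y])$ holds for every $x \in \mathbb{I} \setminus N_y$. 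Setting $N := \bigcup_{y \in D} N_y$ (still $\lambda$-null) and letting $M \in \mathcal{B}(\mathbb{I})$ be the $\lambda$-null exceptional set on which $y \mapsto K_C(x,[0,y])$ fails to be continuous, I would define $\Lambda := \mathbb{I} \setminus (N \cup M)$, so that $\lambda(\Lambda) = 1$.

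The main (but routine) work is to upgrade from $y \in D$ to arbitrary $y \in \mathbb{I}$ for a fixed $x \in \Lambda$. Here I would exploit monotonicity of the measure of horizontal strips: for any $s>0$ and rationals $y^- < y < y^+$,
\begin{equation*}
\frac{C(x+s,y^-) - C(x,y^-)}{s} \;\leq\; \frac{C(x+s,y) - C(x,y)}{s} \;\leq\; \frac{C(x+s,y^+) - C(x,y^+)}{s},
\end{equation*}
because the middle minus either side equals $\mu_C([x,x+s] \times (y,y^+])/s$ or its analogue, which is non-negative. Passing to the limit $s \downarrow 0$ and using that $x \in \Lambda \setminus N$ gives the sandwich
\begin{equation*}
K_C(x,[0,y^-]) \;\leq\; \liminf_{s \downarrow 0} \frac{C(x+s,y) - C(x,y)}{s} \;\leq\; \limsup_{s \downarrow 0} \frac{C(x+s,y) - C(x,y)}{s} \;\leq\; K_C(x,[0,y^+]).
\end{equation*}
Choosing sequences of rationals $y^-_n \uparrow y$ and $y^+_n \downarrow y$ and invoking continuity of $y \mapsto K_C(x,[0,y])$ (available because $x \notin M$) forces the right-hand partial derivative to exist and equal $K_C(x,[0,y])$. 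The identical argument for $s \uparrow 0$ handles the left-hand derivative, so $\partial_1 C(x,y)$ exists and coincides with $K_C(x,[0,y])$.

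The one delicate point is logical rather than technical: one must choose the countable dense set $D$ and form $N$ \emph{before} letting $y$ vary, so that $\Lambda$ does not depend on $y$. Continuity of the conditional distribution function at $x$ is precisely what allows the sandwich to close; without it, Example \ref{ex:rotation} shows the derivative can genuinely fail to exist on a dense set of $y$'s.
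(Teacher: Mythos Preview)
Your proposal is correct and follows essentially the same route as the paper's proof: apply Lebesgue's differentiation theorem at each rational $y$, intersect the resulting full-measure sets together with the continuity set to obtain a single $\Lambda$, and then use the sandwich from $2$-increasingness plus continuity of $y\mapsto K_C(x,[0,y])$ to pass from rational to arbitrary $y$. The only cosmetic differences are that the paper phrases the squeeze via upper and lower Dini derivatives and writes $\Lambda$ as an intersection of good sets rather than the complement of a union of null sets.
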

\begin{proof}
By assumption there exists some
 $\Lambda_1 \in \mathcal{B}(\mathbb{I})$ with $\lambda(\Lambda_1) = 1$ such that $y \mapsto K_C(x,[0,y])$ is continuous for every $x \in \Lambda_1$. Fixing $y \in \mathbb{I}$, applying disintegration and Lebesgue's differentiation theorem 
 (see \cite{rudin1974}) yields the existence of some $\Lambda_y \in \mathcal{B}(\mathbb{I})$ with $\lambda(\Lambda_y) = 1$ such that
$$
\partial_1C(x,y) = K_C(x,[0,y])
$$ holds for all $x \in \Lambda_y$. Defining $\Lambda_2 := \bigcap_{q \in \mathbb{Q} \cap \mathbb{I}}\Lambda_q$
implies $\lambda(\Lambda_2) = 1$ as well as
\begin{equation}\label{eq:derv_equal_ker}
\partial_1C(x,q) = K_C(x,[0,q])
\end{equation}
for every $x \in \Lambda_2$ and every $q \in \mathbb{Q} \cap \mathbb{I}$.
Set $\Lambda := \Lambda_1 \cap \Lambda_2 \cap (0,1)$ and fix  $x \in \Lambda$. For every $y \in \mathbb{I}$ 
and $h>0$ sufficiently small define the difference quotient $I_h(x,y)$ by
$$
I_h(x,y) := \frac{C(x+h,y) - C(x,y)}{h}.
$$
Then using $2$-increasingness of $C$ directly yields
$$
I_h(x,q_1) \leq I_h(x,y) \leq I_h(x,q_2) 
$$
for arbitrary $q_1,q_2 \in \mathbb{Q} \cap \mathbb{I}$ with $q_1 \leq y \leq q_2$. According to equation \eqref{eq:derv_equal_ker} we have
$$
\lim_{h \rightarrow 0}I_h(x,q_1) = K_C(x,[0,q_1])
$$
as well as
$$
\lim_{h \rightarrow 0}I_h(x,q_2) = K_C(x,[0,q_2]).
$$
Fixing sequences $(q_n)_{n \in \mathbb{N}},(r_n)_{n \in \mathbb{N}}$ in $\mathbb{Q} \cap \mathbb{I}$ with 
$q_n\downarrow y$ and $r_n\uparrow y$ and $\varepsilon > 0$ arbitrarily, using continuity of $y \mapsto K_C(x,[0,y])$ there 
exists some $n_0 \in \mathbb{N}$ such that
$$
K_C(x,[r_n,q_n]) < \varepsilon
$$
for all $n \geq n_0$. Working with the upper Dini-derivative (see \cite{dur_princ}) yields
$$
K_C(x,[0,y]) - \varepsilon < K_C(x,[0,r_n]) \leq \limsup_{h \downarrow 0}I_h(x,y) \leq K_C(x,[0,q_n]) < K_C(x,[0,y]) + \varepsilon.
$$
Since $\varepsilon > 0$ was arbitrary $\limsup_{h \downarrow 0}I_h(x,y) = K_C(x,[0,y])$ follows.
Proceeding analogously for the lower Dini-derivative, we finally get that $\partial_1C(x,y) = K_C(x,[0,y])$ for every
 $y \in \mathbb{I}$. Since $x \in \Lambda$ was arbitrary this completes the proof.
\end{proof}
Reinterpreting Theorem \ref{thm:cont_ker} in the context of absolutely continuous, singular and discrete components of 
$C$, equation \eqref{eq:der_markov_kernel} obviously holds if $C$ has degenerated discrete component.
This implies the following result:
\begin{Cor}\label{cor:abs_cont}
        Suppose that $C \in \mathcal{C}$ fulfills $\mu_C^{dis}(\mathbb{I}^2) = 0$. Then there exists some set $\Lambda \in \mathcal{B}(\mathbb{I})$ with $\lambda(\Lambda) = 1$ such that
        $$
        \partial_1C(x,y) = K_C(x,[0,y])
        $$
        holds for all $x \in \Lambda$ and $y \in \mathbb{I}$.\\
        Moreover, if $C \in \mathcal{C}$ is absolutely continuous with density $f$ then there exists some set $\Lambda \in \mathcal{B}(\mathbb{I})$ with $\lambda(\Lambda) = 1$ such that for every $x \in \Lambda$ and every $y \in \mathbb{I}$ we have that
    $$
    \partial_1C(x,y) = \int_{[0,y]}f(x,t) \mathrm{d}\lambda(t).
    $$
\end{Cor}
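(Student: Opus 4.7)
The plan is to deduce this corollary directly from Theorem \ref{thm:cont_ker} by showing that the assumption $\mu_C^{dis}(\mathbb{I}^2)=0$ forces the conditional distribution functions $y \mapsto K_C(x,[0,y])$ to be continuous for $\lambda$-almost every $x \in \mathbb{I}$. Concretely, I would start from the Lebesgue decomposition \eqref{eq:lebesgue_decomp_markov} of the Markov kernel and apply disintegration in the form \eqref{eq:def_abs_dis_sing_copula} to write
$$
0 = \mu_C^{dis}(\mathbb{I}^2) = \int_{\mathbb{I}} K_C^{dis}(x,\mathbb{I})\,\mathrm{d}\lambda(x).
$$
Since the integrand is non-negative, there exists $\Lambda_0 \in \mathcal{B}(\mathbb{I})$ with $\lambda(\Lambda_0)=1$ such that $K_C^{dis}(x,\mathbb{I})=0$ for every $x \in \Lambda_0$. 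Consequently, for every such $x$ the kernel reduces to $K_C(x,\cdot)=K_C^{abs}(x,\cdot)+K_C^{sing}(x,\cdot)$, a sum of two atomless measures (recall that, in the convention fixed in Section~2, singular measures have no point masses). Thus $y \mapsto K_C(x,[0,y])$ is continuous for every $x \in \Lambda_0$, and Theorem \ref{thm:cont_ker} delivers some $\Lambda \subseteq \Lambda_0$ with $\lambda(\Lambda)=1$ such that $\partial_1 C(x,y) = K_C(x,[0,y])$ for all $x \in \Lambda$, $y \in \mathbb{I}$.

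For the \emph{moreover}-part I would first note that absolute continuity of $C$ trivially gives $\mu_C^{dis}(\mathbb{I}^2)=0$, so that the first part already yields some $\Lambda_1$ of full $\lambda$-measure on which $\partial_1 C(x,y) = K_C(x,[0,y])$. It then only remains to identify $K_C(x,[0,y])$ with $\int_{[0,y]} f(x,t)\,\mathrm{d}\lambda(t)$. This follows from Fubini's theorem: for all $E,F \in \mathcal{B}(\mathbb{I})$,
$$
\int_E K_C(x,F)\,\mathrm{d}\lambda(x) \;=\; \mu_C(E \times F) \;=\; \int_E \int_F f(x,t)\,\mathrm{d}\lambda(t)\,\mathrm{d}\lambda(x),
$$
together with the $\lambda$-almost sure uniqueness of the Markov kernel. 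Applying this identity to each $F=[0,q]$ with $q \in \mathbb{Q}\cap\mathbb{I}$ and intersecting the (countably many) resulting full-measure sets yields some $\Lambda_2$ of full measure on which $K_C(x,[0,q]) = \int_{[0,q]} f(x,t)\,\mathrm{d}\lambda(t)$ for every rational $q$; the continuity established in the first step then extends this identity to all $y \in \mathbb{I}$. Setting $\Lambda := \Lambda_1 \cap \Lambda_2$ completes the argument.

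There is no real obstacle here: the corollary is essentially a bookkeeping consequence of Theorem \ref{thm:cont_ker}. The only subtlety worth double-checking is the atomlessness of the singular sub-kernel, which is guaranteed by the definition of singularity adopted in Section~2 (item (i) in the paragraph following equation \eqref{eq:DI}).
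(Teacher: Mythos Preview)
Your proposal is correct and follows essentially the same route as the paper: use disintegration to turn $\mu_C^{dis}(\mathbb{I}^2)=0$ into $K_C^{dis}(x,\mathbb{I})=0$ for $\lambda$-a.e.\ $x$, observe that the remaining absolutely continuous and singular sub-kernels yield continuous conditional distribution functions, and invoke Theorem~\ref{thm:cont_ker}. For the second part the paper simply remarks that $K_C(x,F)=\int_F f(x,t)\,\mathrm{d}\lambda(t)$ is a version of the Markov kernel, which is exactly what your Fubini/uniqueness argument spells out in more detail.
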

\begin{proof}
(i) Assume that $\mu_C^{dis}(\mathbb{I}^2) = 0$. Then according to equation \eqref{eq:def_abs_dis_sing_copula}, using 
Lebesgue's decomposition Theorem for Markov kernels, we obtain the existence of a set $\Lambda \in \mathcal{B}(\mathbb{I})$ with $\lambda(\Lambda) = 1$ such that the identity $K_C(x,[0,y]) = K_C^{abs}(x,[0,y]) + K_C^{sing}(x,[0,y])$ holds for all $y \in \mathbb{I}$ and $x \in \Lambda$. Since both $y\mapsto K_C^{abs}(x,[0,y])$ and $y\mapsto K_C^{sing}(x,[0,y])$ are
 continuous for every $x \in \Lambda$, the desired result follows by applying Theorem \ref{thm:cont_ker}. 
The second assertion is a direct consequence of the fact that in the absolutely continuous case with density $f$ 
a version of the Markov kernel $K_C(\cdot,\cdot)$ of $C$ is give by $K_C(x,F)=\int_F f(x,t)d\lambda(t)$.
\end{proof}
If the mixed partial derivatives $\partial_1\partial_2C(x,y)$ and $\partial_2\partial_1C(x,y)$ exist and are continuous for all $x,y \in (0,1)$, then according to Schwarz's theorem they need to coincide, i.e.,
\begin{equation}\label{eq:change_derivative}
\partial_1\partial_2 C(x,y) = \partial_2\partial_1 C(x,y)
\end{equation}
for all $x,y \in (0,1)$ (see e.g. \cite[Theorem 2.2.8.]{nelsen2006}). Turning to the general setting, the mixed partial derivatives $\partial_1\partial_2C$ and $\partial_2\partial_1C$ do not need to exist everywhere in $(0,1)^2$ since, as pointed out in the previous examples, fixing $x$ in a set of full $\lambda$-measure, not even the partial 
derivative $\partial_1C(x,y)$ needs to exist for all $y \in \mathbb{I}$ and therefore equation \eqref{eq:change_derivative} cannot hold for arbitrary $C \in \mathcal{C}$. 
On the other hand, for arbitrary $C \in \mathcal{C}$ with Markov kernel $K_C$, working again 
with disintegration and applying Lebesgue's differentiation theorem yields that for fixed $y \in \mathbb{I}$ there exists a set $\Lambda_y \in \mathcal{B}(\mathbb{I})$ with $\lambda(\Lambda_y) = 1$ such that
$$
\partial_1C(x,y) = K_C(x,[0,y])
$$
holds for all $x \in \Lambda_y$. In other words: The partial derivative $\partial_1C(x,y)$ coincides with the Markov kernel $K_C(\cdot,\cdot)$ on a `good' set and thus $K_C(\cdot,\cdot)$ can be seen as a regularized version of $\partial_1C$. 
Building upon this fact, working with Markov kernels we are able to generalize the result in \cite[Theorem 2.2.8]{nelsen2006} to the whole family of bivariate copulas:
\begin{theorem}\label{lem:der_kernel}
    Suppose that $C \in \mathcal{C}$ is an arbitrary copula and let $K_C$ and $K_{C^t}$ denote Markov kernels of $C$ and its transpose $C^t$, respectively. Furthermore let $f \in L^1(\mathbb{I}^2,\mathcal{B}(\mathbb{I}^2),\lambda_2)$ denote the Radon-Nikodym derivative of $\mu_C$, i.e., the density of $\mu_C^{abs}$. Then the identity
    $$
    \partial_yK_C(x,[0,y]) = f(x,y) = \partial_xK_{C^t}(y,[0,x])
    $$
    holds for $\lambda_2$-almost every $(x,y) \in \mathbb{I}^2$.
\end{theorem}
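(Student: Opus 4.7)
The plan is to establish the first identity $\partial_y K_C(x,[0,y]) = f(x,y)$ for $\lambda_2$-almost every $(x,y) \in \mathbb{I}^2$, and then obtain the second identity as a direct consequence by transposition. For the first identity, I would begin by fixing a version of the Lebesgue decomposition of the Markov kernel $K_C(x,\cdot)$ as in equation \eqref{eq:lebesgue_decomp_markov}, i.e.,
$$
K_C(x,[0,y]) = \int_{[0,y]} f_x(t)\, \mathrm{d}\lambda(t) + K_C^{dis}(x,[0,y]) + K_C^{sing}(x,[0,y]),
$$
where $f_x$ denotes the Radon--Nikodym derivative of $K_C^{abs}(x,\cdot)$ with respect to $\lambda$ and is defined for $\lambda$-almost every $x$. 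Since for such $x$ the map $y \mapsto K_C(x,[0,y])$ is non-decreasing and bounded, classical differentiation results for distribution functions (the Lebesgue differentiation theorem applied to the absolutely continuous part, plus the fact that the discrete and singular continuous parts have zero derivative $\lambda$-almost everywhere) yield $\partial_y K_C(x,[0,y]) = f_x(y)$ for $\lambda$-almost every $y$.

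Next I would identify $f_x(y)$ with $f(x,y)$. Using disintegration and the definition of $\mu_C^{abs}$ in \eqref{eq:def_abs_dis_sing_copula}, for arbitrary $E, F \in \mathcal{B}(\mathbb{I})$ we have
$$
\mu_C^{abs}(E \times F) \;=\; \int_E K_C^{abs}(x,F)\, \mathrm{d}\lambda(x) \;=\; \int_E \int_F f_x(y)\, \mathrm{d}\lambda(y)\, \mathrm{d}\lambda(x),
$$
so uniqueness of the Radon--Nikodym derivative of $\mu_C^{abs}$ with respect to $\lambda_2$ together with Fubini yields $f_x(y) = f(x,y)$ for $\lambda_2$-almost every $(x,y)$. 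Combining this identification with the differentiation step (and invoking Fubini to merge "$\lambda$-a.e.~$x$, then $\lambda$-a.e.~$y$" into "$\lambda_2$-a.e.~$(x,y)$") proves the first equality.

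For the second equality, I would exploit the fact that $\mu_{C^t}$ is the pushforward of $\mu_C$ under the transposition $T(x,y) = (y,x)$. Hence the density of $\mu_{C^t}^{abs}$ at $(x,y)$ equals $f(y,x)$, and applying the first part of the theorem (already established) to $C^t$ yields
$$
\partial_y K_{C^t}(x,[0,y]) = f(y,x) \quad \text{for } \lambda_2\text{-a.e. } (x,y).
$$
Swapping the labels of the variables then gives $\partial_x K_{C^t}(y,[0,x]) = f(x,y)$ for $\lambda_2$-almost every $(x,y)$, as required.

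The main technical point requiring care will be the measurability and Fubini step: one must argue that the exceptional set on which the pointwise derivative in $y$ fails can be chosen to depend jointly measurably on $(x,y)$, so that the conjunction of "$\lambda$-a.e.~$x$ admits a valid kernel decomposition" and "for each such $x$, $\lambda$-a.e.~$y$ is a Lebesgue point of $f_x$" translates into a single $\lambda_2$-null exceptional set in $\mathbb{I}^2$. This relies on the Borel measurability of $(x,y) \mapsto K_C(x,[0,y])$ and on the fact that the Lebesgue decomposition of sub-Markov kernels can be chosen measurably in the base variable; apart from this bookkeeping, the argument reduces to the classical differentiation theorem and uniqueness of the Radon--Nikodym derivative.
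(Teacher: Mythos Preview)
Your proposal is correct and follows essentially the same approach as the paper: decompose the Markov kernel into its absolutely continuous and singular parts, use the classical differentiation theorem to conclude that the derivative in $y$ equals the fiberwise density $\lambda_2$-a.e., and handle the transposed side by the symmetry $\mu_{C^t}^{abs}(F_2\times F_1)=\mu_C^{abs}(F_1\times F_2)$. The only noteworthy difference is organizational---the paper defines $K_C^{abs}(x,F):=\int_F f(x,t)\,\mathrm{d}\lambda(t)$ directly from the global density $f$ (avoiding your separate identification step $f_x=f(\cdot,x)$) and explicitly invokes \cite{Moshe1977} for the Borel measurability of the set where $\partial_y K_C^{\perp}(x,[0,y])$ exists, which is precisely the ``bookkeeping'' point you flag at the end.
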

\begin{proof}
Working with the decomposition in equation \eqref{eq:lebesgue_decomp_markov} and aggregating the discrete and the singular component in $K_C^\perp$ yields 
$K_C(x,F) = K_C^{\mathrm{abs}}(x,F) + K_C^{\perp}(x,F)$ as well as
    $
     K_{C^t}(x,F) = K_{C^t}^{\mathrm{abs}}(x,F) + K_{C^t}^{\perp}(x,F)
    $
    for $x \in \mathbb{I}$ and $F \in \mathcal{B}(\mathbb{I})$.
    Considering $K_C^\perp$, defining the set
    $$
    E := \{(x,y) \in \mathbb{I}^2\colon \partial_yK_C^{\perp}(x,[0,y]) = 0\},
    $$
 using the fact that $(x,y) \mapsto K_C^\perp(x,[0,y])$ is measurable function and that $y \mapsto K_C^\perp(x,[0,y])$ is non-decreasing, applying \cite[Theorem 1]{Moshe1977} yields that  
 $$\{(x,y) \in (0,1)^2\colon \partial_yK_C^{\perp}(x,[0,y]) \text{ exists}\}$$ is a Borel set.
    Hence, using singularity of $y \mapsto K_C^\perp(x,[0,y])$ implies $E \in \mathcal{B}(\mathbb{I}^2)$. 
    Fixing $x \in \mathbb{I}$ and working with $x$-cuts and Fubini's theorem yields $\lambda(E_x) = 1$ for $\lambda$-almost 
    every $x \in \mathbb{I}$ as well as
    $$
    \lambda_2(E) = \int_{\mathbb{I}}\lambda(E_x) \mathrm{d}\lambda(x) = 1.
    $$
    Proceeding analogously we obtain the existence of a set $\tilde{E} \in \mathcal{B}(\mathbb{I}^2)$ with $\lambda_2(\tilde{E}) = 1$ such that
    $
    \partial_xK_{C^t}^{\perp}(y,[0,x]) = 0
    $
    for all $(x,y) \in \tilde{E}$. Altogether there exists a set $Q := E \cap \tilde{E}$ with $\lambda(Q) = 1$ and $\partial_y K_C^\perp(x,[0,y]) = 0 = \partial_x K_{C^t}^\perp(y,[0,x])$ for all $(x,y) \in Q$. We will prove that
    $$
    \partial_y K_C^{\mathrm{abs}}(x,[0,y]) = f(x,y) = \partial_x K_{C^t}^{\mathrm{abs}}(y,[0,x])
    $$
    holds for $\lambda_2$-almost all $(x,y) \in \mathbb{I}^2$ with $f$ as in the theorem, and proceed as follows:
     Define a version of the absolutely continuous component $K_C^{abs}$ of $K_C$ by $K_C^{abs}(x,F) := \int_Ff(x,y) \mathrm{d}\lambda(y)$ for every $x \in \mathbb{I}$ and every $F \in \mathcal{B}(\mathbb{I})$. 
     Similarly to the case for $K_C^\perp$, defining
    $$
    G := \{(x,y) \in \mathbb{I}^2 \colon \partial_yK_C^{\mathrm{abs}}(x,[0,y]) \text{ exists and } \partial_yK_C(x,[0,y]) = f(x,y)\},
    $$
    and applying the same line of argumentation as done for showing measurability of $E$ yields Borel measurability of $G$. 
    Working with the fact that $y \mapsto K_C^{abs}(x,[0,y])$ is differentiable $\lambda$-almost everywhere, 
    applying Fubini's theorem yields $\lambda_2(G) = 1$.
    Considering the transposed copula $C^t$, using the identity $\mu_C^{abs}(F_1 \times F_2) = \mu_{C^t}^{abs}(F_2 \times F_1)$ for all $F_1,F_2 \in \mathcal{B}(\mathbb{I})$ and proceeding analogously to the previous case, there exists a set $\tilde{G} \in \mathcal{B}(\mathbb{I}^2)$ with $\lambda_2(\tilde{G}) = 1$ such that
    $$
    \partial_x K_{C^t}^{abs}(y,[0,x]) = f(x,y)
    $$
    holds for all $(x,y) \in \tilde{G}$. This implies that $\partial_y K_C^{\mathrm{abs}}(x,[0,y]) = f(x,y) = \partial_x K_{C^t}^{\mathrm{abs}}(y,[0,x])$ for all $(x,y) \in  M := G \cap \tilde{G}$. Using $\lambda(M) = 1$
    altogether yields $\lambda_2(Q \cap M) = 1$ as well as
    $$
    \partial_yK_C(x,[0,y]) = \partial_yK_C^{abs}(x,[0,y]) = f(x,y) = \partial_xK_C^{abs}(y,[0,x]) = \partial_xK_C(y,[0,x]),
    $$
    for all $(x,y) \in Q \cap M$. This completes the proof.
\end{proof}
\subsection{Baire category results for $\mathcal{C}$}
The copulas proposed in Example \ref{Ex:Shuffels} and Example \ref{ex:rotation} are completely dependent, 
complete dependence, however, may seem quite pathological. 
Nevertheless, using Baire categories it can be shown that complete dependence is much less pathological than
one might assume. In fact, C.W. Kim proved in \cite{kim} that topologically typical Markov-operators are induced by a 
$\lambda$-preserving map. Using the fact that the family of all Markov-operators 
(endowed with the weak operator topology) is isomorphic to $(\mathcal{C},d_\infty)$ (see \cite{darsow}), we obtain 
the following translation to $\mathcal{C}$: 
\begin{theorem}\label{thm:kim}
    $\mathcal{C}_{mcd}$ and $\mathcal{C}_{cd}$  are co-meager in $(\mathcal{C},d_\infty)$.
\end{theorem}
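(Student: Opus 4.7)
The plan is to deduce this theorem directly from C.W. Kim's result \cite{kim} via the classical isomorphism between bivariate copulas and Markov operators due to Darsow, Nguyen and Olsen \cite{darsow}. Recall that every $C \in \mathcal{C}$ corresponds to a unique Markov operator $T_C$ acting on $L^\infty(\mathbb{I})$ via
$$
T_C f(x) = \int_\mathbb{I} f(y)\, K_C(x, dy),
$$
and the assignment $C \mapsto T_C$ is a bijection between $\mathcal{C}$ and the family $\mathcal{M}$ of Markov operators. The decisive fact, proved in \cite{darsow}, is that this bijection is a homeomorphism once $\mathcal{M}$ is equipped with the weak operator topology.

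Next I would translate the two subsets of interest into operator language. A copula $C$ is completely dependent if and only if $K_C(x,\cdot) = \delta_{h(x)}$ for some $\lambda$-preserving map $h \colon \mathbb{I} \to \mathbb{I}$, which under the Darsow correspondence is equivalent to $T_C$ being the composition operator $f \mapsto f \circ h$. Similarly, $C \in \mathcal{C}_{mcd}$ corresponds to the additional requirement that $h$ be bijective. Kim's theorem in \cite{kim} then asserts that the set of Markov operators induced by an invertible $\lambda$-preserving transformation is co-meager in $\mathcal{M}$ with respect to the weak operator topology. Transporting this statement through the Darsow homeomorphism immediately yields co-meagerness of $\mathcal{C}_{mcd}$ in $(\mathcal{C}, d_\infty)$, and since $\mathcal{C}_{mcd} \subseteq \mathcal{C}_{cd}$ the complement of $\mathcal{C}_{cd}$ is contained in the (meager) complement of $\mathcal{C}_{mcd}$, so $\mathcal{C}_{cd}$ is co-meager as well.

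The main obstacle here is not mathematical but expository: one has to verify, as a bridge lemma, that $d_\infty$-convergence of a sequence $C_n \to C$ is equivalent to weak operator convergence of $T_{C_n} \to T_C$ so that meager and co-meager sets correspond on both sides, and that Kim's statement -- originally formulated for doubly stochastic operators on $L^2(\mathbb{I})$ -- translates verbatim to the class $\mathcal{M}$ used in the Darsow framework. Both points are contained in \cite{darsow} and \cite{kim}, hence the argument reduces essentially to careful bookkeeping rather than any new analytical estimate.
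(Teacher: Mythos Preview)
Your proposal is correct and matches the paper's approach exactly: the paper does not prove this theorem from scratch but simply states it as the translation of Kim's result \cite{kim} to $(\mathcal{C},d_\infty)$ via the Darsow--Nguyen--Olsen isomorphism \cite{darsow} between copulas and Markov operators. Your write-up merely supplies more detail on this translation than the paper itself gives.
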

We conjecture that every $C \in \mathcal{C}_{mcd}$ is automatically an element of $\mathcal{C}_p$, i.e., that for 
$\lambda$-almost every $x \in \mathbb{I}$ there exists some $y \in \mathbb{I}$ such that $\partial_1 C(x,y)$ does not exist.
We have, however, not been able to prove or falsify this conjecture since $\lambda$-preserving bijections may exhibit 
quite irregular behavior going far beyond being piecewise linear: As shown in \cite{Feldman} it is possible to construct 
a $\lambda$-preserving transformation $h$ such that the induced completely dependent copula $C_d$ has full support.
Motivated by this result we now prove that typical bivariate copulas have full support.
\begin{Lemma}\label{lem:typical:cop_full_supp}
The family $\{C\in \mathcal{C}\colon\mathrm{supp}(\mu_C)=\mathbb{I}^2\}$ is co-meager in $(\mathcal{C},d_\infty)$.
\end{Lemma}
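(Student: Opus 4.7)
The plan is to write the set in question as a countable intersection of dense open subsets of $(\mathcal{C}, d_\infty)$ and then invoke the Baire category theorem; completeness of the ambient space is automatic since $(\mathcal{C}, d_\infty)$ is compact.

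First I would fix a countable family $\{U_n\}_{n \in \mathbb{N}}$ of non-empty open rectangles contained in $(0,1)^2$ with rational corners. Every non-empty open subset of $\mathbb{I}^2$ contains some $U_n$, so the condition $\mathrm{supp}(\mu_C) = \mathbb{I}^2$ is equivalent to $\mu_C(U_n) > 0$ for every $n \in \mathbb{N}$. Defining $A_n := \{C \in \mathcal{C} : \mu_C(U_n) > 0\}$ thus yields the representation $\{C \in \mathcal{C} : \mathrm{supp}(\mu_C) = \mathbb{I}^2\} = \bigcap_{n \in \mathbb{N}} A_n$, and reduces the problem to showing that each $A_n$ is both open and dense in $(\mathcal{C}, d_\infty)$.

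The openness of $A_n$ is the step that deserves the most care. Here I would use the standard fact that $d_\infty$-convergence of copulas entails weak convergence of the associated doubly stochastic measures: if $d_\infty(C_m, C) \to 0$, then $\mu_{C_m}(R) \to \mu_C(R)$ for every rectangle $R \subseteq \mathbb{I}^2$ by the inclusion-exclusion formula, and this is enough to imply weak convergence. The Portmanteau theorem then guarantees that $C \mapsto \mu_C(U_n)$ is lower semi-continuous on $(\mathcal{C}, d_\infty)$, so $A_n$, being the preimage of $(0, \infty]$ under this map, is open.

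Density of each $A_n$ follows from a simple perturbation argument using the independence copula $\Pi(x,y) = xy$. Given $C \in \mathcal{C}$ and $\varepsilon \in (0,1)$, the convex combination $C_\varepsilon := (1-\varepsilon) C + \varepsilon \Pi$ again lies in $\mathcal{C}$, satisfies $d_\infty(C_\varepsilon, C) \leq \varepsilon \cdot d_\infty(C,\Pi) \leq \varepsilon$, and fulfills $\mu_{C_\varepsilon}(U_n) \geq \varepsilon\, \lambda_2(U_n) > 0$, so $C_\varepsilon \in A_n$ and $\varepsilon \downarrow 0$ gives density. Combining openness and density of each $A_n$ with the Baire category theorem then shows that $\bigcap_n A_n$ is a dense $G_\delta$ in $(\mathcal{C}, d_\infty)$, hence co-meager, as claimed. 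The only genuinely technical point is the openness step; once the passage from $d_\infty$-convergence to weak convergence of $\mu_C$ is made precise, everything else is routine.
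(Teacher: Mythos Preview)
Your proof is correct. Both your argument and the paper's follow the standard Baire-category template---express the full-support condition via a countable family, verify the relevant sets are closed/open, and obtain density by convex perturbation toward a full-support copula---but the decompositions differ. The paper parametrizes by the \emph{size} of a missing cube, setting $\mathcal{A}_N := \{C : \mathrm{supp}(\mu_C)^c \text{ contains a cube } Q \text{ with } \lambda_2(Q) \geq 1/N\}$; because this involves an existential quantifier over cubes, closedness of $\mathcal{A}_N$ requires a Bolzano--Weierstra\ss\ argument on the cube corners. You instead fix a countable basis of rational rectangles and work directly with $A_n = \{C : \mu_C(U_n) > 0\}$; openness then follows from lower semicontinuity via Portmanteau (or even more simply from the fact that $\mu_C(U_n)$ is given by the inclusion--exclusion formula in the values of $C$ at the four corners, hence is $d_\infty$-continuous), with no compactness step needed. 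Your route is the textbook one and slightly more economical; the paper's version trades the explicit basis for a compactness extraction.
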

\begin{proof}
    We define the set $ \mathcal{A}_N$ by
    $$
    \mathcal{A}_N := \{C\in \mathcal{C} \colon \mathrm{supp}(\mu_C)^c \text{ contains a cube } Q \text{ with } \lambda_2(Q) \geq \tfrac{1}{N}\}
    $$
    and first show that $\mathcal{A}_N $ is closed w.r.t. $d_\infty$. Suppose that $(C_\ell)_{\ell \in \mathbb{N}}$ 
    is a sequence in $\mathcal{A}_N$ with $\lim_{\ell \rightarrow \infty} d_\infty(C_\ell,C) = 0$ for some 
    $C \in \mathcal{C}$. Knowing that for every $\ell \in \mathbb{N}$ there exists some cube $Q_\ell = (a_\ell,b_\ell)^2$ with $\mu_{C_\ell}(Q_\ell) = 0$ and $\lambda_2(Q_\ell) \geq \frac{1}{N}$, applying Bolzano-Weierstrass yields
     the existence of subsequences $(a_{\ell_k})_{k \in \mathbb{N}}$, $(b_{\ell_k})_{k \in \mathbb{N}}$ and $a_0,b_0 \in \mathbb{I}$ such that $a_{\ell_k} \overset{k \rightarrow \infty}{\longrightarrow} a_0$ and $b_{\ell_k} \overset{k \rightarrow \infty}{\longrightarrow} b_0$. Using Lipschitz-continuity we therefore get
    \begin{align*}
    \mu_C(Q) &= C(b_0,b_0) - C(b_0,a_0) - C(a_0,b_0) + C(a_0,a_0) \\&=
    \lim_{k \rightarrow \infty} [C_{\ell_k}(b_{\ell_k},b_{\ell_k}) - C_{\ell_k}(b_{\ell_k},a_{\ell_k}) - C_{\ell_k}(a_{\ell_k},b_{\ell_k}) + C_{\ell_k}(a_{\ell_k},a_{\ell_k})] \\&=
    \lim_{k \rightarrow \infty}\mu_{C_{\ell_k}}(Q_{\ell_k}) = 0.
    \end{align*}
    Hence, considering $\frac{1}{N} \leq \lim_{k \rightarrow \infty} \lambda_2(Q_{\ell_k}) = \lim_{k \rightarrow \infty} (b_{\ell_k} - a_{\ell_k})^2 = (b_0-a_0)^2=
    \lambda_2(Q)$ finally implies that $\mathcal{A}_N$ is closed.\\ 
    Taking an arbitrary copula $C \in \mathcal{C}$ and fixing $B \in \mathcal{C}$ with $\mathrm{supp}(\mu_B) = \mathbb{I}^2$, defining $C_n$ by $C_n := (1-\frac{1}{n})C + \frac{1}{n}B$, obviously 
    $\mathrm{supp}(\mu_{C_n}) = \mathbb{I}^2$ and $d_\infty(C,C_n) \overset{n \rightarrow \infty} \longrightarrow 0$.
    This shows that the family of copulas with full support is dense in $\mathcal{C}$, implying that
     $\mathcal{A}_N$ is nowhere dense for every $N$. 
     Finally, using 
     $$\{C \in \mathcal{C} \colon \mathrm{supp}(\mu_C) \neq \mathbb{I}^2\} \subseteq \bigcup_{N \in \mathbb{N}}\mathcal{A}_N$$ yields that $\{C \in \mathcal{C} \colon \mathrm{supp}(\mu_C) = \mathbb{I}^2\}$ is co-meager in $(\mathcal{C}, d_\infty)$.
\end{proof}
We close this section with the following quite counter-intuitive result, which directly follows from  
combining Theorem \ref{thm:kim} and Lemma \ref{lem:typical:cop_full_supp} and using the fact that 
finite and countably infinite intersections of co-meager sets are co-meager as well.
\begin{Cor}\label{cor:typical_cop_full_mcd_full_support}
    A typical copula $C \in (\mathcal{C}, d_\infty)$ is mutually completely dependent and has full support.
\end{Cor}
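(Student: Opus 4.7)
The plan is to assemble the corollary by intersecting two co-meager families that have already been established in the preceding material. First I would invoke Theorem \ref{thm:kim} to conclude that $\mathcal{C}_{mcd}$ is co-meager in $(\mathcal{C},d_\infty)$. Then I would invoke Lemma \ref{lem:typical:cop_full_supp} to conclude that the family $\mathcal{F} := \{C \in \mathcal{C} : \mathrm{supp}(\mu_C) = \mathbb{I}^2\}$ is also co-meager in the same space.

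The decisive observation is the elementary Baire-category fact that a countable (in particular, finite) intersection of co-meager sets is again co-meager; this is equivalent to the statement that a countable union of meager sets remains meager, which follows at once from the definition of meager as a countable union of nowhere dense sets. Applying this closure property to the two families above yields that $\mathcal{C}_{mcd}\cap\mathcal{F}$ is co-meager in $(\mathcal{C},d_\infty)$, and by construction every element of this intersection is simultaneously a mutually completely dependent copula with full support, which is precisely the content of Corollary \ref{cor:typical_cop_full_mcd_full_support}.

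There is essentially no substantive obstacle, since all the conceptual work has already been carried out: Kim's theorem supplies mutual complete dependence for a typical copula, and Lemma \ref{lem:typical:cop_full_supp} supplies full support. The only point deserving a moment's care is to verify that both families have been shown co-meager with respect to the \emph{same} topology $d_\infty$ on the \emph{same} ambient space $\mathcal{C}$, so that one is not mixing inequivalent category structures when forming the intersection; this is immediate from the statements of the two results. The corollary therefore just records that these two typicalities coexist, and its somewhat counter-intuitive flavor stems entirely from Kim's result rather than from any additional argument here.
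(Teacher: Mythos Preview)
Your proposal is correct and matches the paper's own argument essentially verbatim: the paper derives the corollary by combining Theorem~\ref{thm:kim} with Lemma~\ref{lem:typical:cop_full_supp} and the fact that finite (and countable) intersections of co-meager sets are again co-meager.
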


\clearpage
\section{Extreme Value copulas}\label{section:EVC}
\subsection{Some preliminaries}
A copula $C \in \mathcal{C}$ is called Extreme Value copula (EVC) if there exists some copula $B \in \mathcal{C}$ such that
$$
C(x,y)  = \lim_{n \rightarrow \infty} B^n(x^\frac{1}{n},y^\frac{1}{n})
$$
for all $x,y \in \mathbb{I}$. Throughout this section the space of all bivariate EVCs will be denoted by 
$\mathcal{C}_{ev}$. It is well-known (see \cite{evc-mass}) that $(\mathcal{C}_{ev},d_\infty)$ is a compact metric space. Moreover, according to \cite{dur_princ,haan1977,nelsen2006,Pickands}, the following assertions are equivalent
\begin{itemize}
    \item[(i)] $C \in \mathcal{C}_{ev}$
    \item[(ii)] $C$ is max-stable, i.e., $C(x,y) = C^k(x^\frac{1}{k},y^\frac{1}{k})$ for all $k \in \mathbb{N}$ and $x,y \in \mathbb{I}$.
    \item[(iii)] There exists a Pickands dependence function $A$, i.e., a convex function $A \colon \mathbb{I} \rightarrow \mathbb{I}$ fulfilling $\max\{1-x,x\} \leq A(x) \leq 1$ for $x \in \mathbb{I}$, such that the identity
    \begin{equation}\label{eq:map_eq_pick_copula}
    C(x,y) = (xy)^{A\left(\frac{\log(x)}{\log(xy)}\right)}
    \end{equation}
    holds for all $x,y \in (0,1)$.
\end{itemize}
The family of all Pickands dependence functions will be denoted by $\mathcal{A}$ and we will let $C_A$ denote the unique EVC induced by $A \in \mathcal{A}$. 

For every Pickands dependence function $A \in \mathcal{A}$ we will let $D^+A(x)$ denote the right-hand derivative of 
$A$ at $x \in [0,1)$ and $D^-A(x)$ the left-hand derivative of $A$ at $x \in (0,1]$.  
Convexity of $A$ implies that $D^+A(x)=D^-A(x)$ holds for all but at most countably 
many $x \in (0,1)$, i.e., $A$ is differentiable outside a countable subset of $(0,1)$, that $D^+A$ is 
non-decreasing and right-continuous on $[0,1)$ and that $D^-A$ is non-decreasing and 
left-continuous on $(0,1]$ (see \cite{Kannan1996,Pollard2001} and the references therein). 
Setting $D^+A(1):=D^-A(1)$ allows to view 
$D^+A$ as non-decreasing and right-continuous function on the full unit interval $[0,1]$, 
which, taking into account $\max\{1-x,x\} \leq A(x) \leq 1$ for all $x \in [0,1]$, 
only assumes values in $[-1,1]$. 
Additionally (again see \cite{Kannan1996,Pollard2001} and the references therein), we have $D^-A(x)=D^+A(x-)$ for every $x \in (0,1)$.

Following \cite{beirlant2004,haan1977,Pickands} every Pickands dependence function $A$ uniquely
corresponds to a spectral measure $\nu$, i.e., a measure  on the unit simplex $\Delta_2 =
\{(x, 1 - x) \colon x \in \mathbb{I}\}$ fulfilling
$$
\int_{\Lambda_2} x \mathrm{d}\nu = \int_{\Lambda_2} y \mathrm{d}\nu = 1.
$$
Projecting $\Delta_2$ onto $\mathbb{I}$ to a normalization we can identify $\nu$ with a probability measure 
$\vartheta$ on $\mathcal{B}(\mathbb{I})$ with expected value $\frac{1}{2}$.
Throughout this section we call such a measure $\vartheta$ a Pickands dependence measure and define the family of all Pickands dependence measures by
$$
\mathcal{P}_\mathcal{A} := \left\{\vartheta \in \mathcal{P}(\mathbb{I}) \colon \int_\mathbb{I}x \mathrm{d}\vartheta(x) = \frac{1}{2}\right\}.
$$
Every $\vartheta \in \mathcal{P}_\mathcal{A}$ induces a unique Pickands dependence function and vice versa, 
see \cite{evc-mass}. In fact, according to Lemma \ref{lem:measure_pickands} the mapping $\Upsilon$, defined by 
\begin{equation}\label{eq:gamma}
\Upsilon(\vartheta)(t) := 1-t + 2\int_{[0,t]}\vartheta([0,z]) \mathrm{d}\lambda(z)
\end{equation}
maps $(\mathcal{P}_\mathcal{A},\tau_w)$ to $(\mathcal{A},\Vert \cdot \Vert_\infty)$. Throughout this section we denote the family of all absolutely continuous, discrete and singular Pickands dependence measures by $\mathcal{P}_\mathcal{A}^{abs}$, $\mathcal{P}_\mathcal{A}^{dis}$ and $\mathcal{P}_\mathcal{A}^{sing}$, respectively.
Moreover, we will call a function $F\colon \mathbb{I} \rightarrow \mathbb{I}$ a distribution function/measure generating function, if its extension via setting $F(x)=0$ for $x<0$ and $F(x)=1$ for $x>1$, is a distribution function/measure generating function.\\
It is well known and straightforward to verify that the Pickands dependence function corresponding to the Fréchet-Hoeffding upper bound $M$ is defined by $A_M(t) := \max\{1-t,t\}$ and the Pickands dependence function corresponding to the independence copula $\Pi$ fulfills $A_\Pi(t) = 1$ for all $t \in \mathbb{I}$. Moreover, the measure 
$\vartheta_M = \delta_\frac{1}{2}$ is easily seen to correspond to $M$ and 
$\vartheta=\frac{1}{2}(\delta_{0} + \delta_{1})$ to $\Pi$.
Building upon the afore-mentioned characterization of EVC via Pickands dependence functions, the map 
$\Phi$, defined via equation \eqref{eq:map_eq_pick_copula} maps $(\mathcal{A},\Vert \cdot \Vert_\infty)$ to 
$(\mathcal{C}_{ev},d_\infty)$. It is straightforward to see that both $\Upsilon$ and $\Phi$ are homeomorphisms.
\begin{Lemma}\label{lem:extreme_homeom}
    The maps $\Phi$, $\Upsilon$ and $\Phi \circ \Upsilon$ are homeomorphisms.
\end{Lemma}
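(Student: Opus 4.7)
The plan is to reduce everything to compactness. Once $\Upsilon$ and $\Phi$ are known to be continuous bijections between compact Hausdorff spaces, the Closed Map Lemma implies they are homeomorphisms, and their composition is then automatically a homeomorphism as well. Compactness of $(\mathcal{C}_{ev},d_\infty)$ is already stated above; $(\mathcal{A},\|\cdot\|_\infty)$ is compact by Arzel\`a--Ascoli, since every $A\in\mathcal{A}$ is $1$-Lipschitz (the one-sided derivatives $D^{\pm}A$ take values in $[-1,1]$, as recalled in the text); and $(\mathcal{P}_\mathcal{A},\tau_w)$ is compact because $\mathcal{P}(\mathbb{I})$ is weakly compact and the constraint $\int_\mathbb{I} x\,\mathrm{d}\vartheta=\tfrac12$ is weakly closed (the test function $x\mapsto x$ is continuous and bounded). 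Bijectivity of $\Upsilon$ and $\Phi$ is already supplied by the one-to-one correspondences recorded in the text, so the only thing left to prove is continuity of $\Upsilon$ and $\Phi$.

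For $\Upsilon$ I would take a weakly convergent sequence $\vartheta_n\to\vartheta$ in $\mathcal{P}_\mathcal{A}$ and set $F_n(z):=\vartheta_n([0,z])$, $F(z):=\vartheta([0,z])$. Weak convergence yields $F_n(z)\to F(z)$ at every continuity point of $F$, hence $\lambda$-almost everywhere. Since $0\le F_n\le 1$, dominated convergence gives
\[
\Upsilon(\vartheta_n)(t)=1-t+2\int_{[0,t]}F_n\,\mathrm{d}\lambda\;\longrightarrow\;1-t+2\int_{[0,t]}F\,\mathrm{d}\lambda=\Upsilon(\vartheta)(t)
\]
for every $t\in\mathbb{I}$, so one obtains pointwise convergence of $\Upsilon(\vartheta_n)$ to $\Upsilon(\vartheta)$ on the whole unit interval. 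The crucial extra ingredient is that each $\Upsilon(\vartheta_n)$ is itself a Pickands function, so the whole sequence is uniformly $1$-Lipschitz; equicontinuity together with pointwise convergence then upgrades the latter to uniform convergence via Arzel\`a--Ascoli.

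For $\Phi$, assume $A_n\to A$ uniformly on $\mathbb{I}$. On compact subsets of $(0,1)^2$, equation \eqref{eq:map_eq_pick_copula} combined with the (uniform) continuity of $(u,s)\mapsto u^s$ on compact subsets of $(0,1]\times[\tfrac12,1]$ delivers $C_{A_n}\to C_A$ uniformly. All EVCs coincide on $\partial\mathbb{I}^2$ (the standard copula boundary conditions) and are $1$-Lipschitz in each coordinate, so a short boundary-strip argument extends the convergence to all of $\mathbb{I}^2$, yielding $d_\infty(C_{A_n},C_A)\to 0$. Together with the previous paragraph this gives the required continuity, after which compactness closes the argument and automatically transports everything to the composition $\Phi\circ\Upsilon$.

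I expect the main obstacle to be the first continuity step: $\tau_w$ is genuinely weaker than uniform convergence, and the pointwise-to-uniform upgrade only works because the image of $\Upsilon$ lands inside the equi-Lipschitz class $\mathcal{A}$. Without this structural constraint one would at best have $\lambda$-a.e.\ pointwise convergence of the integrated distribution functions, which is insufficient for $\|\cdot\|_\infty$-continuity; the convexity and slope bounds of Pickands functions are precisely what save the day.
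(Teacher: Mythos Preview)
Your proof is correct and takes a genuinely different route from the paper. The paper first establishes bijectivity of $\Upsilon$ explicitly by writing down the inverse (via $F(t)=\tfrac{D^+A(t)+1}{2}$) and then invokes an auxiliary lemma (Lemma~\ref{lem:equiv_conv_meas_evc} in the Appendix) which directly proves the \emph{two-sided} equivalence of weak convergence of $\vartheta_n$, uniform convergence of $A_n$, and uniform convergence of $C_n$, the last equivalence being imported from \cite{bernoulli}. You instead exploit compactness of all three spaces so that the Closed Map Lemma reduces the task to one-sided continuity of $\Upsilon$ and $\Phi$, which you verify by hand. What your approach buys is self-containment: you never need the external result from \cite{bernoulli}, and the reverse continuity comes for free. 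What the paper's approach buys is a slightly sharper intermediate statement (the full equivalence in Lemma~\ref{lem:equiv_conv_meas_evc}, used elsewhere in the paper) and a marginally quicker argument for the continuity of $\Upsilon$: rather than upgrading pointwise convergence to uniform via equicontinuity, one can simply bound $|A_n(t)-A(t)|\le 2\int_0^1|F_n-F|\,\mathrm{d}\lambda$ uniformly in $t$ and apply dominated convergence once.
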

\begin{proof}
Obviously every $\vartheta \in \mathcal{P}_\mathcal{A}$ induces a unique $A \in \mathcal{A}$ via equation \eqref{eq:gamma}. On the other hand for every $A \in \mathcal{A}$, defining
\begin{align*}
F(t) := \begin{cases}
    \frac{D^+A(t)+1}{2},& \text{ if } t\in [0,1)\\
    1, & \text{ if } t = 1
\end{cases}
\end{align*}
it follows that $F$ induces a unique probability measure $\vartheta_A$. This measure is indeed a Pickands 
dependence measure, since we have
$$
\int_\mathbb{I} x \mathrm{d}\vartheta_A(x) = \int_\mathbb{I} (1-F(t)) \mathrm{d}\lambda(t) = 1 - \frac{1}{2}\int_\mathbb{I}D^+A(t) \mathrm{d}\lambda(t) - \frac{1}{2} = \frac{1}{2}.
$$
It is left to show that $\vartheta_A$ fulfills equation \eqref{eq:gamma}. Indeed, we obtain that
$$
\Upsilon(\vartheta_A)(t) = 1-t + 2\int_{[0,t]}\vartheta_A([0,z]) \mathrm{d}\lambda(z) = 1 + \int_{[0,t]}D^+A(z)\mathrm{d}\lambda(z) = A(t).
$$
Thus, $\Upsilon$ is a bijection and applying Lemma \ref{lem:equiv_conv_meas_evc} immediately yields that $\Upsilon$ is a homeomorphism. The fact that $\Phi$ is a homeomorphism is an immediate consequence of Lemma \ref{lem:equiv_conv_meas_evc}.
Since compositions of homeomorphisms are homeomorphisms the proof is complete.
\end{proof}
\begin{Rem}
\emph{Applying \cite[Theorem 5.1]{bernoulli}, $\Phi$ is even a homeomorphism, if we equip $\mathcal{C}_{ev}$ 
with the metric $D_1$ (or any of the metrics $D_p$).}
\end{Rem}
Given  $A \in \mathcal{A}$ define the map $G_A\colon \mathbb{I} \rightarrow \mathbb{I}$ by 
\begin{equation}\label{GA}
G_A(t) := A(t) + D^+A(t)(1-t)
\end{equation}
$t \in [0,1)$ as well as $G_A(1) := 1$. Then applying \cite[Lemma 5]{evc-mass} $G_A$ is 
non-negative, right-continuous, and non-decreasing. Hence working with $G_A$ and considering conditional distributions of an 
EVC $C_A \in \mathcal{C}_{ev}$ with corresponding Pickands dependence function $A \in \mathcal{A}$, according to \cite{evc-mass} a version of the Markov-kernel of $C_A$ is given by
\begin{align}\label{eq:ev_markov_kernel}
    K_A(x,[0,y]) := \begin{cases}
        1,& \text{ if } x \in \{0,1\}\\
        \frac{C_A(x,y)}{x}G_A\left(\frac{\log(x)}{\log(xy)}\right), &\text{ if } x,y \in (0,1)\\
        y,&\text{ if } (x,y) \in (0,1)\times \{0,1\}
    \end{cases}
\end{align}
for $x,y \in \mathbb{I}$.
Again following \cite{evc-mass}, define $f^t(x) := x^{\frac{1}{t}-1}$ for $t \in (0,1)$ and $x \in \mathbb{I}$, 
and, for a given Pickands dependence function $A \in \mathcal{A}$, set
\begin{equation}\label{eq:L and R Pickands}
L := \max\{x \in \mathbb{I} \colon A(x) = 1-x\}, \,\,\, R := \min\{x \in \mathbb{I} \colon A(x) = x\}
\end{equation}

According to \cite{evc-mass} the discrete component of $C_A$ is non-degenerated if, and only if there exists some 
$t \in (0,1)$ such that $\mu_{C_A}(\Gamma(f^t)) > 0$.
Furthermore the discrete component is fully determined by the discontinuity points of the right-hand derivative $D^+A$. 
Working with equation \eqref{eq:gamma} it is straightforward to prove that the right-hand derivative $D^+A$ of a 
Pickands dependence function $A \in \mathcal{A}$ can easily be expressed in terms of the corresponding Pickands dependence measure $\vartheta \in \mathcal{P}_\mathcal{A}$ - the following result holds:
\begin{Lemma}\label{lem:der_pick_meas}
    Let $\vartheta \in \mathcal{P}_\mathcal{A}$ and $A \in \mathcal{A}$ denote the corresponding Pickands dependence function according to equation \eqref{eq:gamma}. Then
    \begin{equation}\label{eq:der_pick_meas}
        D^+A(t) = 2\vartheta([0,t])-1
    \end{equation}
   holds for every $t \in [0,1)$.
\end{Lemma}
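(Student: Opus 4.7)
The plan is to reduce the statement to the elementary right-differentiability of an integral of a right-continuous, non-decreasing function. First I would introduce the distribution function $F(z) := \vartheta([0,z])$, which, since $\vartheta$ is a probability measure on $\mathcal{B}(\mathbb{I})$, is non-decreasing and right-continuous on $[0,1]$. Using this notation, equation \eqref{eq:gamma} rewrites as
$$A(t) = 1 - t + 2\int_{[0,t]} F(z)\,\mathrm{d}\lambda(z), \quad t \in [0,1].$$
The goal is then to compute the right-hand derivative of the integral term and show it equals $F(t)$.

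Fix $t \in [0,1)$ and $h > 0$ with $t+h \leq 1$. The right-hand difference quotient becomes
$$\frac{A(t+h) - A(t)}{h} = -1 + \frac{2}{h}\int_{[t,t+h]} F(z)\,\mathrm{d}\lambda(z).$$
Since $F$ is non-decreasing, for all $z \in [t,t+h]$ we have $F(t) \leq F(z) \leq F(t+h)$, which gives the sandwich
$$F(t) \leq \frac{1}{h}\int_{[t,t+h]} F(z)\,\mathrm{d}\lambda(z) \leq F(t+h).$$

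Letting $h \downarrow 0$ and invoking right-continuity of $F$ at $t$, the upper bound $F(t+h)$ tends to $F(t)$, so the squeeze theorem yields $\lim_{h \downarrow 0} \frac{1}{h}\int_{[t,t+h]} F(z)\,\mathrm{d}\lambda(z) = F(t)$. Substituting this into the difference quotient produces
$$D^+A(t) = -1 + 2F(t) = 2\vartheta([0,t]) - 1,$$
which is precisely equation \eqref{eq:der_pick_meas}. I do not anticipate any real obstacle here; the entire argument hinges on the mild observation that the distribution function of $\vartheta$ is right-continuous, which is standard for probability measures on $\mathbb{R}$, and on the elementary monotone squeeze above rather than on Lebesgue's differentiation theorem, since at this level of regularity the pointwise identity at every $t \in [0,1)$ (not merely $\lambda$-a.e.) is genuinely needed.
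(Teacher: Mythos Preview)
Your argument is correct and in fact slightly cleaner than the paper's. The paper first applies Lebesgue's differentiation theorem to obtain $D^+A(t)=2\vartheta([0,t])-1$ only at continuity points of $F_\vartheta$, and then extends the identity to all of $[0,1)$ by observing that both $D^+A$ and $F_\vartheta$ are right-continuous and that the continuity points of $F_\vartheta$ are dense in $\mathbb{I}$. Your monotone sandwich avoids this two-step detour: by bounding the integral average between $F(t)$ and $F(t+h)$ and invoking right-continuity of $F$ directly, you obtain the identity at \emph{every} $t\in[0,1)$ in one stroke. The trade-off is minimal---your route is more elementary and self-contained, while the paper's route reflects the common pattern of first establishing an identity $\lambda$-a.e.\ and then upgrading it pointwise via regularity of both sides; your closing remark about why Lebesgue's differentiation theorem alone would not suffice here is exactly on point.
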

\begin{proof}
Defining $F_\vartheta(t) := \vartheta([0,t])$ for $t \in \mathbb{I}$ and fixing an 
arbitrary continuity point $t$ of $F_\vartheta$, applying equation \eqref{eq:gamma} and Lebesgue's differentiation theorem (see \cite{rudin1974}) we obtain that
$$
D^+A(t) = \lim_{h \downarrow 0}2\int_{[t,t+h]}\vartheta([0,z]) \mathrm{d}\lambda(z) - 1 = 2\vartheta([0,t]) - 1.
$$
Using the fact that the set $\mathrm{Cont}(F_\vartheta)$ of all continuity points of $F_\vartheta$ 
is dense in $\mathbb{I}$ and considering that both $D^+A$ and $F_\vartheta$ are right-continuous yields the result.
\end{proof}
The following Theorem shows that $\Gamma(f^t)$ with $t \in (0,1)$ carries mass if, and only if, $t$ is a point mass 
of $\vartheta$.
\begin{theorem}\label{lem:mass_graph}
    Let $C \in \mathcal{C}_{ev}$ be an EVC with associated Pickands dependence 
    measure $\vartheta \in \mathcal{P}_\mathcal{A}$. Then the following identity holds for all $t \in (0,1)$:
    \begin{equation}\label{eq:mass_graph}
    \mu_{C}(\Gamma(f^t)) =  \frac{2t(1-t)}{A(t)}\vartheta(\{t\}).
    \end{equation}
    Consequently, $\vartheta$ has a point mass if, and only if $\mu_C^{dis}(\mathbb{I}^2) > 0$.
\end{theorem}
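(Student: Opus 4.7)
The plan is to compute $\mu_{C}(\Gamma(f^{t}))$ for fixed $t \in (0,1)$ via disintegration and the explicit Markov kernel formula \eqref{eq:ev_markov_kernel}. Since the $x$-section of $\Gamma(f^{t})$ is the singleton $\{f^{t}(x)\}$, disintegration gives
\begin{equation*}
\mu_{C}(\Gamma(f^{t})) \;=\; \int_{(0,1)} K_{A}\bigl(x,\{f^{t}(x)\}\bigr)\, \mathrm{d}\lambda(x),
\end{equation*}
and the integrand is the size of the jump of the sub-distribution function $y \mapsto K_{A}(x,[0,y])$ at $y_{0} := f^{t}(x)$. So I would aim to identify this jump pointwise in $x$, then carry out a single one-dimensional integration.

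For fixed $x \in (0,1)$ the map $y \mapsto s(y) := \log(x)/\log(xy)$ is a strictly increasing homeomorphism from $(0,1)$ onto $(0,1)$, and a direct computation gives $s(f^{t}(x))=t$. Since $C_{A}$ is Lipschitz continuous, the factor $C_{A}(x,\cdot)/x$ in \eqref{eq:ev_markov_kernel} contributes no jump at $y_{0}$, so the jump of $y\mapsto K_{A}(x,[0,y])$ at $y_{0}$ is exactly
\begin{equation*}
\tfrac{C_{A}(x,f^{t}(x))}{x}\,\bigl(G_{A}(t)-G_{A}(t-)\bigr).
\end{equation*}
Using the definition \eqref{GA} of $G_{A}$ together with continuity of $A$ yields $G_{A}(t)-G_{A}(t-)=(1-t)\bigl(D^{+}A(t)-D^{+}A(t-)\bigr)$, and Lemma \ref{lem:der_pick_meas} identifies this jump of $D^{+}A$ with $2\vartheta(\{t\})$. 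Hence
\begin{equation*}
K_{A}\bigl(x,\{f^{t}(x)\}\bigr) \;=\; 2(1-t)\,\vartheta(\{t\})\,\frac{C_{A}(x,f^{t}(x))}{x}.
\end{equation*}

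Plugging $y=x^{1/t-1}$ into \eqref{eq:map_eq_pick_copula} gives $C_{A}(x,f^{t}(x)) = (x^{1/t})^{A(t)} = x^{A(t)/t}$, so the integrand simplifies to $2(1-t)\vartheta(\{t\})\,x^{A(t)/t-1}$. Since $A(t)>0$ for $t \in (0,1)$, the elementary integral $\int_{0}^{1} x^{A(t)/t-1}\,\mathrm{d}\lambda(x) = t/A(t)$ yields the claimed identity. The ``consequently'' statement then follows directly by invoking the characterization recalled from \cite{evc-mass}: $\mu_{C}^{dis}(\mathbb{I}^{2})>0$ holds iff $\mu_{C}(\Gamma(f^{t}))>0$ for some $t \in (0,1)$, and by \eqref{eq:mass_graph} this in turn is equivalent to $\vartheta$ having a point mass in $(0,1)$.

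The main obstacle is the jump identification step, i.e.\ making rigorous that the only discontinuity contribution to $y\mapsto K_{A}(x,[0,y])$ at $y_{0}=f^{t}(x)$ comes from the jump of $G_{A}$ at $t$, and that the latter equals $2(1-t)\vartheta(\{t\})$. This rests on two ingredients that must be stated explicitly: continuity of $A$ (which forces jumps of $G_{A}$ to come only from jumps of $D^{+}A$), and the monotone homeomorphism $y\mapsto s(y)$ placing discontinuity points of the composition in bijection with those of $G_{A}$. Once this is settled, the rest is a clean computation.
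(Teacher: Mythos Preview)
Your proof is correct. The paper's own proof is much terser: it simply observes, via Lemma~\ref{lem:der_pick_meas}, that point masses of $\vartheta$ in $(0,1)$ correspond exactly to discontinuity points of $D^{+}A$, and then invokes \cite[Lemma~4]{evc-mass} as a black box to obtain both the identity \eqref{eq:mass_graph} and the equivalence with $\mu_{C}^{dis}(\mathbb{I}^{2})>0$.

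What you do instead is reprove the content of \cite[Lemma~4]{evc-mass} from first principles: you carry out the disintegration, identify the jump of $y\mapsto K_{A}(x,[0,y])$ at $f^{t}(x)$ explicitly via the factorization \eqref{eq:ev_markov_kernel}, compute $C_{A}(x,f^{t}(x))=x^{A(t)/t}$, and integrate. This is a genuinely more self-contained argument---it makes the paper independent of the cited lemma at this point and exposes exactly where each factor in the formula $\tfrac{2t(1-t)}{A(t)}\vartheta(\{t\})$ comes from (the $2$ from Lemma~\ref{lem:der_pick_meas}, the $1-t$ from the definition of $G_{A}$, the $t/A(t)$ from the elementary integral). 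The paper's approach is shorter precisely because that computation has already been done elsewhere; yours buys transparency at the cost of a few extra lines. Your treatment of the ``consequently'' clause is also slightly more careful than the paper's, since you correctly localize the point-mass condition to $(0,1)$.
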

\begin{proof}
    Applying Lemma \ref{lem:der_pick_meas} it follows that $\vartheta$ has a point mass $t \in (0,1)$ if, and only if 
    $t \in (0,1)$ is a point of discontinuity of $D^+A$. The assertion now follows by applying \cite[Lemma 4]{evc-mass}. Equation \eqref{eq:mass_graph} is an immediate consequence of \cite[Lemma 4]{evc-mass} and Lemma \ref{lem:der_pick_meas}.
\end{proof}
\subsection{Mass distributions of Extreme Value copulas}
In what follows we characterize the existence of a non-degenerated absolutely continuous, discrete, or singular component 
of an EVC in terms of the corresponding Pickands dependence measures. We do not consider purely discrete or 
singular Pickands dependence measures since, as proved already in \cite[Corollary 5]{evc-mass}, every EVC (except from $M$) has a non-degenerated absolutely continuous component and thus, purely discrete and singular EVC do not exist.
We start with the following example illustrating why working with $(0,1)$ instead of $\mathbb{I}$ can't be avoided.
\begin{Ex}
Consider the Pickands dependence measure $\vartheta_1=\frac{1}{2}(\delta_{0} + \delta_{1})$. 
As mentioned before, $\vartheta_1$ induces the product copula $\Pi$, i.e., even though the EVC copula is absolutely
continuous, the Pickands dependence measure is not. It is straightforward to see that $\Pi$ is not the only 
absolutely continuous EVC whose associated Pickands dependence measure has a non-degenerated discrete component. 
In fact, letting $\vartheta_2$ be an arbitrary absolutely continuous measure in $\mathcal{P}_\mathcal{A}$ 
and defining $\vartheta := \frac{\vartheta_1 + \vartheta_2}{2} \in \mathcal{P}_\mathcal{A}$, then 
$\vartheta$ is a probability measure with non-degenerated discrete component which induces an absolutely continuous EVC.
\end{Ex}
We now characterize the existence of non-degenerated discrete/singular components of $C_A$ in terms of the Lebesgue decomposition of the Pickands dependence measure $\vartheta$ (the quite technical proof as well as 
some preliminary lemmas can be found in the Appendix).
\begin{theorem}\label{thm:evc_regularity_meas_cop}
Let $C \in \mathcal{C}_{ev}$ be an EVC and let $\vartheta \in \mathcal{P}_\mathcal{A}$ and 
$A \in \mathcal{A}$ denote the corresponding Pickands dependence measure and Pickands dependence function, respectively. 
Then the following equivalences hold:
\begin{itemize}
    \item[(i)] $\vartheta$ is absolutely continuous on $(0,1)$ if, and only if $C$ is absolutely continuous.
    \item[(ii)] $\vartheta$ has a point mass in $(0,1)$ if, and only if $C$ has non-degenerated discrete component.
    \item[(iii)] $\vartheta$ has a non-degenerated singular component if, and only if $C$ has non-degenerated singular component.
\end{itemize}
\end{theorem}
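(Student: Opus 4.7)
The plan is to transfer the Lebesgue decomposition of $\vartheta$ restricted to $(0,1)$ through Lemma \ref{lem:der_pick_meas}, the identity $G_A(t) = A(t) + D^+A(t)(1-t)$ from \eqref{GA}, and the Markov kernel representation \eqref{eq:ev_markov_kernel}, into a corresponding Lebesgue decomposition of $\mu_C$. By Lemma \ref{lem:der_pick_meas}, the distribution function of $\vartheta$ on $[0,1)$ equals $(D^+A+1)/2$, so the Lebesgue decomposition of the restriction $\vartheta|_{(0,1)}$ into absolutely continuous, discrete and singular continuous parts corresponds exactly to the standard decomposition of the monotone right-continuous function $D^+A$ on $(0,1)$. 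Since $A$ is convex on $[0,1]$ with $A(0)=1$ and $A(t) = 1 + \int_{[0,t]}D^+A(s)\,\mathrm{d}\lambda(s)$, the function $A$ is absolutely continuous on $[0,1]$, and hence $G_A$ inherits precisely the Lebesgue decomposition of $D^+A$ on $(0,1)$.

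Next, for each fixed $x \in (0,1)$, the map $y \mapsto u(x,y) := \log(x)/\log(xy)$ is a smooth monotone bijection of $(0,1)$ onto itself. Rewriting \eqref{eq:ev_markov_kernel} as $K_A(x,[0,y]) = (C_A(x,y)/x)\,G_A(u(x,y))$, the first factor is absolutely continuous in $y$ on $(0,1)$, while the second factor is the composition of $G_A$ with a $C^\infty$ diffeomorphism. Hence the measure $K_A(x,\cdot)$ on $(0,1)$ admits a Lebesgue decomposition whose three parts are in one-to-one correspondence with those of $\vartheta|_{(0,1)}$ via push-forward under $u(x,\cdot)^{-1}$. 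Integrating over $x$ via the disintegration formula \eqref{eq:DI} then transports this correspondence to the Lebesgue decomposition of $\mu_C$.

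With this machinery in place, the three equivalences fall out. Part (ii) is essentially the last sentence of Theorem \ref{lem:mass_graph}: for $t \in (0,1)$, the atoms of $\vartheta$ correspond bijectively to the graphs $\Gamma(f^t)$ charged by $\mu_C$, which together constitute $\mu_C^{dis}$. For the forward direction of (i), absolute continuity of $\vartheta|_{(0,1)}$ forces $D^+A$ and $G_A$ to be absolutely continuous on $(0,1)$, making $y \mapsto K_A(x,[0,y])$ absolutely continuous for each $x \in (0,1)$, and hence $C$ absolutely continuous by disintegration; the converse follows from parts (ii) and (iii). For the forward direction of (iii), if $\vartheta|_{(0,1)}$ has non-degenerated singular continuous component concentrated on a $\lambda$-null Borel set $S \subset (0,1)$, then $N := \{(x,y) \in (0,1)^2 : u(x,y) \in S\}$ is a Borel set satisfying $\lambda_2(N) = 0$ by Fubini (each fibre being $u(x,\cdot)^{-1}(S)$), and $\mu_C(N) > 0$ by disintegration applied to the singular continuous part of $K_A(x,\cdot)$.

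The main technical obstacle is the quantitative step in the forward direction of (iii): one must show not just that some $\mu_C$-mass sits on a $\lambda_2$-null set, but that this mass is genuinely singular continuous, i.e., not absorbed into $\mu_C^{dis}$ arising from the atomic part of $\vartheta$ nor into $\mu_C^{abs}$. This requires a careful isolation of the singular continuous component $G_A^{sing}$ of $G_A$ (disentangled from any jumps of $D^+A$), an explicit push-forward computation under the diffeomorphisms $u(x,\cdot)^{-1}$ yielding a measurable selection of singular sets, and uniform integrability bounds in $x$ so that disintegration delivers a strictly positive total mass. These intricate estimates, together with the analogous argument for the converse of (iii) that excludes phantom singular components, are the reason the full proof and its auxiliary lemmas are relegated to the Appendix.
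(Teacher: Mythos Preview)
Your overall strategy coincides with the paper's: translate the Lebesgue decomposition of $\vartheta$ through $D^+A$ and $G_A$ into the Markov kernel $K_A(x,\cdot)$ via the diffeomorphism $\varphi_x(y)=\log(x)/\log(xy)$, then disintegrate. Part (ii) is indeed Theorem~\ref{lem:mass_graph}, and your forward direction of (i) matches the paper exactly; your reduction of the converse of (i) to (ii) and (iii) is a legitimate shortcut (the paper instead argues it directly).

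The one place where your sketch is genuinely too quick is the sentence beginning ``Hence the measure $K_A(x,\cdot)$ \ldots\ admits a Lebesgue decomposition whose three parts are in one-to-one correspondence with those of $\vartheta|_{(0,1)}$ via push-forward.'' Composition with the diffeomorphism $\varphi_x$ does preserve the three components, but the subsequent \emph{multiplication} by the strictly increasing absolutely continuous factor $C_A(x,y)/x$ does not obviously do so: the product of an absolutely continuous distribution function with a singular (respectively discrete) measure-generating function is not automatically ``singular plus absolutely continuous'' (respectively ``discrete plus absolutely continuous''). This is precisely what the paper isolates as Lemma~\ref{lem:help_regularity_measure}: for $H=F\cdot G$ with $F$ strictly increasing and absolutely continuous, (i) $G$ singular forces $\mu_H$ to have non-degenerated singular part, (ii) $G$ discrete on $(0,1)$ forces $\mu_H^{sing}=0$, and (iii) point masses of $G$ in $(0,1)$ survive in $\mu_H$. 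You correctly flag the forward direction of (iii) as the delicate spot, but the actual resolution is this product lemma together with the decomposition $G_A=G_A^{sing}+G_A^\mu$ obtained from Lemmas~\ref{lem:conv_semi-pickands} and~\ref{lem:conv_G}, rather than the measurable-selection/uniform-integrability programme you outline. Once Lemma~\ref{lem:help_regularity_measure} is available, both directions of (iii) and the paper's direct converse of (i) follow without further estimates.
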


The following example illustrates the previous result and considers a Pickands dependence measure 
with non-degenerated discrete and absolutely continuous component.
\begin{Ex}\label{ex:dis_and_abs_pickands}
    Define the distribution function $F_\vartheta$ (see Figure \ref{fig:dis_abs_meas_pickands}) by
    $$
    F_\vartheta(t) := \begin{cases}
      t,& \text{ if } t \in [0,\frac{1}{2})\\
      \frac{3}{5},& \text{ if } t \in [\frac{1}{2},\frac{3}{4})\\
      \frac{16}{35}t + \frac{1}{2} ,& \text{ if } t \in [\frac{3}{4},1)\\
    1 ,& \text{ if } t = 1
    \end{cases}
    $$
    for $t \in \mathbb{I}$. It is straightforward to verify that $F_\vartheta$ corresponds to a unique Pickands dependence measure $\vartheta \in \mathcal{P}_\mathcal{A}$, which has both a non-degenerated discrete and 
    absolutely continuous component. According to equation \eqref{eq:gamma} the associated Pickands dependence function $A$ is given by (again see Figure \ref{fig:dis_abs_meas_pickands})
    $$
    A(t) =\begin{cases}
      t^2-t+1,& \text{ if } t \in [0,\frac{1}{2})\\
      \frac{1}{5}t + \frac{13}{20},& \text{ if } t \in [\frac{1}{2},\frac{3}{4})\\
      \frac{16}{35}t^2 + \frac{1}{2} ,& \text{ if } t \in [\frac{3}{4},1].
    \end{cases}
    $$
    Applying Theorem \ref{thm:evc_regularity_meas_cop}, both the discrete and absolutely continuous component of $\vartheta$ propagate to the induced copula $C_A$. The discrete component is concentrated on the graphs of the functions $f^\frac{1}{2}(x) = x$ and $f^\frac{1}{3}(x) = \sqrt[3]{x}$, respectively. A sample of the copula $C_A$ is depicted in Figure \ref{fig:discrete_abs_pickands_sample}.
\end{Ex}
\begin{figure}[!ht]
	\centering
	\includegraphics[width=1\textwidth]{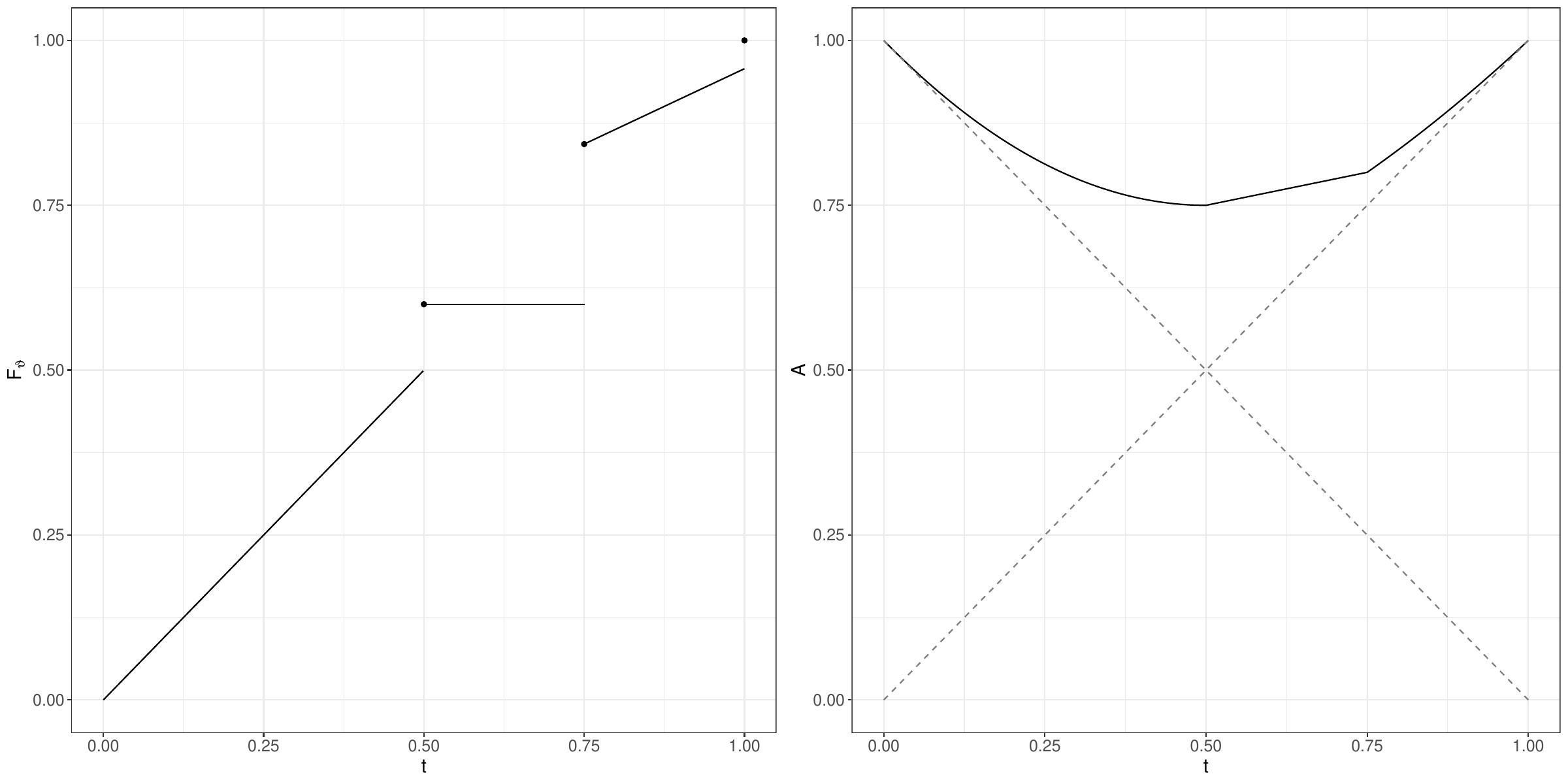}
	\caption{Graphs of the distribution function $F_\vartheta$ of the Pickands dependence measure $\vartheta$ (left) according to Example \ref{ex:dis_and_abs_pickands} and associated Pickands dependence function $A$ (right) associated with it.}
    \label{fig:dis_abs_meas_pickands}
    \end{figure}
    \begin{figure}[!ht]
	\centering
	\includegraphics[width=1.1\textwidth]{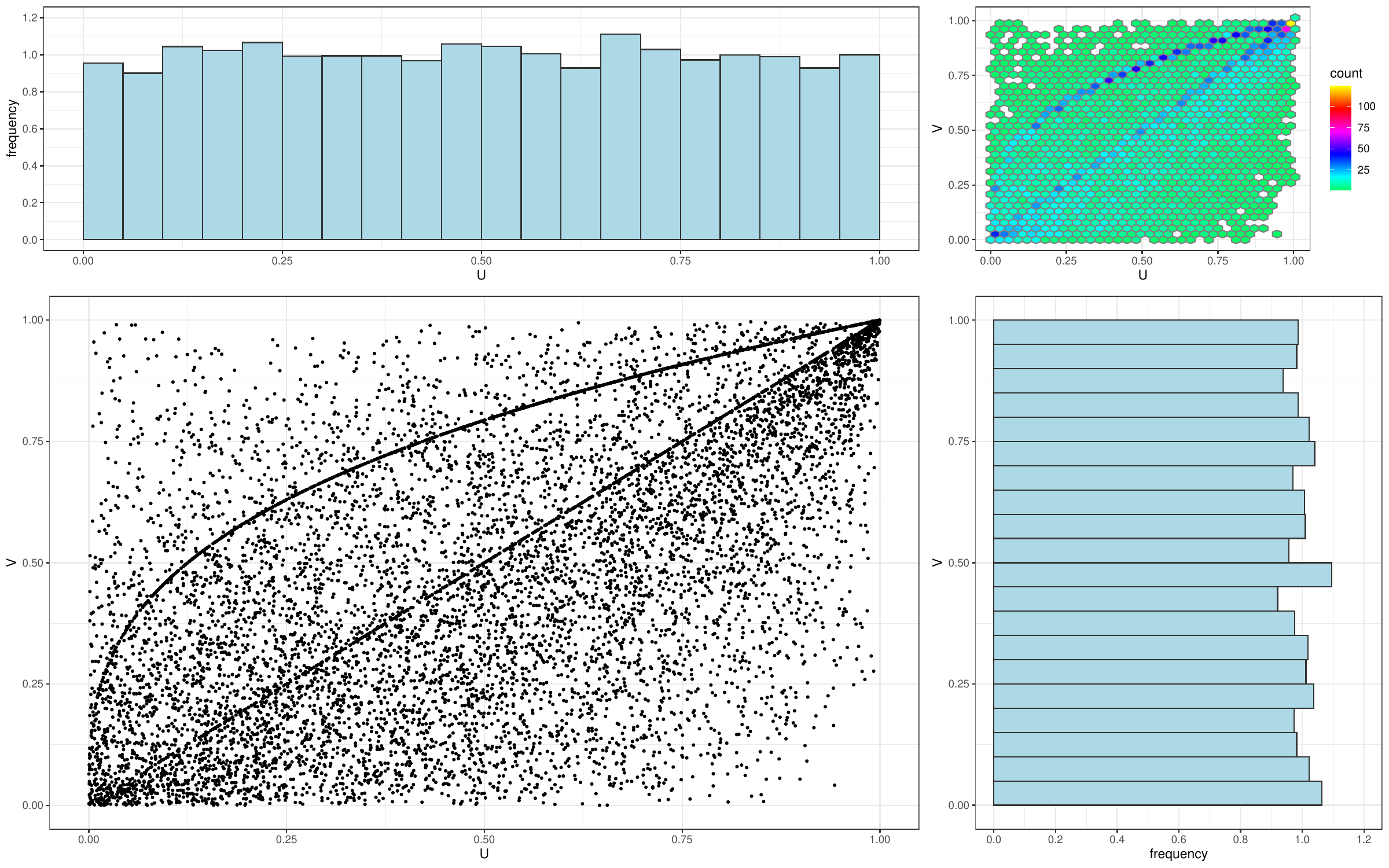}
	\caption{Sample of size 10000 of the EVC $C_A$, where $A$ is the Pickands dependence function according to Example \ref{ex:dis_and_abs_pickands}, its histogram and the two marginal histograms. The sample has been generated via conditional inverse sampling.}
\label{fig:discrete_abs_pickands_sample}
\end{figure}
We have already shown that many regularity/singularity properties of the Pickands dependence measure carry over to the corresponding EVC. As next step we show that the support is no exception:
\begin{Lemma}\label{lem:support_measure_copula}
Let $C \in \mathcal{C}_{ev}$ and $\vartheta \in \mathcal{P}_\mathcal{A}$ be its associated Pickands dependence measure. Then the following assertions hold:
\begin{itemize}
    \item[(i)] If $\vartheta^{dis}$ has full support, then $\mathrm{supp}(\mu_C^{dis}) = \mathbb{I}^2$.
    \item[(ii)] If $\vartheta^{sing}$ has full support, then $\mathrm{supp}(\mu_C^{sing}) = \mathbb{I}^2$.
    \item[(iii)] If $\vartheta$ fulfills $L = 0$ and $R = 1$, then $\mathrm{supp}(\mu_C^{abs}) = \mathbb{I}^2$.
\end{itemize}
\end{Lemma}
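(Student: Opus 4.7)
My plan is to exploit the explicit Markov kernel formula~\eqref{eq:ev_markov_kernel} combined with the fact that, for each fixed $x \in (0,1)$, the map $y \mapsto t(x,y) := \tfrac{\log x}{\log(xy)}$ is a strictly increasing homeomorphism of $(0,1)$ onto itself, with inverse $t \mapsto x^{1/t-1}$. Since every non-empty open subset of $\mathbb{I}^2$ meets $(0,1)^2$, it suffices, in each of the three parts, to show that $\mu_C^{\star}(R) > 0$ for every open rectangle $R = (a,b) \times (c,d) \subseteq (0,1)^2$, with $\star \in \{dis, sing, abs\}$ respectively.

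\textbf{Part (i).} I would first observe that $\{t \in (0,1) : \Gamma(f^t) \cap R \neq \emptyset\}$ is the image of $R$ under the smooth submersion $(x,y) \mapsto t(x,y)$ and hence a non-empty open subset of $(0,1)$. Since $\vartheta^{dis}$ has full support, the atoms of $\vartheta$ are dense in $\mathbb{I}$, so this open set contains some atom $t_0$. By Lemma~\ref{lem:der_pick_meas}, $D^+A$ jumps by $2\vartheta(\{t_0\})$ at $t_0$; therefore $G_A$ jumps by $2(1-t_0)\vartheta(\{t_0\}) > 0$, and formula~\eqref{eq:ev_markov_kernel} yields the strictly positive conditional point mass
$$K_A(x,\{f^{t_0}(x)\}) \,=\, \tfrac{C_A(x, f^{t_0}(x))}{x}\cdot 2(1-t_0)\vartheta(\{t_0\}) \,>\, 0$$
for every $x \in (0,1)$. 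Disintegration~\eqref{eq:DI} over the non-empty open subset $\{x \in (a,b) : f^{t_0}(x) \in (c,d)\}$ of $(a,b)$ then delivers $\mu_C^{dis}(R) \geq \mu_C(\Gamma(f^{t_0}) \cap R) > 0$.

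\textbf{Parts (ii) and (iii).} For fixed $x \in (0,1)$ I would write $K_A(x,[0,y]) = \phi(y)\psi(y)$ with $\phi(y) := C_A(x,y)/x$ smooth and strictly positive on $(0,1)$ and $\psi(y) := G_A(t(x,y))$, and apply the Stieltjes product rule
$$dK_A(x,\cdot)(y) \,=\, \phi'(y)\psi(y)\, d\lambda(y) \,+\, \phi(y)\, d\psi(y).$$
Because $dG_A(t) = 2(1-t)\, d\vartheta(t)$ on $[0,1)$, transport through the smooth bijection $t \leftrightarrow y$ identifies the singular component of $d\psi$ (and, since $\phi > 0$, of $dK_A(x,\cdot)$) with a strictly positive multiple of the image of $\vartheta^{sing}$ under $t \mapsto x^{1/t-1}$. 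Under the hypothesis of (ii) this image equals $\mathbb{I}$ for every $x \in (0,1)$, so $K_A^{sing}(x,\cdot)$ has full support and disintegration gives $\mu_C^{sing}(R) > 0$. For (iii), the summand $\phi'(y)\psi(y)\, d\lambda(y)$ is always absolutely continuous, and a direct computation based on $\partial_y C_A = \tfrac{C_A}{y}H_A(t)$ with $H_A(t) := A(t) - tD^+A(t)$ gives
$$\phi'(y)\psi(y) \,=\, \tfrac{C_A(x,y)}{xy}\, H_A(t(x,y))\, G_A(t(x,y)).$$
Convexity of $A$ together with the definitions in~\eqref{eq:L and R Pickands} and the bounds $-1 \leq D^+A \leq 1$ yields $G_A(t) > 0$ on $(L,1]$ and $H_A(t) > 0$ on $[0,R)$, so under $L=0$, $R=1$ the product $H_A G_A$ is strictly positive on $(0,1)$. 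Disintegration then produces a strictly positive density for $\mu_C^{abs}$ on $(0,1)^2$, and hence $\mathrm{supp}(\mu_C^{abs}) = \mathbb{I}^2$.

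\textbf{Expected main obstacle.} The delicate point in (ii) and (iii) is the product-rule identity $d(\phi\psi) = \phi'\psi\, d\lambda + \phi\, d\psi$: one must check that no cancellation between Lebesgue components occurs, so that $\phi'\psi\, d\lambda$ truly contributes to the absolutely continuous part and $\phi\, d\psi^{sing}$ genuinely to the singular one. Smoothness and strict positivity of $\phi$ on $(0,1)$ guarantee that multiplying $d\psi^{abs}$, $d\psi^{sing}$, $d\psi^{dis}$ by $\phi$ preserves their respective Lebesgue types, and that the auxiliary $\phi'\psi\, d\lambda$ can only add to the absolutely continuous component. A careful spelling out of this three-fold Lebesgue decomposition of $dK_A(x,\cdot)$, in the spirit of standard $BV$-theory, is what the argument ultimately rests on.
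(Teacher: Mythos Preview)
Your argument is correct. Part (i) matches the paper's proof closely: both identify the atoms of $K_C^{dis}(x,\cdot)$ as the images $f^{q_i}(x)$ of the atoms $q_i$ of $\vartheta$ and use density of these atoms in $\mathbb{I}$ to conclude.

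For (ii) and (iii) you take a more explicit and self-contained route than the paper. The paper handles (ii) by decomposing $G_A = G_A^{sing} + G_A^\mu$ into the pieces induced by $\vartheta^{sing}$ and its complement (machinery set up in the proof of Theorem~\ref{thm:evc_regularity_meas_cop}), observing that $y \mapsto G_A^{sing}(\varphi_x(y))$ is continuous, singular and strictly increasing, and then invoking Lemma~\ref{lem:help_regularity_measure} for the product with the absolutely continuous strictly increasing factor $C(x,y)/x$; for (iii) the paper simply cites \cite[Corollary~5]{evc-mass}. Your identity $dG_A(t) = 2(1-t)\,d\vartheta(t)$ combined with the Stieltjes product rule for $d(\phi\psi)$ reproves both facts in one stroke and exhibits the strictly positive density $\tfrac{C_A(x,y)}{xy}H_A(t)G_A(t)$ on $(0,1)^2$ directly, at the cost of the BV bookkeeping you flag in your obstacle paragraph. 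One minor point: $\phi(y) = C_A(x,y)/x$ is not literally smooth in general (since $A$ need only be convex, not $C^1$), but it is absolutely continuous with a.e.\ derivative $\phi'$, and this is all the product rule and the Lebesgue-type separation require, so the argument goes through unchanged.
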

\begin{proof}
Suppose that $\vartheta \in \mathcal{P}_\mathcal{A}$ 
is such that $\vartheta^{dis}$ has full support. Then there exist countably infinite sets
 $\{q_1,q_2,...\} \subseteq \mathbb{I}$ and $\{\alpha_1,\alpha_2,...\} \subseteq (0,1)$ with $\sum_{i \in \mathbb{N}} \alpha_i = 1$ such that $\vartheta^{dis} = \sum_{i \in \mathbb{N}} \alpha_i\delta_{q_i}$. Fixing $x \in (0,1)$, applying Theorem \ref{lem:mass_graph} yields $K_A(x,\{f_t(x)\}) > 0$ if, and only if $t = q_i$ for some $i \in \mathbb{N}$. 
Having this it follows immediately that $\textrm{supp}(K_C^{dis}(x,\cdot)) = \mathbb{I}$. 
Since $x \in (0,1)$ was arbitrary, applying disintegration yields $\mathrm{supp}(\mu_C^{dis}) = \mathbb{I}^2$.\\
To prove the second assertion we may proceed as follows: The function $y \mapsto G_A^{sing}\left(\frac{\log(x)}{\log(xy)}\right)$ is continuous, singular and strictly increasing. On the other hand, the function $y \mapsto C(x,y)/x$ is absolutely continuous and strictly increasing so, applying the same arguments as in Lemma \ref{lem:help_regularity_measure}, it follows that $K_C^{sign}(x,\cdot)$ has support $\mathbb{I}$ and thus, disintegration yields the desired statement.\\
The last assertion is a direct consequence of \cite[Corollary 5]{evc-mass}.
\end{proof}
Finally, we prove that the family of EVCs $C \in \mathcal{C}_{ev}$ with $\mathrm{supp}(\mu_C^{dis}) = \mathbb{I}^2$, the family fulfilling $\mathrm{supp}(\mu_C^{sing}) = \mathbb{I}^2$ and the family with  $\mathrm{supp}(\mu_C^{abs}) = \mathbb{I}^2$ is dense in $\mathcal{C}_{ev}$.
\begin{theorem}\label{thm:evc_density_result}
The following assertions hold:
\begin{itemize}
    \item[(i)] The family $\{C\in \mathcal{C}_{ev}\colon\mathrm{supp}(\mu_C^{dis})=\mathbb{I}^2\}$ is dense in $(\mathcal{C}_{ev},d_\infty)$.
    \item[(ii)] The family $\{C\in \mathcal{C}_{ev}\colon\mathrm{supp}(\mu_C^{sing})=\mathbb{I}^2\}$ is dense in $(\mathcal{C}_{ev},d_\infty)$.
    \item[(iii)] The family $\{C\in \mathcal{C}_{ev}\colon\mathrm{supp}(\mu_C^{abs})=\mathbb{I}^2\}$ is dense in $(\mathcal{C}_{ev},d_\infty)$.
\end{itemize}
\end{theorem}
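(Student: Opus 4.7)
The plan is to lift the density question from $(\mathcal{C}_{ev},d_\infty)$ to $(\mathcal{P}_\mathcal{A},\tau_w)$ via the homeomorphism $\Phi\circ\Upsilon$ from Lemma \ref{lem:extreme_homeom}, and then to approximate an arbitrary $\vartheta \in \mathcal{P}_\mathcal{A}$ by convex combinations $\vartheta_n := (1-\tfrac{1}{n})\vartheta + \tfrac{1}{n}\eta$, where $\eta \in \mathcal{P}_\mathcal{A}$ is a suitably chosen seed measure that already realizes the desired property. The mean constraint $\int x\,\mathrm{d}\vartheta = \tfrac{1}{2}$ defining $\mathcal{P}_\mathcal{A}$ is preserved under convex combinations, so each $\vartheta_n$ lies in $\mathcal{P}_\mathcal{A}$; since obviously $\vartheta_n \to \vartheta$ weakly, Lemma \ref{lem:extreme_homeom} yields $d_\infty(C_n,C)\to 0$ for the associated EVCs $C_n := \Phi\circ\Upsilon(\vartheta_n)$.

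For (i), I would take $\eta := \eta_d := \sum_{k\in\mathbb{N}} 2^{-k}\cdot\tfrac{1}{2}(\delta_{q_k}+\delta_{1-q_k})$, where $(q_k)_{k\in\mathbb{N}}$ is a dense sequence in $\mathbb{I}$; this $\eta_d$ is purely discrete, $T$-symmetric (where $T(x)=1-x$) with mean $\tfrac{1}{2}$, and has $\mathrm{supp}(\eta_d)=\mathbb{I}$. Uniqueness of the Lebesgue decomposition yields $\vartheta_n^{dis} = (1-\tfrac{1}{n})\vartheta^{dis}+\tfrac{1}{n}\eta_d$, which again has full support, so Lemma \ref{lem:support_measure_copula}(i) gives $\mathrm{supp}(\mu_{C_n}^{dis}) = \mathbb{I}^2$. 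For (ii), I would replace $\eta_d$ by $\eta_s := \tfrac{1}{2}(\eta'+{\eta'}^T)$, where $\eta' := \sum_k 2^{-k}\mu_k$ and each $\mu_k$ is a Cantor-type singular continuous probability measure supported on a Cantor set $K_k$ inside a non-degenerate interval $I_k$, with the $K_k$'s chosen so that $\bigcup_k K_k$ is dense in $\mathbb{I}$; the resulting $\eta_s$ is then $T$-symmetric, purely singular continuous and has full support, and Lemma \ref{lem:support_measure_copula}(ii) closes the case. For (iii), the natural seed is $\eta := \vartheta_\Pi = \tfrac{1}{2}(\delta_0+\delta_1)$, whose Pickands function is $A_\Pi\equiv 1$. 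Since $\Upsilon$ is affine (up to the additive term $1-t$), one obtains $A_n = (1-\tfrac{1}{n})A+\tfrac{1}{n}$, and a short computation gives $A_n(t)-(1-t) = (1-\tfrac{1}{n})(A(t)-(1-t))+\tfrac{t}{n} > 0$ for every $t\in(0,1]$, together with the symmetric inequality $A_n(t)-t = (1-\tfrac{1}{n})(A(t)-t)+\tfrac{1-t}{n}>0$ for every $t\in[0,1)$. Hence $L_n=0$ and $R_n=1$ in the sense of equation \eqref{eq:L and R Pickands}, and Lemma \ref{lem:support_measure_copula}(iii) forces $\mathrm{supp}(\mu_{C_n}^{abs}) = \mathbb{I}^2$.

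The only non-routine ingredient is the construction of the seed $\eta_s$ in (ii), i.e., exhibiting a symmetric, purely singular continuous probability measure on $\mathbb{I}$ of full support. The Cantor-superposition plus $T$-symmetrization above handles this, and the remaining verifications (that countable convex combinations of singular continuous measures remain singular continuous, that $\mathrm{supp}(\eta_s)=\mathbb{I}$ follows from density of $\bigcup_k K_k$, and that additivity of the Lebesgue decomposition yields $\vartheta_n^{sing}\geq \tfrac{1}{n}\eta_s$ regardless of the decomposition of $\vartheta$) are standard measure-theoretic checks.
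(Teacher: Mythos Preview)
Your proof is correct and follows the same overall strategy as the paper: transfer the problem to $(\mathcal{P}_\mathcal{A},\tau_w)$ via Lemma~\ref{lem:extreme_homeom}, approximate by convex combinations with a suitable seed measure, and invoke Lemma~\ref{lem:support_measure_copula}. There are two minor but noteworthy differences. First, for (i) and (ii) the paper (via Lemma~\ref{lem:ev-measure-approximaton}) takes an arbitrary full-support discrete/singular seed in $\mathcal{P}(\mathbb{I})$ and then renormalizes the convex combination back into $\mathcal{P}_\mathcal{A}$ using Lemma~\ref{lem:normalizing_pickands}; your choice of $T$-symmetric seeds already in $\mathcal{P}_\mathcal{A}$ sidesteps this normalization step entirely. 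Second, for (iii) the paper does not construct a seed at all but instead deduces the claim from (ii) together with the fact (from \cite[Corollary~5]{evc-mass}) that $\mathrm{supp}(\mu_C)=\mathrm{supp}(\mu_C^{abs})$ for every EVC; your direct computation showing $L_n=0$ and $R_n=1$ for the mixture with $\vartheta_\Pi$ is an equally short, self-contained alternative that avoids citing the external result.
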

\begin{proof}
Combining Lemma \ref{lem:support_measure_copula}, Lemma \ref{lem:equiv_conv_meas_evc} and Lemma \ref{lem:ev-measure-approximaton} yields the first two assertions. The last assertion follows by combining assertion $(ii)$ and the fact that $\mathrm{supp}(\mu_C) = \mathrm{supp}(\mu_C^{abs})$ holds for every $C \in \mathcal{C}_{ev}^2$ (see \cite[Corollary 5]{evc-mass}).
\end{proof}
Working with convex-combinations of Pickands dependence measures we finally obtain the following striking result:
\begin{theorem}\label{thm:evc_density_convex_comb}
The family $\{C\in \mathcal{C}_{ev}\colon \mathrm{supp}(\mu_C^{dis})= \mathrm{supp}(\mu_C^{sing}) = \mathrm{supp}(\mu_C^{abs}) = \mathbb{I}^2\}$ is dense in $(\mathcal{C}_{ev},d_\infty)$.
\end{theorem}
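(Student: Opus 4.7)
The strategy is to pass to the level of Pickands dependence measures via the homeomorphism $\Phi \circ \Upsilon\colon (\mathcal{P}_\mathcal{A}, \tau_w) \to (\mathcal{C}_{ev}, d_\infty)$ established in Lemma \ref{lem:extreme_homeom}, and to perturb an arbitrary Pickands dependence measure $\vartheta$ by a small convex combination with three carefully chosen auxiliary measures, one for each Lebesgue component. First, I would fix once and for all three Pickands dependence measures $\eta_a,\eta_d,\eta_s \in \mathcal{P}_\mathcal{A}$ that are purely absolutely continuous, purely discrete and purely singular, respectively, and each has full support $[0,1]$. Concrete choices are $\eta_a := \lambda|_{\mathbb{I}}$ (mean $1/2$ by symmetry); $\eta_d := \tfrac{1}{2}\sum_{n \in \mathbb{N}} 2^{-n}(\delta_{q_n}+\delta_{1-q_n})$ for an enumeration $(q_n)$ of $\mathbb{Q}\cap\mathbb{I}$; and for $\eta_s$ the symmetrization around $1/2$ of a dense collection of appropriately scaled and translated Cantor measures.

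Given an arbitrary $C \in \mathcal{C}_{ev}$ with associated Pickands dependence measure $\vartheta$, for $n \geq 4$ I would set
\[
\vartheta_n := \left(1 - \tfrac{3}{n}\right)\vartheta + \tfrac{1}{n}\eta_a + \tfrac{1}{n}\eta_d + \tfrac{1}{n}\eta_s.
\]
Since the mean is linear, $\vartheta_n \in \mathcal{P}_\mathcal{A}$; moreover $\vartheta_n \to \vartheta$ in $\tau_w$, so Lemma \ref{lem:extreme_homeom} yields $d_\infty(C_n, C) \to 0$ for the corresponding EVC $C_n := (\Phi \circ \Upsilon)(\vartheta_n)$. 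By uniqueness of the Lebesgue decomposition, the purity of $\eta_a,\eta_d,\eta_s$ yields
\[
\vartheta_n^{abs} = (1-\tfrac{3}{n})\vartheta^{abs} + \tfrac{1}{n}\eta_a, \quad \vartheta_n^{dis} = (1-\tfrac{3}{n})\vartheta^{dis} + \tfrac{1}{n}\eta_d, \quad \vartheta_n^{sing} = (1-\tfrac{3}{n})\vartheta^{sing} + \tfrac{1}{n}\eta_s,
\]
so that both $\vartheta_n^{dis}$ and $\vartheta_n^{sing}$ have support $[0,1]$. Furthermore $\vartheta_n([0,t]) \geq t/n > 0$ and $\vartheta_n([t,1]) \geq (1-t)/n > 0$ for every $t \in (0,1)$, which combined with the identity $D^+A_n(t) = 2\vartheta_n([0,t])-1$ from Lemma \ref{lem:der_pick_meas} forces $L_n = 0$ and $R_n = 1$ for the corresponding Pickands dependence function $A_n := \Upsilon(\vartheta_n)$.

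Applying the three assertions of Lemma \ref{lem:support_measure_copula} then gives $\mathrm{supp}(\mu_{C_n}^{dis}) = \mathrm{supp}(\mu_{C_n}^{sing}) = \mathrm{supp}(\mu_{C_n}^{abs}) = \mathbb{I}^2$, so $C_n$ belongs to the target family and the density claim follows. The only delicate point is the explicit construction of a purely singular auxiliary $\eta_s$ enjoying both mean $1/2$ and full support $[0,1]$; this can however be handled in a standard way by distributing shrunken Cantor measures along a dense sequence of disjoint subintervals of $\mathbb{I}$ with summable weights and then symmetrizing around $1/2$, ensuring the mean constraint while preserving pure singularity and keeping the support equal to $[0,1]$.
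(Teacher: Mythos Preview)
Your proposal is correct and follows precisely the route the paper indicates (the paper only states, without proof, that the result is obtained ``working with convex-combinations of Pickands dependence measures''). The one minor difference is that where the paper, in the closely related Lemma~\ref{lem:ev-measure-approximaton}, perturbs by a single auxiliary measure not assumed to have mean $\tfrac{1}{2}$ and then invokes the normalization Lemma~\ref{lem:normalizing_pickands}, you instead build the three auxiliary measures $\eta_a,\eta_d,\eta_s$ to be symmetric about $\tfrac{1}{2}$ from the outset, so that $\vartheta_n \in \mathcal{P}_\mathcal{A}$ automatically and no normalization step is needed; this is a slight streamlining but not a genuinely different argument.
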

The afore-mentioned results remain valid when working with stronger notions of convergence - using Theorem \ref{thm:evc_density_result} and Theorem \ref{thm:evc_density_convex_comb} in combination with \cite[Theorem 5.1]{bernoulli} yields the following two corollaries:
\begin{Cor}\label{cor:evc_density_result_wcc}
The following assertions hold:
\begin{itemize}
    \item[(i)] The family $\{C\in \mathcal{C}_{ev}\colon\mathrm{supp}(\mu_C^{dis})=\mathbb{I}^2\}$ is dense in $(\mathcal{C}_{ev},D_1)$ and in $\mathcal{C}_{ev}$ w.r.t. wcc.
    \item[(ii)] The family $\{C\in \mathcal{C}_{ev}\colon\mathrm{supp}(\mu_C^{sing})=\mathbb{I}^2\}$ is dense in $(\mathcal{C}_{ev},D_1)$ and in $\mathcal{C}_{ev}$ w.r.t. wcc.
    \item[(iii)] The family $\{C\in \mathcal{C}_{ev}\colon\mathrm{supp}(\mu_C^{abs})=\mathbb{I}^2\}$ is dense in $(\mathcal{C}_{ev},D_1)$ and in $\mathcal{C}_{ev}$ w.r.t. wcc.
\end{itemize}
\end{Cor}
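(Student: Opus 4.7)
The plan is straightforward: the corollary merely upgrades the $d_\infty$-density statements of Theorem~\ref{thm:evc_density_result} to the finer $D_1$-topology and to weak conditional convergence, so the entire argument reduces to showing that on the subspace $\mathcal{C}_{ev}$ these three notions of convergence are compatible. Given an arbitrary target $C \in \mathcal{C}_{ev}$, Theorem~\ref{thm:evc_density_result} already furnishes, for each of the three target families, a sequence $(C_n)_{n\in\mathbb{N}}$ in that family with $d_\infty(C_n, C) \to 0$.

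I would then invoke \cite[Theorem 5.1]{bernoulli}, exactly as in the Remark following Lemma~\ref{lem:extreme_homeom}, to promote this approximation to $D_1$ (and in fact to any $D_p$): that result states that on $\mathcal{C}_{ev}$ the map $\Phi$ is a homeomorphism also for the $D_1$-topology, so the identity $(\mathcal{C}_{ev}, d_\infty) \to (\mathcal{C}_{ev}, D_1)$ is a homeomorphism and $d_\infty$-convergence of the $C_n$ automatically yields $D_1$-convergence. The same theorem characterises wcc on $\mathcal{C}_{ev}$ via weak convergence of the associated Pickands dependence measures, which is exactly what $d_\infty$-convergence supplies through the homeomorphism $\Upsilon$ of Lemma~\ref{lem:extreme_homeom}; hence the same approximating sequences converge weakly conditional to $C$ as well, establishing all three assertions simultaneously.

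The only real subtlety is the wcc part: in $(\mathcal{C}, d_\infty)$ at large, wcc is strictly stronger than $D_p$-convergence, so the collapse of this gap is genuinely a feature of the EVC subfamily and must be extracted carefully from the cited theorem rather than taken for granted. Once this equivalence is quoted explicitly, however, the corollary is immediate and no further analysis is required; the main work is simply in matching notation between \cite{bernoulli} and the Pickands-measure framework set up in Section~\ref{section:EVC}.
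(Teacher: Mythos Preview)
Your proposal is correct and matches the paper's approach exactly: the paper simply states that the corollary follows from Theorem~\ref{thm:evc_density_result} combined with \cite[Theorem~5.1]{bernoulli}, which is precisely the upgrade mechanism you describe. Your additional remark about the wcc subtlety being a genuine feature of the EVC subfamily is a helpful clarification that the paper leaves implicit.
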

\begin{Cor}
The family $\{C\in \mathcal{C}_{ev}\colon \mathrm{supp}(\mu_C^{dis})= \mathrm{supp}(\mu_C^{sing}) = \mathrm{supp}(\mu_C^{abs}) = \mathbb{I}^2\}$ is dense in $(\mathcal{C}_{ev},D_1)$ and in $\mathcal{C}_{ev}$ w.r.t. wcc.
\end{Cor}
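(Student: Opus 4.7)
The plan is to deduce this corollary from the already established density in $(\mathcal{C}_{ev}, d_\infty)$, namely Theorem \ref{thm:evc_density_convex_comb}, by invoking \cite[Theorem 5.1]{bernoulli}, which ensures that on the restricted class of Extreme Value copulas, convergence in $d_\infty$ is equivalent to convergence in $D_1$ and to weak conditional convergence. This is exactly the mechanism used immediately above in Corollary \ref{cor:evc_density_result_wcc} to upgrade Theorem \ref{thm:evc_density_result} componentwise, so the present corollary is a straightforward aggregation of that upgrade for the joint full-support property.

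Concretely, given any target copula $C \in \mathcal{C}_{ev}$, Theorem \ref{thm:evc_density_convex_comb} produces a sequence $(C_n)_{n \in \mathbb{N}}$ lying in the family $\{B \in \mathcal{C}_{ev} : \mathrm{supp}(\mu_B^{abs}) = \mathrm{supp}(\mu_B^{dis}) = \mathrm{supp}(\mu_B^{sing}) = \mathbb{I}^2\}$ with $d_\infty(C_n, C) \to 0$. Since both the approximants $C_n$ and the limit $C$ belong to $\mathcal{C}_{ev}$, \cite[Theorem 5.1]{bernoulli} applies and yields $D_1(C_n, C) \to 0$ as well as $C_n \overset{wcc}{\longrightarrow} C$, establishing density of the family both in $(\mathcal{C}_{ev}, D_1)$ and in $\mathcal{C}_{ev}$ with respect to weak conditional convergence.

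Since all the substantive work has already been carried out, in particular the construction of a Pickands dependence measure realizing the three full-support properties simultaneously (which underlies Theorem \ref{thm:evc_density_convex_comb}) and the transfer of regularity and support from $\vartheta$ to $C_A$ (Theorem \ref{thm:evc_regularity_meas_cop} together with Lemma \ref{lem:support_measure_copula}), no essential obstacle remains. The only point that demands care is verifying that the cited equivalence from \cite{bernoulli} is indeed applicable here without additional hypotheses, but this is precisely the same application that is used without difficulty in the proof of Corollary \ref{cor:evc_density_result_wcc}, so the present corollary reduces to a one-line invocation.
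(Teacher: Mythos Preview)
Your proposal is correct and follows exactly the approach indicated in the paper: combine Theorem \ref{thm:evc_density_convex_comb} (density in $(\mathcal{C}_{ev},d_\infty)$) with \cite[Theorem 5.1]{bernoulli} to upgrade convergence to $D_1$ and to weak conditional convergence. The paper treats this corollary as an immediate consequence of these two ingredients and does not supply a separate proof, so your write-up merely spells out what the paper leaves implicit.
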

\subsection{Derivatives of Extreme Value copulas}
We now revisit the results from Section \ref{section:der_copulas} and show that EVC typically exhibit more regular behavior.
To simplify notation we denote the family of all EVC $C \in \mathcal{C}_{ev}$ with the property that 
there exists some set $\Lambda \in \mathcal{B}(\mathbb{I})$ with $\lambda(\Lambda) = 1$ such that for all 
$x \in \Lambda$ there exists some $y = y_x \in (0,1)$ such that $\partial_1C(x,y)$ does not exist 
by $\mathcal{C}_{ev,p} := \mathcal{C}_{p} \cap \mathcal{C}_{ev}$. The more pathological family $\mathcal{C}_{ev,\mathcal{Q}}$
is defined in the same manner, i.e., $\mathcal{C}_{ev,\mathcal{Q}} :=  \mathcal{C}_{\mathcal{Q}} \cap \mathcal{C}_{ev}$. 
In other words: For every $C \in \mathcal{C}_{ev,\mathcal{Q}}$ there exists some set 
$\Lambda \in \mathcal{B}(\mathbb{I})$ with $\lambda(\Lambda) = 1$ such that for every $x \in \Lambda$
the partial derivative $\partial_1C(x,y)$ does not exist on a dense set of $y \in \mathbb{I}$. \\
 We start by showing that $\mathcal{C}_{ev,\mathcal{Q}}$ is non-empty.
\begin{Ex}[Example of an element of $\mathcal{C}_{ev,\mathcal{Q}}$]\label{ex:evc_nondiff}
Suppose that $\mathcal{Q}=\{q_1,q_2,...\} \subset (0,1)$ is dense in $\mathbb{I}$. Furthermore suppose that $\{\alpha_1,\alpha_2,\ldots\} 
\subset (0,1)$ fulfills $\sum_{i \in \mathbb{N}}\alpha_i = 1$. 
Then defining $\vartheta \in \mathcal{P}(\mathbb{I})$ by
$$
\tilde{\vartheta} = \sum_{i = 1}^\infty\alpha_i\delta_{q_i}
$$
and, if necessary, normalizing $\tilde{\vartheta}$ in the sense of Lemma \ref{lem:normalizing_pickands} yields 
a discrete measure $\vartheta \in \mathcal{P}_\mathcal{A}$ with full support $\mathbb{I}$. For the sake of simplicity
we will assume that $\vartheta=\tilde{\vartheta}$ holds. 
Let $A=A_\vartheta \in \mathcal{A}$ denote the Pickands dependence function induced by $\vartheta$ and $C_A$ 
the corresponding EVC. 
We want to show that for every $x_0 \in (0,1)$ we have that $x \mapsto C_A(x,f^{q_j}(x_0))$ is not differentiable 
at $x_0$ and proceed as follows:
Let $x_0 \in (0,1)$ be arbitrary but fixed, set $y_j := x_0^{\frac{1}{q_j}-1}$ with $q_j \in \mathcal{Q}$ 
and, using equation \eqref{eq:der_pick_meas}, consider the function
\begin{align*}
x \mapsto D^{+}A\left(\frac{\log(x)}{\log(xy_j)}\right) &= 2\left(\underbrace{\sum_{i \neq j}\alpha_i\mathbf{1}_{[0,\frac{\log(x)}{\log(xy_j)}]}(q_i)}_{=: I(x)} + \underbrace{\alpha_j\mathbf{1}_{[0,\frac{\log(x)}{\log(xy_j)}]}(q_j)}_{=: II(x)}\right) - 1.
\end{align*}
Applying the fact that $\frac{\log(x_0)}{\log(x_0y_j)} = q_j \neq q_i$ for $i \neq j$ yields that $I(x)$ is continuous in $x_0$. Defining the functions
$$
\xi(s,y_j) := K_A(s,[0,y_j]) - 2C_A(s,y_j)\alpha_j\mathbf{1}_{[0,\frac{\log(s)}{\log(sy_j)}]}(q_j)\frac{\log(y_j)}{s\log(sy_j)},
$$
$$
\varphi(x,y_j) := 2\int_{[0,x]}C_A(s,y_j)\alpha_j\mathbf{1}_{[0,\frac{\log(s)}{\log(sy_j)}]}(q_j)\frac{\log(y_j)}{s\log(sy_j)}\mathrm{d}\lambda(s),
$$
$$
\psi(x,y_j) := \int_{[0,x]}\xi(s,y_j)\mathrm{d}\lambda(s),
$$
for every $s,x \in (0,1)$ and applying disintegration we can write the copula $C_A$ as
$$
C_A(x,y_j) = \varphi(x,y_j) + \psi(x,y_j) 
$$
for every $x \in (0,1)$. Furthermore, using that $x_0$ is a point of continuity of the function $x \mapsto \xi(x,y_j)$
implies that the function $x \mapsto \psi(x,y_j)$ is differentiable at $x_0$.
Moreover, working with the right- and left-hand derivatives of $\varphi$, respectively, we obtain that
\begin{align*}
\partial_1^+\varphi(x_0,y_j) &= 2\lim_{h \downarrow 0}\frac{1}{h}\int_{[x_0,x_0+h]}C_A(s,y_j)\alpha_j\mathbf{1}_{[0,\frac{\log(s)}{\log(sy_j)}]}(q_j)\frac{\log(y_j)}{s\log(sy_j)}\mathrm{d}\lambda(s) \\&= 2\alpha_jC_A(x_0,y_j) \frac{\log(y_j)}{x_0\log(x_0y_j)}
\end{align*}
as well as
\begin{align*}
\partial_1^-\varphi(x_0,y_j) &= \lim_{h \downarrow 0} \frac{1}{h}\int_{[x_0-h,x_0]}C_A(s,y_j)\alpha_j\mathbf{1}_{[0,\frac{\log(s)}{\log(sy_j)}]}(q_j)\frac{\log(y_j)}{s\log(sy_j)}\mathrm{d}\lambda(s) = 0,
\end{align*}
which altogether shows that $x \mapsto C_A(\cdot,y_j)$ is not differentiable in $x_0$. Since $y_j$ was arbitrary 
and the set $\{x_0^{1/q_j-1}: j \in \mathbb{N}\}$ is obviously dense in $\mathbb{I}$ this completes the proof.
\end{Ex}
Similarly to Theorem \ref{thm:pathological_dense}, EVCs exhibiting pathological are well spread - the following 
\begin{Cor}\label{cor:CevQdense}
    The set $\mathcal{C}_{ev,\mathcal{Q}}$ is dense in $(\mathcal{C}_{ev},d_\infty)$.
\end{Cor}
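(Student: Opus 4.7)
The plan is to exploit the homeomorphism $\Phi \circ \Upsilon \colon (\mathcal{P}_\mathcal{A}, \tau_w) \to (\mathcal{C}_{ev}, d_\infty)$ provided by Lemma \ref{lem:extreme_homeom} and reduce the density claim to a weak approximation statement for Pickands dependence measures. Specifically, let $\tilde\vartheta = \sum_{i\in\mathbb{N}} \alpha_i \delta_{q_i} \in \mathcal{P}_\mathcal{A}$ denote the discrete Pickands dependence measure constructed in Example \ref{ex:evc_nondiff}, so that $\{q_i\}_{i \in \mathbb{N}}$ is dense in $\mathbb{I}$. Given $C \in \mathcal{C}_{ev}$ with associated Pickands dependence measure $\vartheta \in \mathcal{P}_\mathcal{A}$, I would define
\begin{equation*}
\vartheta_n := \left(1-\tfrac{1}{n}\right)\vartheta + \tfrac{1}{n}\tilde\vartheta \quad \text{and} \quad C_n := \Phi(\Upsilon(\vartheta_n)).
\end{equation*}
Convexity of $\mathcal{P}_\mathcal{A}$ gives $\vartheta_n \in \mathcal{P}_\mathcal{A}$, weak convergence $\vartheta_n \to \vartheta$ is immediate from testing against bounded continuous functions, and Lemma \ref{lem:extreme_homeom} then yields $d_\infty(C_n, C) \to 0$.

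The substantive step is to verify $C_n \in \mathcal{C}_{ev,\mathcal{Q}}$ for each $n$, and I would carry this out by reproducing the computation of Example \ref{ex:evc_nondiff} with $\vartheta_n$ in place of a purely discrete measure. Fix an arbitrary $x_0 \in (0,1)$ and, for $j \in \mathbb{N}$, set $y_j := x_0^{1/q_j - 1}$ so that $\log(x_0)/\log(x_0 y_j) = q_j$. By Lemma \ref{lem:der_pick_meas} the right-hand derivative $D^+ A_n$ of $A_n := \Upsilon(\vartheta_n)$ has a jump at $q_j$ of size at least $2\alpha_j/n > 0$, which translates via \eqref{GA} and \eqref{eq:ev_markov_kernel} to a corresponding jump of $s \mapsto K_{A_n}(s, [0, y_j])$ at $s = x_0$. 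Writing $F_{\vartheta_n} = G_n + F_{\vartheta_n}(\{q_j\})\,\mathbf{1}_{[q_j,1]}$ with $G_n$ continuous at $q_j$, and decomposing $C_n(\cdot, y_j) = \varphi_n(\cdot, y_j) + \psi_n(\cdot, y_j)$ analogously to the example, the continuous summand $\psi_n(\cdot, y_j)$ is differentiable at $x_0$ by Lebesgue's differentiation theorem, while an explicit calculation of the one-sided partial derivatives of $\varphi_n(\cdot, y_j)$ produces values that differ by a nonzero multiple of the jump magnitude. Consequently $\partial_1 C_n(x_0, y_j)$ does not exist.

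The main technical obstacle is to justify continuity of $G_n \circ t(\cdot)$ at $x_0$, where $t(x) := \log(x)/\log(xy_j)$, in the presence of further atoms or singular mass of $\vartheta$ possibly accumulating near $q_j$. This is handled by the standard fact that the distribution function of any probability measure on $\mathbb{I}$ is continuous precisely at points of zero atomic mass; since the atomic contribution at $q_j$ has been removed by construction, $G_n$ is continuous at $q_j$ regardless of the structure of $\vartheta$ elsewhere, and continuity of $t$ at $x_0$ then transfers this to continuity in $x$ at $x_0$. Having established non-differentiability of $\partial_1 C_n(x_0, y_j)$ for every $j \in \mathbb{N}$, density of $\{x_0^{1/q_j - 1}\colon j \in \mathbb{N}\}$ in $\mathbb{I}$ (a consequence of density of $\{q_i\}$ in $(0,1)$ together with continuity and strict monotonicity of $t \mapsto x_0^{1/t - 1}$) delivers $C_n \in \mathcal{C}_{ev,\mathcal{Q}}$ and closes the argument.
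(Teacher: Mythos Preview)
Your argument is correct. The paper's own proof is considerably shorter: it approximates $\vartheta$ weakly by a sequence of \emph{purely discrete} Pickands dependence measures with full support (invoking Lemma~\ref{lem:normalizing_pickands} to land the approximants in $\mathcal{P}_\mathcal{A}$) and then applies Lemma~\ref{lem:equiv_conv_meas_evc}; membership in $\mathcal{C}_{ev,\mathcal{Q}}$ is then immediate from Example~\ref{ex:evc_nondiff} as stated, with no further computation.

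Your route differs in two places. First, you approximate via convex combinations $\vartheta_n=(1-\tfrac{1}{n})\vartheta+\tfrac{1}{n}\tilde\vartheta$, which lie in $\mathcal{P}_\mathcal{A}$ by convexity of the mean-$\tfrac{1}{2}$ constraint, so you avoid the normalization Lemma~\ref{lem:normalizing_pickands} entirely. Second, because your $\vartheta_n$ need not be purely discrete, you cannot quote Example~\ref{ex:evc_nondiff} verbatim and instead redo its computation under the weaker hypothesis that $\vartheta_n$ merely has a dense set of atoms. The key observation making this work---that after removing the atom at $q_j$ the remaining distribution function is automatically continuous at $q_j$, regardless of any singular or further atomic mass of $\vartheta$ nearby---is exactly right and is what you isolate as the ``main technical obstacle.'' In effect you prove a mild extension of Example~\ref{ex:evc_nondiff}: any EVC whose Pickands dependence measure has a dense set of atoms in $(0,1)$ lies in $\mathcal{C}_{ev,\mathcal{Q}}$. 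The paper's approach is more economical; yours is more self-contained and yields this slightly stronger intermediate statement. One notational slip: you write $F_{\vartheta_n}(\{q_j\})$ where you mean $\vartheta_n(\{q_j\})$.
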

\begin{proof}
Let $C_A \in \mathcal{C}_{ev}$ and $\vartheta \in \mathcal{P}_\mathcal{A}$ be its Pickands dependence measure. 
Then, using Lemma \ref{lem:normalizing_pickands} there exists a 
sequence $(\vartheta_n)_{n \in \mathbb{N}}$ in $\mathcal{P}_\mathcal{A}$ consisting of discrete measures with full 
support that converges weakly to $\vartheta$. Applying Lemma \ref{lem:equiv_conv_meas_evc} now yields the result.
\end{proof}
\noindent The following simple observation will be key for studying Baire category results for EVC in the next section.
\begin{theorem}\label{thm:equiv_discrete_path_ev}
    Let $C \in \mathcal{C}_{ev}$. Then $\mu_C^{dis}(\mathbb{I}^2) > 0$ if, and only if $C\in \mathcal{C}_{ev,p}$.
\end{theorem}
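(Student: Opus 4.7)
The plan is to prove the two implications separately, both relying on the dictionary between point masses of the Pickands dependence measure $\vartheta$ and the discrete component of $C$ provided by Theorem \ref{thm:evc_regularity_meas_cop}(ii) together with Lemma \ref{lem:der_pick_meas}.

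For the direction $\mu_C^{dis}(\mathbb{I}^2) = 0 \Rightarrow C \notin \mathcal{C}_{ev,p}$, I would simply invoke Corollary \ref{cor:abs_cont}: if the discrete component of $C$ is degenerated, then there exists $\Lambda \in \mathcal{B}(\mathbb{I})$ with $\lambda(\Lambda)=1$ such that $\partial_1 C(x,y) = K_C(x,[0,y])$ for every $x \in \Lambda$ and every $y \in \mathbb{I}$. In particular $y \mapsto \partial_1 C(x,y)$ exists on all of $\mathbb{I}$ for every $x \in \Lambda$, which directly contradicts the defining property of $\mathcal{C}_{ev,p}$.

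For the converse, assume $\mu_C^{dis}(\mathbb{I}^2)>0$. By Theorem \ref{thm:evc_regularity_meas_cop}(ii) there is some $t_0 \in (0,1)$ with $\alpha := \vartheta(\{t_0\})>0$, and by Lemma \ref{lem:der_pick_meas} the right-hand derivative $D^+A$ then has a jump of height $2\alpha$ at $t_0$. I would fix an arbitrary $x_0 \in (0,1)$, set $y_0 := f^{t_0}(x_0) = x_0^{1/t_0 - 1} \in (0,1)$, and prove that $\partial_1 C(x_0, y_0)$ does not exist; since $x_0$ is arbitrary, this already yields $C \in \mathcal{C}_{ev,p}$ (in fact with $\Lambda = (0,1)$). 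The key step is the decomposition
$$D^+A(t) = \bigl(2(\vartheta-\alpha\delta_{t_0})([0,t]) - 1\bigr) + 2\alpha\,\mathbf{1}_{[t_0,1]}(t),$$
which, inserted in $G_A(t) = A(t) + D^+A(t)(1-t)$ and then in the kernel formula \eqref{eq:ev_markov_kernel}, splits $K_A(x,[0,y_0])$ into a part that is continuous in $x$ at $x_0$ and a jump part proportional to $\mathbf{1}_{[t_0,1]}\bigl(\phi(x)\bigr)$, where $\phi(x) := \log(x)/\log(xy_0)$. Noting that $\phi$ is continuous and strictly monotone on $(0,1)$ with $\phi(x_0) = t_0$, disintegration gives $C(x,y_0) = \int_{[0,x]} K_A(s,[0,y_0])\, \mathrm{d}\lambda(s)$, and computing the right- and left-hand derivatives at $x_0$ by the same Lebesgue-point argument used in Example \ref{ex:evc_nondiff} produces one-sided values that differ by a nonzero multiple of $\alpha(1-t_0)$.

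The main obstacle, and essentially the only real work, lies in the last step: tracking the sign of $\phi$'s monotonicity so that the one-sided limits of $G_A \circ \phi$ at $x_0$ are correctly identified with $G_A(t_0)$ and $G_A(t_0-)$ via right-continuity of $D^+A$, and verifying that the contribution arising from $\vartheta - \alpha\delta_{t_0}$ is genuinely continuous in $x$ at $x_0$ (hence contributes equally to the one-sided Lebesgue averages and cancels when subtracting $\partial_1^-$ from $\partial_1^+$). These computations mirror those of Example \ref{ex:evc_nondiff} essentially line by line, so the write-up can be considerably shortened by citing that example immediately after the displayed decomposition of $D^+A$.
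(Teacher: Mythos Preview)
Your proposal is correct. The forward direction (from $\mu_C^{dis}(\mathbb{I}^2)>0$ to $C\in\mathcal{C}_{ev,p}$) is essentially identical to the paper's: both invoke Theorem \ref{thm:evc_regularity_meas_cop}(ii) to obtain a point mass $t_0\in(0,1)$, set $y_x=f^{t_0}(x)$, and defer the actual computation of the one-sided derivatives to Example \ref{ex:evc_nondiff}.

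For the reverse implication you take a genuinely different and shorter route. You argue the contrapositive via Corollary \ref{cor:abs_cont}: if $\mu_C^{dis}(\mathbb{I}^2)=0$ then there is a full-measure set $\Lambda$ on which $\partial_1C(x,y)$ exists for \emph{every} $y\in\mathbb{I}$, which immediately rules out membership in $\mathcal{C}_{ev,p}$. The paper instead works directly: it assumes $C\in\mathcal{C}_{ev,p}$, picks $x$ with $\partial_1^+C(x,y_x)\neq\partial_1^-C(x,y_x)$, reads off from the explicit kernel formula \eqref{eq:ev_markov_kernel} that $D^+A$ is discontinuous at $\log(x)/\log(xy_x)$, and then applies Theorem \ref{lem:mass_graph}. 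Your argument is cleaner and uses nothing specific to EVCs in this direction; the paper's argument has the mild advantage of pinpointing exactly where the point mass of $\vartheta$ sits (namely at $\log(x)/\log(xy_x)$), but this extra information is not needed for the statement.
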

\begin{proof}
In case of $\mu_C^{dis}(\mathbb{I}^2) > 0$ applying Theorem \ref{thm:evc_regularity_meas_cop} yields that 
$\vartheta \in \mathcal{P}_\mathcal{A}$ has a point mass in some point $t_0 \in (0,1)$. 
For fixed $x \in (0,1)$, setting $y_x = f^{t_0}(x)$ and proceeding analogously to Example \ref{ex:evc_nondiff} shows that $\partial_1^+C(x,y_x) \neq \partial_1^-C(x,y_x)$.\\
For the other direction suppose that $C \in \mathcal{C}_{ev,p}$. Then there exists some set $\Lambda \in \mathcal{B}(\mathbb{I})$ with $\lambda(\Lambda) = 1$ such that for $x \in \Lambda$ we can find some $y_x \in (0,1)$ 
with $\partial_1^+C(x,y_x) \neq \partial_1^-C(x,y_x)$. It follows that $D^+A(\frac{\log(x)}{\log(xy_x)}) \neq D^-A(\frac{\log(x)}{\log(xy_x)})$, so $\frac{\log(x)}{\log(xy_x)}$ is a point of discontinuity of $D^+A$, and 
applying Theorem \ref{lem:mass_graph} yields $\mu_C^{dis}(\mathbb{I}^2) > 0$.
\end{proof}
\subsection{Baire category results for Extreme Value copulas}
Throughout this section we again work with the afore-mentioned one-to-one-to-one 
correspondence between Pickands dependence functions $A \in \mathcal{A}$, Pickands dependence measures 
$\vartheta \in \mathcal{P}_\mathcal{A}$, and EVC $C \in \mathcal{C}_{ev}$. 
Rewriting equation \eqref{eq:L and R Pickands} in terms of the measure $\vartheta \in \mathcal{P}_\mathcal{A}$ obviously yields
\begin{equation}\label{eq:L and R measure}
L_\vartheta = \sup\{x \in \mathbb{I}\colon \vartheta([0,x]) = 0\}, \,\,\, R_\vartheta = \inf\{x \in \mathbb{I}\colon \vartheta([0,x]) = 1\},
\end{equation}
with the convention $\sup \emptyset:=0$ and $\inf \emptyset:=1$.
According to \cite{evc-mass} the support of an EVC $C \in \mathcal{C}_{ev}$ is fully determined by the functions $f^{L_\vartheta}$ and $f^{L_\vartheta}$, respectively, i.e.,
$$
\mathrm{supp}(\mu_{C}) = \{(x,y)\in \mathbb{I}^2 \colon f^{L_\vartheta}(x) \leq y \leq f^{R_\vartheta}(x)\}.
$$
In other words: An EVC $C \in \mathcal{C}_{ev}$ has full support if, and only if $L_\vartheta = 0$ and $R_\vartheta = 1$. We will show now that a topologically typical Pickands dependence measure has this property:
\begin{Lemma}
    The set
    $$
    \{\vartheta \in \mathcal{P}_\mathcal{A} \colon L_\vartheta = 0 \text{ and } R_\vartheta = 1\}
    $$
    is co-meager in $\mathcal{P}_\mathcal{A}$ w.r.t the weak topology.
\end{Lemma}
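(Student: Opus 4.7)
The plan is to show that the complement of the target set is meager by writing it as a countable union of closed, nowhere dense sets, and then invoking the Baire category framework (noting that $\mathcal{P}_\mathcal{A}$ is weakly closed in the weakly compact and metrizable $\mathcal{P}(\mathbb{I})$, since $\vartheta \mapsto \int x\,\mathrm{d}\vartheta$ is weakly continuous).

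First, I would observe that $L_\vartheta>0$ holds if and only if there is some $n \in \mathbb{N}$ with $\vartheta([0,\tfrac{1}{n}))=0$, and similarly $R_\vartheta<1$ holds if and only if $\vartheta((1-\tfrac{1}{n},1])=0$ for some $n$. Hence the complement of $\{\vartheta\in\mathcal{P}_\mathcal{A} \colon L_\vartheta=0,\ R_\vartheta=1\}$ decomposes as
\[
\bigcup_{n\in\mathbb{N}} \mathcal{L}_n \cup \bigcup_{n\in\mathbb{N}} \mathcal{R}_n,
\qquad
\mathcal{L}_n := \{\vartheta\in\mathcal{P}_\mathcal{A}\colon \vartheta([0,\tfrac{1}{n}))=0\},
\quad
\mathcal{R}_n := \{\vartheta\in\mathcal{P}_\mathcal{A}\colon \vartheta((1-\tfrac{1}{n},1])=0\}.
\]
Since $[0,\tfrac{1}{n})$ and $(1-\tfrac{1}{n},1]$ are open in $\mathbb{I}$, the Portmanteau theorem immediately yields weak closedness of $\mathcal{L}_n$ and $\mathcal{R}_n$ (if $\vartheta_k \to \vartheta$ weakly and $\vartheta_k(U)=0$ for all $k$, then $\vartheta(U) \leq \liminf_k \vartheta_k(U) = 0$).

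Next, I would establish that each $\mathcal{L}_n$ has empty interior in $\mathcal{P}_\mathcal{A}$. Given any $\vartheta_0 \in \mathcal{L}_n$, I pick a small $a \in (0,\tfrac{1}{n})$ and define the symmetric perturbation
\[
\vartheta_k := \left(1-\tfrac{1}{k}\right)\vartheta_0 + \tfrac{1}{2k}\bigl(\delta_{a} + \delta_{1-a}\bigr), \qquad k\in\mathbb{N}.
\]
A direct computation gives $\int x\,\mathrm{d}\vartheta_k = (1-\tfrac{1}{k})\cdot\tfrac{1}{2} + \tfrac{1}{2k}\cdot 1 = \tfrac{1}{2}$, so $\vartheta_k \in \mathcal{P}_\mathcal{A}$; moreover $\vartheta_k([0,\tfrac{1}{n})) \geq \tfrac{1}{2k} > 0$, so $\vartheta_k \notin \mathcal{L}_n$, and obviously $\vartheta_k \to \vartheta_0$ weakly as $k \to \infty$. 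The argument for $\mathcal{R}_n$ is symmetric, using the same perturbation which simultaneously places mass in $(1-\tfrac{1}{n},1]$.

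Altogether, each $\mathcal{L}_n$ and each $\mathcal{R}_n$ is closed with empty interior, hence nowhere dense, so their countable union is meager and its complement in $\mathcal{P}_\mathcal{A}$ is co-meager. I do not anticipate a genuine obstacle; the only slightly delicate point is the bookkeeping that $[0,\tfrac{1}{n})$ rather than $[0,\tfrac{1}{n}]$ is the correct open set for applying Portmanteau to obtain closedness, while ensuring that the union over $n$ still exhausts the event $\{L_\vartheta>0\}$.
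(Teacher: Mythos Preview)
Your proof is correct and follows essentially the same strategy as the paper: write the complement as a countable union of weakly closed sets (verified via Portmanteau on open sets) that are nowhere dense. The only noteworthy difference is in the nowhere-density step: the paper invokes the separately established fact that Pickands dependence measures with strictly increasing distribution function are dense in $(\mathcal{P}_\mathcal{A},\tau_w)$, whereas you give a self-contained argument via the explicit mean-preserving perturbation $\vartheta_k = (1-\tfrac{1}{k})\vartheta_0 + \tfrac{1}{2k}(\delta_a + \delta_{1-a})$, which simultaneously handles both $\mathcal{L}_n$ and $\mathcal{R}_n$. Your approach is slightly more economical here since it avoids appealing to the normalization lemma, while the paper's approach has the advantage of reusing a density result already needed elsewhere.
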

\begin{proof}
    It suffices to shows that the set 
    $\{\vartheta \in \mathcal{P}_\mathcal{A} \colon L_\vartheta = 0 \text{ and } R_\vartheta = 1\}^c$ is of first 
    category, which can be done as follows. For every $n \in \mathbb{N}$ define the sets $A_n$, $B_n$ by
    $$
    A_n := \{\vartheta \in \mathcal{P}_\mathcal{A} \colon  \vartheta((0,\tfrac{1}{n})) = 0\} \text{ and }  B_n := \{\vartheta \in \mathcal{P}_\mathcal{A} \colon  \vartheta([0,1-\tfrac{1}{n}]) = 1\}.
    $$ 
    Suppose that $\vartheta,\vartheta_1,\vartheta_2,... \in A_n$ are such that
     $(\vartheta_\ell)_{\ell \in \mathbb{N}}$ converges weakly to $\vartheta$. Then applying Portmanteau's theorem 
     yields $\vartheta \in A_n$, so the set $A_n$ is closed with respect to the weak topology. 
     Since an analogous argument shows that each set $B_n$ is closed it follows that $A_n \cup B_n$ is closed. 
     Since measures with strictly increasing distribution functions are dense in $\mathcal{P}_\mathcal{A}$
     (see the proof of Corollary \ref{cor:CevQdense}), the sets $A_n \cup B_n$ are nowhere dense in $\mathcal{P}_\mathcal{A}$. Considering that $\{\vartheta \in \mathcal{P}_\mathcal{A} \colon L_\vartheta>0 \text{ or } R_\vartheta < 1\} \subseteq \bigcup_{n \in \mathbb{N}}(A_n \cup B_n)$, the desired result follows. 
\end{proof}
Applying Lemma \ref{lem:extreme_homeom} together with the previous lemma yields the following corollary, stating 
that typical EVCs have full support.
\begin{Cor}\label{cor:evc_full_sup}
    The set 
    $$
    \{C \in \mathcal{C}_{ev} \colon \mathrm{supp}(\mu_C) = \mathbb{I}^2\}
    $$
    is co-meager in $(\mathcal{C}_{ev},d_\infty)$. In other words: Topologically typical EVCs have full support.
\end{Cor}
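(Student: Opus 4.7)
The plan is to reduce the statement to the co-meagerness result for Pickands dependence measures established in the preceding lemma, and then transport that result to $(\mathcal{C}_{ev},d_\infty)$ via the homeomorphism supplied by Lemma \ref{lem:extreme_homeom}. Specifically, I would first invoke the support characterization recalled right before the lemma, namely
\[
\mathrm{supp}(\mu_{C}) = \{(x,y)\in \mathbb{I}^2 \colon f^{L_\vartheta}(x) \leq y \leq f^{R_\vartheta}(x)\},
\]
which shows that an EVC $C = (\Phi \circ \Upsilon)(\vartheta)$ has full support on $\mathbb{I}^2$ if, and only if, its associated Pickands dependence measure satisfies $L_\vartheta = 0$ and $R_\vartheta = 1$. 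Consequently, the set $\mathcal{F} := \{C \in \mathcal{C}_{ev} \colon \mathrm{supp}(\mu_C) = \mathbb{I}^2\}$ is exactly the image under $\Phi \circ \Upsilon$ of the set $\mathcal{G} := \{\vartheta \in \mathcal{P}_\mathcal{A} \colon L_\vartheta = 0 \text{ and } R_\vartheta = 1\}$ from the previous lemma.

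Next, I would appeal to Lemma \ref{lem:extreme_homeom}, which asserts that $\Phi \circ \Upsilon$ is a homeomorphism from $(\mathcal{P}_\mathcal{A},\tau_w)$ onto $(\mathcal{C}_{ev},d_\infty)$. Since homeomorphisms preserve nowhere dense sets (and hence meager and co-meager sets), and since the preceding lemma established that $\mathcal{G}$ is co-meager in $(\mathcal{P}_\mathcal{A},\tau_w)$, it follows that $\mathcal{F} = (\Phi \circ \Upsilon)(\mathcal{G})$ is co-meager in $(\mathcal{C}_{ev},d_\infty)$, which is precisely the claim.

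There is essentially no obstacle here because both pieces of machinery have already been established: the support characterization and the homeomorphism. The only point requiring (brief) care is to note explicitly why $\mathcal{G}$ maps onto $\mathcal{F}$ under $\Phi \circ \Upsilon$ as sets, i.e., to check that the characterization ``$L_\vartheta=0$ and $R_\vartheta=1$'' is both necessary and sufficient for full support; but this is immediate from the displayed equality for $\mathrm{supp}(\mu_C)$ since $f^{0}$ and $f^{1}$ are understood to degenerate to the boundaries of $\mathbb{I}^2$. The concluding sentence of the corollary then simply re-expresses this as the topological-typicality statement.
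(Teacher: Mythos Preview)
Your proposal is correct and follows essentially the same route as the paper: the paper simply states that the corollary follows by applying Lemma \ref{lem:extreme_homeom} together with the preceding lemma, which is precisely your argument of transporting the co-meager set $\{\vartheta : L_\vartheta=0,\ R_\vartheta=1\}$ through the homeomorphism $\Phi\circ\Upsilon$ after identifying it with the full-support EVCs via the support characterization. You have spelled out the details (preservation of co-meager sets under homeomorphisms, cf.\ Lemma \ref{lem:homeom_preserve_cat}) more explicitly than the paper does, but the underlying strategy is identical.
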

In analogy with the space of all bivariate copulas $\mathcal{C}$ (see \cite{typ_cop_sing}), typical EVCs are not absolutely continuous. Our proofs builds upon Lemma \ref{lem:extreme_homeom}.
\begin{Lemma}
The set of all absolutely continuous Pickands dependence measures $\mathcal{P}_\mathcal{A}^{abs}$ is of first Baire category in $\mathcal{P}_\mathcal{A}$ with respect to the weak topology.
\end{Lemma}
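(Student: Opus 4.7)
The plan is to exhibit a co-meager $G_\delta$ subset of $\mathcal{P}_\mathcal{A}$ that is disjoint from $\mathcal{P}_\mathcal{A}^{abs}$. For every $n \in \mathbb{N}$, I would define
\[
V_n := \bigl\{\vartheta \in \mathcal{P}_\mathcal{A} \colon \exists\, U \subseteq \mathbb{I} \text{ open with } \lambda(U) < \tfrac{1}{n} \text{ and } \vartheta(U) > 1 - \tfrac{1}{n}\bigr\}.
\]
Note that $\mathcal{P}_\mathcal{A}$ is the preimage of $\{\tfrac{1}{2}\}$ under the weakly continuous functional $\vartheta \mapsto \int x\, \mathrm{d}\vartheta$, hence closed in the compact Polish space $\mathcal{P}(\mathbb{I})$; in particular it is a Baire space. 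By the Portmanteau theorem $\vartheta \mapsto \vartheta(U)$ is lower semi-continuous for every open $U$, so each $V_n$ is weakly open in $\mathcal{P}_\mathcal{A}$: if $\vartheta_0 \in V_n$ is witnessed by an open $U$, the condition $\vartheta(U) > 1-\tfrac{1}{n}$ persists on a weak neighborhood of $\vartheta_0$.

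The main work is establishing density of each $V_n$. Here I would invoke the fact, already exploited in the proof of Corollary \ref{cor:CevQdense}, that discrete probability measures in $\mathcal{P}_\mathcal{A}$ are weakly dense (constructed by atomic approximation followed by the normalization step of Lemma \ref{lem:normalizing_pickands}; cf.\ Lemma \ref{lem:ev-measure-approximaton}). For any such discrete $\vartheta = \sum_{i \in \mathbb{N}} \alpha_i \delta_{q_i}$, pick $N$ large enough so that $\sum_{i > N} \alpha_i < \tfrac{1}{2n}$ and cover the finite set $\{q_1,\ldots,q_N\}$ by open intervals of total Lebesgue measure strictly less than $\tfrac{1}{n}$. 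The resulting open $U$ satisfies $\vartheta(U) \geq 1 - \tfrac{1}{2n} > 1 - \tfrac{1}{n}$, so $\vartheta \in V_n$. Hence $V_n$ contains a dense subset of $\mathcal{P}_\mathcal{A}$ and is itself dense. Baire's theorem then yields that $G := \bigcap_{n \in \mathbb{N}} V_n$ is a dense $G_\delta$, in particular co-meager.

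To close the loop, I would observe that any $\vartheta \in \mathcal{P}_\mathcal{A}^{abs}$ admits, by the standard $\varepsilon$-$\delta$ characterization of absolute continuity, some $\delta > 0$ with $\lambda(B) < \delta \Rightarrow \vartheta(B) < \tfrac{1}{2}$. Choosing $n \geq 3$ with $\tfrac{1}{n} < \delta$, every open $U$ with $\lambda(U) < \tfrac{1}{n}$ satisfies $\vartheta(U) < \tfrac{1}{2} < 1 - \tfrac{1}{n}$, so $\vartheta \notin V_n$ and in particular $\vartheta \notin G$. Consequently $\mathcal{P}_\mathcal{A}^{abs} \subseteq \mathcal{P}_\mathcal{A} \setminus G$, which is meager, and the claim follows.

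The main obstacle is the density step: one needs discrete approximants to honour the mean constraint $\int x\, \mathrm{d}\vartheta = \tfrac{1}{2}$, which rules out naive partition-based weak approximations. This is exactly what the normalization scheme of Lemma \ref{lem:normalizing_pickands} supplies, making the argument go through without further technical work.
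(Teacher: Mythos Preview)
Your proof is correct and takes a genuinely different route from the paper's. The paper covers $\mathcal{P}_\mathcal{A}^{abs}$ directly by sets $\mathcal{W}_n = \{\vartheta \in \mathcal{P}_\mathcal{A}^{abs} : \vartheta(\{k_\vartheta > n\}) \leq \tfrac{1}{4}\}$, defined via the density $k_\vartheta$, and then shows each $\mathcal{W}_n$ is nowhere dense by constructing, for every finitely supported discrete $\beta \in \mathcal{P}_\mathcal{A}$, a continuous test function $f$ supported on tiny intervals around the atoms with $\int f\,\mathrm{d}\beta = 1$ but $\int f\,\mathrm{d}\vartheta \leq \tfrac{1}{2}$ for all $\vartheta \in \mathcal{W}_n$. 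Your argument is the dual picture: instead of covering the absolutely continuous measures by nowhere dense sets, you build a dense $G_\delta$ set $G = \bigcap_n V_n$ of ``highly concentrated'' measures and show it misses $\mathcal{P}_\mathcal{A}^{abs}$. Both proofs rest on the same two ingredients---density of discrete Pickands measures and the fact that absolutely continuous measures cannot pile up on Lebesgue-small sets---but your formulation is arguably cleaner: openness of $V_n$ is immediate from Portmanteau without any test-function construction, the sets $V_n$ are defined on all of $\mathcal{P}_\mathcal{A}$ rather than only on $\mathcal{P}_\mathcal{A}^{abs}$, and the disjointness step is a one-line appeal to the $\varepsilon$--$\delta$ characterization of absolute continuity. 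The paper's approach, by contrast, makes the separating functional explicit, which has some expository value but costs a little more work.
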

\begin{proof}
For every $\vartheta \in \mathcal{P}_{\mathcal{A}}^{abs}$ let $\mathcal{k}_\vartheta$ denote its density and, for 
every $n \in \mathbb{N}$ define the set $G_n^\vartheta$ by
$$
G_n^\vartheta := \{x \in \mathbb{I} \colon \mathcal{k}_\vartheta(x) > n\}. 
$$
Furthermore set 
$$
\mathcal{W}_n := \left\{\vartheta \in \mathcal{P}_{\mathcal{A}}^{abs}\colon \vartheta(G_n^\vartheta) \leq \frac{1}{4}\right\}.
$$
Considering $\lambda(\bigcap_{n=1}^\infty G_n^\vartheta)=0$, absolute continuity implies 
 $\vartheta(\bigcap_{n=1}^\infty G_n^\vartheta)=0$ so, using continuity from above we get that  
 $\vartheta(G_n^\vartheta)< \frac{1}{4}$ holds for all $n$ sufficiently large. 
This implies $$\mathcal{P}_{\mathcal{A}}^{abs} \subseteq \bigcup_{n \in \mathbb{N}}\mathcal{W}_n.$$ 
To complete the proof it suffices to show that $\mathcal{W}_n$ is nowhere dense in $(\mathcal{P}_{\mathcal{A}},\tau_w)$, which 
can be done as follows: Consider an arbitrary discrete Pickands dependence measure $\beta \in \mathcal{P}_{\mathcal{A}}$
with only finitely many point masses, i.e., $\beta = \sum_{i = 1}^N \alpha_i \delta_{x_i}$ with $x_i \in (0,1)$, 
$\alpha_i \in (0,1]$, $2 \leq N \in \mathbb{N}$ and $\sum_{i=1}^N \alpha_i = 1$. 
Setting $x_0 := 0, x_{N+1} := 1$ and considering 
$$
r := \frac{1}{8nN} \min\left\{|x_{i}-x_j|:i,j, \in \{0,\ldots,N+1\}\right\}
$$
obviously $\lambda(\bigcup_{i=1}^N (x_i-r,x_i+r)) \leq \frac{1}{4n}$ holds.
Letting $f: \mathbb{I} \rightarrow [0,\infty)$ denote a continuous function fulfilling 
$f(x_i) = 1$, $f\mid_{(x_i-r,x_i+r)} \in (0,1]$ and $f = 0$ in $\mathbb{I} \setminus \bigcup_{i=1}^N (x_i-r,x_i+r)$, 
then $\int_{\mathbb{I}} f \mathrm{d}\beta = 1$ by construction. On the other hand, for arbitrary 
$\vartheta \in \mathcal{W}_n$ we have that
\begin{align*}
\int_{\mathbb{I}}f\mathrm{d}\vartheta &= \int_{G_n^\vartheta}f\mathrm{d}\vartheta + 
\int_{\mathbb{I} \setminus G_n^\vartheta}f\mathrm{d}\vartheta \leq 
\frac{1}{4} + \int_{\mathbb{I} \setminus G_n^\vartheta}f\mathrm{d}\vartheta  \leq \frac{1}{2} < 
1=\int_{\mathbb{I}} f \mathrm{d}\beta = 1.
\end{align*}
Using the fact, that (as direct consequence of Lemma \ref{lem:normalizing_pickands} in combination 
with Glivenko-Cantelli's theorem) 
discrete Pickands dependence measures with finitely many point masses are dense 
in $(\mathcal{P}_{\mathcal{A}},\tau_w)$ yields that $\mathcal{W}_n$ is nowhere dense in $\mathcal{P}_\mathcal{A}$, 
which completes the proof.
\end{proof}
\noindent As direct consequence we get the following result on the family of absolutely continuous EVCs.
\begin{theorem}\label{cor:abs_ev_meager}
    The family $\mathcal{C}_{ev,abs}$ is of first category in $(\mathcal{C}_{ev},d_\infty)$.
\end{theorem}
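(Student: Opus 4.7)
The plan is to transfer the category statement to $\mathcal{C}_{ev}$ via the homeomorphism $\Phi \circ \Upsilon$ of Lemma \ref{lem:extreme_homeom}. Since Baire category is a topological invariant, it suffices to show that $(\Phi \circ \Upsilon)^{-1}(\mathcal{C}_{ev,abs})$ is of first category in $(\mathcal{P}_\mathcal{A}, \tau_w)$. By Theorem \ref{thm:evc_regularity_meas_cop}$(i)$ this preimage equals
$$\mathcal{N} := \{\vartheta \in \mathcal{P}_\mathcal{A} : \vartheta|_{(0,1)} \ll \lambda|_{(0,1)}\},$$
i.e., the Pickands dependence measures whose restriction to the open interior is absolutely continuous.

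A subtle but important point is that $\mathcal{N}$ is strictly larger than $\mathcal{P}_\mathcal{A}^{abs}$: a Pickands measure may carry point mass at $\{0\}$ or $\{1\}$ and still induce an absolutely continuous EVC (the independence copula $\Pi$, produced by $\tfrac{1}{2}(\delta_0 + \delta_1)$, is the archetype). The preceding Lemma therefore cannot be quoted black-box, but its proof extends to $\mathcal{N}$ with only bookkeeping changes. For $\vartheta \in \mathcal{N}$ I would let $\mathcal{k}_\vartheta$ denote the density of $\vartheta|_{(0,1)}$, set $G_n^\vartheta := \{x \in (0,1) : \mathcal{k}_\vartheta(x) > n\}$, and define
$$\widetilde{\mathcal{W}}_n := \bigl\{\vartheta \in \mathcal{N} : \vartheta(G_n^\vartheta) \leq \tfrac{1}{4}\bigr\}.$$
Continuity from above together with $\lambda(\bigcap_n G_n^\vartheta) = 0$ immediately yields $\mathcal{N} \subseteq \bigcup_{n \in \mathbb{N}} \widetilde{\mathcal{W}}_n$.

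For the nowhere-density step I would reuse the test-function construction from the preceding Lemma verbatim. Given a finitely supported discrete $\beta = \sum_{i=1}^N \alpha_i \delta_{x_i}$ with $x_i \in (0,1)$, the radius $r$ chosen there ensures that $\mathrm{supp}(f) \subseteq \bigcup_i (x_i - r, x_i + r) \subset (0,1)$, so in particular $f(0) = f(1) = 0$. Consequently, any atomic part of $\vartheta \in \mathcal{N} \setminus \mathcal{P}_\mathcal{A}^{abs}$ sitting at $\{0,1\}$ contributes nothing to $\int f \, d\vartheta$, and the key estimate
$$\int_\mathbb{I} f \, d\vartheta \;\leq\; \tfrac{1}{4} + n\,\lambda(\mathrm{supp}(f)) \;\leq\; \tfrac{1}{2} \;<\; 1 \;=\; \int_\mathbb{I} f \, d\beta$$
carries over unchanged. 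Since such $\beta$ are weakly dense in $\mathcal{P}_\mathcal{A}$ by Lemma \ref{lem:normalizing_pickands}, each $\widetilde{\mathcal{W}}_n$ is nowhere dense, $\mathcal{N}$ is meager, and the corollary follows. The only real obstacle is being vigilant about the $\mathcal{P}_\mathcal{A}^{abs}$ vs.\ $\mathcal{N}$ distinction; once it is noticed, the fact that the test function $f$ vanishes at the endpoints resolves it cleanly.
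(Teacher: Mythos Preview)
Your approach is essentially the paper's: the theorem is presented there as a ``direct consequence'' of the preceding Lemma (meagerness of $\mathcal{P}_\mathcal{A}^{abs}$) together with the homeomorphism of Lemma \ref{lem:extreme_homeom}. You spell this out explicitly and, in doing so, are actually more careful than the paper. The paper's one-line deduction silently identifies $(\Phi\circ\Upsilon)^{-1}(\mathcal{C}_{ev,abs})$ with $\mathcal{P}_\mathcal{A}^{abs}$, whereas Theorem \ref{thm:evc_regularity_meas_cop}(i) only gives absolute continuity on $(0,1)$; you correctly observe that the preimage is the larger set $\mathcal{N}$ and that the test-function argument still goes through because $f$ is supported away from the endpoints. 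This is a genuine (if minor) gap in the paper's presentation that your version closes.
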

We now return to differentiability and show that typical Pickands dependence functions are everywhere 
differentiable.
\begin{Lemma}\label{lem:pickands_co_meager}
The set
$$\{A \in \mathcal{A}\colon A \text{ is differentiable at every } x \in (0,1)\}$$
is co-meager in $\mathcal{A}$ with respect to the uniform distance $\Vert \cdot \Vert_\infty$ on $\mathcal{A}$.
\end{Lemma}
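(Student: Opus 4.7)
The plan is to transport the problem from $(\mathcal{A},\Vert\cdot\Vert_\infty)$ to $(\mathcal{P}_{\mathcal{A}},\tau_w)$ via the homeomorphism $\Upsilon$ established in Lemma \ref{lem:extreme_homeom}, and then to identify the pathological set as a countable union of closed nowhere dense sets, mirroring the structure of the previous Baire-category arguments for $\mathcal{C}$ and for $\mathcal{C}_{ev,\mathrm{abs}}$.

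By Lemma \ref{lem:der_pick_meas} the right-hand derivative of a Pickands dependence function satisfies $D^+A(t)=2\vartheta([0,t])-1$, where $\vartheta$ is the Pickands dependence measure corresponding to $A$. Convexity of $A$ implies that $A$ is differentiable at $t\in(0,1)$ if, and only if $D^-A(t)=D^+A(t)$, equivalently if, and only if $F_\vartheta(t):=\vartheta([0,t])$ is continuous at $t$, i.e., if, and only if $\vartheta(\{t\})=0$. Consequently $\Upsilon^{-1}$ maps the target set bijectively onto
$$
\mathcal{P}_{\mathcal{A}}^{\mathrm{nat}}:=\{\vartheta\in\mathcal{P}_{\mathcal{A}}\colon\vartheta(\{t\})=0\text{ for all }t\in(0,1)\},
$$
and since $\Upsilon$ is a homeomorphism it suffices to show that $\mathcal{P}_{\mathcal{A}}\setminus\mathcal{P}_{\mathcal{A}}^{\mathrm{nat}}$ is meager in $(\mathcal{P}_{\mathcal{A}},\tau_w)$.

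For each $n\in\mathbb{N}$ define
$$
\mathcal{U}_n:=\left\{\vartheta\in\mathcal{P}_{\mathcal{A}}\colon\exists\,t\in[\tfrac{1}{n},1-\tfrac{1}{n}]\text{ with }\vartheta(\{t\})\geq\tfrac{1}{n}\right\},
$$
so that $\mathcal{P}_{\mathcal{A}}\setminus\mathcal{P}_{\mathcal{A}}^{\mathrm{nat}}\subseteq\bigcup_{n\in\mathbb{N}}\mathcal{U}_n$. Closedness of $\mathcal{U}_n$ follows from a standard Portmanteau argument: for a weakly convergent sequence $\vartheta_k\to\vartheta$ in $\mathcal{U}_n$, choose witnesses $t_k\in[\tfrac{1}{n},1-\tfrac{1}{n}]$, pass via Bolzano–Weierstrass to a subsequential limit $t_0\in[\tfrac{1}{n},1-\tfrac{1}{n}]$, and observe that for every $\varepsilon>0$, eventually $t_{k_j}\in[t_0-\varepsilon,t_0+\varepsilon]$, so Portmanteau's theorem for closed sets gives
$$
\vartheta([t_0-\varepsilon,t_0+\varepsilon])\geq\limsup_j\vartheta_{k_j}([t_0-\varepsilon,t_0+\varepsilon])\geq\tfrac{1}{n};
$$
letting $\varepsilon\downarrow 0$ yields $\vartheta(\{t_0\})\geq 1/n$, i.e., $\vartheta\in\mathcal{U}_n$.

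It remains to show that each $\mathcal{U}_n$ has empty interior. Since $\mathcal{U}_n$ is closed this reduces to density of $\mathcal{P}_{\mathcal{A}}^{\mathrm{nat}}$ in $(\mathcal{P}_{\mathcal{A}},\tau_w)$, which follows directly from Lemma \ref{lem:ev-measure-approximaton} (even in the stronger form that absolutely continuous Pickands dependence measures are dense). The statement then follows from the homeomorphism $\Upsilon$ together with the fact that a countable union of closed nowhere dense sets is meager. I expect the only conceptually delicate step to be the equivalence between differentiability of $A$ at $t$ and atomlessness of $\vartheta$ at $t$; the Baire-category bookkeeping is then essentially identical to that used in Lemma \ref{lem:typical:cop_full_supp} and in the preceding lemma on absolute continuity.
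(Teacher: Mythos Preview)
Your approach is essentially the same as the paper's, only transported from $(\mathcal{A},\Vert\cdot\Vert_\infty)$ to $(\mathcal{P}_{\mathcal{A}},\tau_w)$ via the homeomorphism $\Upsilon$: both proofs write the bad set as a countable union of closed sets indexed by a lower bound on the jump size and a compact window, establish closedness via Bolzano--Weierstrass, and deduce nowhere-density from the density of a ``smooth'' subclass. The paper works directly with $D^+A-D^-A$ in $\mathcal{A}$ and uses \cite[Theorem 5.1]{bernoulli} for the limiting step; you work with atoms of $\vartheta$ and use Portmanteau instead. Both routes are equally short.

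There is one genuine slip: your appeal to Lemma~\ref{lem:ev-measure-approximaton} does not give what you need. That lemma establishes density of measures whose \emph{discrete} (respectively \emph{singular}) component has full support; the first assertion even produces measures with a dense set of atoms in $(0,1)$, the opposite of what you want, and neither assertion implies density of $\mathcal{P}_{\mathcal{A}}^{\mathrm{nat}}$. To fill this, use the fact (cited in the paper from \cite{evc-mass} in the proof of Lemma~\ref{lem:pickands_co_meager}) that smooth Pickands dependence functions are dense in $(\mathcal{A},\Vert\cdot\Vert_\infty)$; transporting via $\Upsilon^{-1}$ and Lemma~\ref{lem:der_pick_meas} immediately yields density of absolutely continuous Pickands dependence measures in $(\mathcal{P}_{\mathcal{A}},\tau_w)$, which is more than enough for $\mathcal{P}_{\mathcal{A}}^{\mathrm{nat}}$. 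Alternatively, you can smear each atom of a given $\vartheta$ over a shrinking interval and then apply the normalization Lemma~\ref{lem:normalizing_pickands}; the added Dirac masses sit at $0$ or $1$ and hence do not create atoms in $(0,1)$.
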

\begin{proof}
    We prove that the set
    $$
    \hat{\mathcal{A}} := \{A \in \mathcal{A}\colon \exists x \in (0,1) \text{ such that } D^+A(x) > D^{-}A(x)\}
    $$
    is of first Baire category with respect to the topology induced by $\Vert \cdot \Vert_\infty$. For arbitrary $k \in \mathbb{N}$ and arbitrary $n \in \mathbb{N}$ we define the set $\mathcal{A}_{k,n}$ by
    $$
    \mathcal{A}_{k,n} := \left\{A\in \mathcal{A} \colon \exists x \in \left[\frac{1}{n},1-\frac{1}{n}\right] \text{ such that } (D^+A(x) - D^-A(x)) \geq \frac{1}{k}\right\}.
    $$
    Then, obviously $\hat{\mathcal{A}} \subseteq \bigcup_{k \in \mathbb{N}}\bigcup_{n \in \mathbb{N}_{\geq 3}}\mathcal{A}_{k,n}$.
     We show that the sets $\mathcal{A}_{k,n}$ are closed with respect to $\Vert \cdot \Vert_\infty$ and 
     proceed as follows. Consider a sequence $(A_j)_{j \in \mathbb{N}}$ in $\mathcal{A}_{k,n}$ converging 
     uniformly to some $A \in \mathcal{A}$. 
     For each $j \in \mathbb{N}$ there exists some $x_j \in [\frac{1}{n},1-\frac{1}{n}]$ such that $D^{+}A_j(x_j) - D^{-}A_j(x_j) \geq \frac{1}{k}$. Since $[\frac{1}{n},1-\frac{1}{n}]$ is compact, there exists a subsequence 
     $(x_{j_\ell})_{\ell \in \mathbb{N}}$ and some $x^*$ such that $x_{j_\ell} \overset{\ell \rightarrow \infty}{\longrightarrow} x^* \in [\frac{1}{n},1-\frac{1}{n}]$. We can fix an arbitrary small $\Delta > 0$ 
     fulfilling that $x^* + \Delta$ is a point of continuity of $D^+A$ and $x^* - \Delta$ is a point of continuity of $D^-A$. Applying \cite[Theorem 5.1]{bernoulli} we therefore obtain that
    $$
    \lim_{\ell \rightarrow \infty}D^{+}A_{j_\ell}(x^*+\Delta) = D^{+}A(x^*+\Delta), \quad \lim_{\ell \rightarrow \infty}D^{-}A_{j_\ell}(x^*-\Delta) = D^{-}A(x^*-\Delta). 
    $$
    Since $(x_{j_\ell})$ converges to $x^*$ there exists some $\ell_0 \in \mathbb{N}$ such that for all 
    $\ell \geq \ell_0$ we have $|x_{j_\ell} - x^*| \leq \Delta$. Using convexity of $A_{j_\ell}$ yields 
    $$
    D^{+}A_{j_\ell}(x^*+\Delta) - D^{-}A_{j_\ell}(x^*-\Delta) \geq \frac{1}{k},
    $$
    hence, considering the limit $\ell \rightarrow \infty$ yields 
    $$
    D^{+}A(x^*+\Delta) - D^{-}A(x^*-\Delta) \geq \frac{1}{k}.
    $$
Since contiunuity points of $D^{+}A$ and $D^{-}A$ are dense in $\mathbb{I}$ we may choose $\Delta > 0$ arbitrarily small and conclude that
$$
D^{+}A(x^*) - D^{-}A(x^*) \geq \frac{1}{k}.
$$
Since smooth Pickands dependence functions are dense in $\mathcal{A}$ (see \cite{evc-mass}), 
$\mathcal{A}_{k,n}$ is nowhere dense in $\mathcal{A}$, and the result follows. 
\end{proof}
Translating to EVCs we obtain the main result of this section saying that topologically typical EVC 
have degenerated discrete component:
\begin{theorem}\label{cor:typical_ev}
    The set $\{C \in \mathcal{C}_{ev}\colon\mu_C^{dis}(\mathbb{I}^2) = 0\}$ is co-meager in $\mathcal{C}_{ev}$.
\end{theorem}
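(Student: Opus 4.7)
The plan is to transfer the co-meager result on Pickands dependence functions (Lemma \ref{lem:pickands_co_meager}) to the space of EVCs using the homeomorphism $\Phi \colon (\mathcal{A},\Vert\cdot\Vert_\infty)\to(\mathcal{C}_{ev},d_\infty)$ from Lemma \ref{lem:extreme_homeom}. Since a homeomorphism preserves Baire category (nowhere dense sets, meager sets, and hence co-meager sets are preserved in both directions), it suffices to show that the preimage under $\Phi$ of $\{C\in\mathcal{C}_{ev}\colon \mu_C^{dis}(\mathbb{I}^2)=0\}$ contains a co-meager subset of $\mathcal{A}$.

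The key implication I would establish is the following: if $A\in\mathcal{A}$ is differentiable at every $x\in(0,1)$, then $C_A$ has degenerated discrete component. To prove this, first note that differentiability of $A$ at $x\in(0,1)$ means $D^+A(x)=D^-A(x)$. Recalling from the preliminaries that $D^-A(x)=D^+A(x-)$, this is precisely the statement that $D^+A$ is continuous at $x$. Hence the hypothesis yields continuity of $D^+A$ on all of $(0,1)$. By Lemma \ref{lem:der_pick_meas} we have $D^+A(t)=2\vartheta([0,t])-1$ for every $t\in[0,1)$, so the distribution function $F_\vartheta$ of the associated Pickands dependence measure $\vartheta$ is continuous on $(0,1)$, which exactly means that $\vartheta$ has no point masses in $(0,1)$. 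Applying Theorem \ref{thm:evc_regularity_meas_cop}(ii) (equivalently Theorem \ref{lem:mass_graph}) then gives $\mu_{C_A}^{dis}(\mathbb{I}^2)=0$, as required.

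Combining the two steps completes the argument: by Lemma \ref{lem:pickands_co_meager}, the set
\[
\mathcal{A}^\ast := \{A\in\mathcal{A}\colon A \text{ is differentiable at every } x\in(0,1)\}
\]
is co-meager in $(\mathcal{A},\Vert\cdot\Vert_\infty)$, and the key implication shows $\Phi(\mathcal{A}^\ast)\subseteq\{C\in\mathcal{C}_{ev}\colon\mu_C^{dis}(\mathbb{I}^2)=0\}$. Since $\Phi$ is a homeomorphism, $\Phi(\mathcal{A}^\ast)$ is co-meager in $(\mathcal{C}_{ev},d_\infty)$, and supersets of co-meager sets are co-meager, which concludes the proof.

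I do not expect any serious obstacle here, since all the heavy lifting has been done in the preceding lemmas: the one-to-one-to-one correspondence, Lemma \ref{lem:der_pick_meas} relating $D^+A$ to the distribution function of $\vartheta$, Theorem \ref{thm:evc_regularity_meas_cop} translating regularity of $\vartheta$ into regularity of $C$, and Lemma \ref{lem:pickands_co_meager} on typical Pickands dependence functions. The only point that requires a moment of care is recognising that everywhere-differentiability of $A$ on $(0,1)$ is exactly equivalent (via $D^-A(x)=D^+A(x-)$ and right-continuity) to continuity of $D^+A$ on $(0,1)$, i.e.\ to absence of atoms of $\vartheta$ in the open unit interval; atoms of $\vartheta$ at the endpoints $0$ or $1$ are irrelevant to $\mu_C^{dis}$ by Theorem \ref{lem:mass_graph}, which is why the statement of Theorem \ref{thm:evc_regularity_meas_cop}(ii) is phrased in terms of point masses in $(0,1)$ rather than in $\mathbb{I}$.
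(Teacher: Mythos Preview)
Your proposal is correct and follows essentially the same approach as the paper, which simply states that the result is an immediate consequence of Lemma \ref{lem:extreme_homeom}, Lemma \ref{lem:pickands_co_meager} and Theorem \ref{thm:evc_regularity_meas_cop}. You have filled in the details of this chain of implications---particularly the step linking everywhere-differentiability of $A$ on $(0,1)$ to the absence of point masses of $\vartheta$ in $(0,1)$ via Lemma \ref{lem:der_pick_meas}---exactly as intended.
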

\begin{proof}
Immediate consequence of Lemma \ref{lem:extreme_homeom}, Lemma \ref{lem:pickands_co_meager} and Theorem \ref{thm:evc_regularity_meas_cop}.
\end{proof}
\begin{Rem}
Viewing Theorem \ref{cor:typical_ev} in context of Theorem \ref{thm:equiv_discrete_path_ev} implies that 
a typical EVC $C \in \mathcal{C}_{ev}$ does not exhibit pathological behavior, i.e.,  
$C \not\in \mathcal{C}_{ev,p}$.
\end{Rem}
Combining Corollary \ref{cor:typical_ev}, Corollary \ref{cor:abs_ev_meager} and Corollary \ref{cor:evc_full_sup} 
yields the following result on typical EVCs.
\begin{Cor}\label{cor:typical_evc}
A topologically typical Extreme Value copula $C$ has degenerated discrete component, is not absolutely continuous and 
has full support. In particular, $\partial_1C(x,y)$ exists in full $(0,1)^2$.
\end{Cor}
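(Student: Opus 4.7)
The plan is to assemble the three Baire category results already established and then exploit the explicit form of the conditional distributions of EVCs in terms of the Pickands dependence function to upgrade the almost-everywhere differentiability from Section~\ref{section:der_copulas} to everywhere differentiability.

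For the main statement, I would first note that $(\mathcal{C}_{ev},d_\infty)$ is compact by the remark at the start of Section~\ref{section:EVC}, hence complete and therefore a Baire space. Corollary~\ref{cor:typical_ev}, Corollary~\ref{cor:abs_ev_meager} and Corollary~\ref{cor:evc_full_sup} identify three co-meager subfamilies of $\mathcal{C}_{ev}$: EVCs with degenerated discrete component, EVCs that are not absolutely continuous (the complement of the meager family $\mathcal{C}_{ev,abs}$), and EVCs with full support. Since a finite intersection of co-meager sets in a Baire space is co-meager, the family of EVCs satisfying all three properties simultaneously is itself co-meager, which is precisely the first assertion of the corollary.

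For the ``in particular'' claim, I would pick any $C=C_A$ from this co-meager intersection and translate degenerated discreteness first to the level of $\vartheta\in\mathcal{P}_\mathcal{A}$ and then to the level of $A\in\mathcal{A}$. Concretely, Theorem~\ref{thm:evc_regularity_meas_cop}(ii) gives that $\vartheta$ has no point mass in $(0,1)$; Lemma~\ref{lem:der_pick_meas} then yields that $D^+A(t)=2\vartheta([0,t])-1$ is continuous on $[0,1)$, and combining this with $D^-A(t)=D^+A(t-)$ forces $A$ to be differentiable at every $t\in(0,1)$ with continuous derivative. Consequently $G_A$ from equation~\eqref{GA} is continuous on $(0,1)$, and a direct chain-rule differentiation of the representation in equation~\eqref{eq:map_eq_pick_copula} produces
$$
\partial_1 C_A(x,y) \;=\; \frac{C_A(x,y)}{x}\,G_A\!\left(\frac{\log(x)}{\log(xy)}\right)
$$
at every $(x,y)\in(0,1)^2$, closing the circle with the Markov-kernel formula~\eqref{eq:ev_markov_kernel}.

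I do not expect a real obstacle: all the analytic work has been front-loaded into the three co-meager lemmas and into the dictionary between $\vartheta$, $A$ and $C_A$. The only point requiring a little care is that the chain-rule step needs differentiability of $A$ on the \emph{open} interval $(0,1)$ (not on $\mathbb{I}$), which is exactly what ``no point mass of $\vartheta$ in $(0,1)$'' delivers via Lemma~\ref{lem:der_pick_meas}; the possibly atomic behavior of $\vartheta$ at the endpoints $0$ and $1$ is harmless since $\partial_1 C(x,y)$ is only claimed in $(0,1)^2$.
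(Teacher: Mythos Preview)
Your proposal is correct and follows essentially the same route as the paper: the first assertion is obtained by intersecting the three co-meager families from Theorem~\ref{cor:typical_ev}, Theorem~\ref{cor:abs_ev_meager} and Corollary~\ref{cor:evc_full_sup}, and the differentiability claim is derived by translating $\mu_C^{dis}(\mathbb{I}^2)=0$ via Theorem~\ref{thm:evc_regularity_meas_cop}(ii)/Theorem~\ref{lem:mass_graph} and Lemma~\ref{lem:der_pick_meas} into differentiability of $A$ on $(0,1)$ and then applying the chain rule to the explicit representation~\eqref{eq:map_eq_pick_copula}. The paper leaves the ``in particular'' step implicit (it is essentially the contrapositive of the argument in the proof of Theorem~\ref{thm:equiv_discrete_path_ev}), whereas you spell it out; your observation that the endpoint behavior of $\vartheta$ is irrelevant since $t=\frac{\log x}{\log(xy)}$ ranges only over $(0,1)$ is the right way to handle the one subtle point.
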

After having established the main results, we round off this section with some simple observations on 
exchangeable EVCs. 
\begin{Lemma}
The following assertions hold.
\begin{itemize}
    \item[(1)] $\mathcal{C}_{ev}$ is nowhere dense in $(\mathcal{C},d_\infty)$.
    \item[(2)] Exchangeable EVCs are nowhere dense in $(\mathcal{C}_e,d_\infty)$.
\end{itemize}
\end{Lemma}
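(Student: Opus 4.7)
Both assertions can be proved by the same perturbation argument, so the plan is to handle them in parallel. First I would record that $\mathcal{C}_{ev}$ is closed in $(\mathcal{C},d_\infty)$ (since $(\mathcal{C}_{ev},d_\infty)$ is compact, as recalled at the beginning of Section~\ref{section:EVC}), and that $\mathcal{C}_e$ is closed in $(\mathcal{C},d_\infty)$ because exchangeability is preserved under pointwise (hence uniform) limits. Consequently $\mathcal{C}_{ev}\cap\mathcal{C}_e$ is closed in $(\mathcal{C}_e,d_\infty)$, and it suffices to show that every $C\in\mathcal{C}_{ev}$ is a $d_\infty$-limit of copulas outside $\mathcal{C}_{ev}$, with the approximants chosen exchangeable whenever $C$ is.

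For this I plan to use the single one-parameter perturbation
$$
\widetilde{C}_\delta := (1-\delta)\,C + \delta\,W, \qquad \delta\in(0,1),
$$
where $W(x,y):=\max\{x+y-1,0\}$ denotes the lower Fr\'echet--Hoeffding bound. Since $W$ is itself an exchangeable copula, $\widetilde{C}_\delta$ is a copula, is exchangeable whenever $C$ is, and satisfies $d_\infty(\widetilde{C}_\delta,C)\le\delta$. The only nontrivial step, which I expect to be the core of the argument, is to verify that $\widetilde{C}_\delta\notin\mathcal{C}_{ev}$ for every $\delta\in(0,1)$ and every $C\in\mathcal{C}_{ev}$.

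I would test max-stability of $\widetilde{C}_\delta$ at the single point $x=y=1/2$ with exponent $k=2$. Writing $u:=C(1/2,1/2)$ and using $W(1/2,1/2)=W(1/4,1/4)=0$ together with the max-stability identity $C(1/4,1/4)=u^{2}$ gives
$$
\widetilde{C}_\delta(1/4,1/4) = (1-\delta)\,u^{2}, \qquad
\widetilde{C}_\delta(1/2,1/2)^{2} = (1-\delta)^{2}\,u^{2}.
$$
The Pickands representation \eqref{eq:map_eq_pick_copula} yields $u=(1/4)^{A(1/2)}\ge 1/4>0$ for every EVC $C$, so the two expressions differ by $(1-\delta)\delta\,u^{2}>0$. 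Hence $\widetilde{C}_\delta$ fails max-stability and cannot belong to $\mathcal{C}_{ev}$; letting $\delta\downarrow 0$ completes the proof of both (1) and (2).

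The main obstacle --- really the only delicate point --- is choosing a perturbation that destroys max-stability uniformly across $\mathcal{C}_{ev}$. Mixing with $W$ is particularly convenient because $W$ vanishes at both test points, so the verification collapses to a one-line algebraic identity valid for every EVC, including the extreme cases $C=\Pi$ and $C=M$. Mixing with $\Pi$ or $M$ instead would work for most $C$, but in each case one specific EVC remains for which the test-point identity happens to hold, which would require a separate case distinction; using $W$ avoids this issue and handles both parts of the lemma simultaneously.
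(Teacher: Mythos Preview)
Your proposal is correct and follows essentially the same approach as the paper: both use the convex perturbation $(1-\varepsilon)C + \varepsilon W$, note that $W$ is exchangeable so the perturbation remains in $\mathcal{C}_e$, and exploit closedness of $\mathcal{C}_{ev}$. The only difference lies in how you verify $\widetilde{C}_\delta \notin \mathcal{C}_{ev}$: the paper relies (somewhat implicitly) on the fact that every EVC is positive quadrant dependent whereas $\widetilde{C}_\delta$ is not, while you instead test max-stability directly at $x=y=1/2$ with $k=2$ --- your route is more explicit and self-contained, avoiding the need to invoke the PQD property of EVCs.
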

\begin{proof}
Using the fact that $\mathcal{C}_{ev}$ is closed in $(\mathcal{C},d_\infty)$, 
for $C \in \mathcal{C}_{ev}$ setting $C_\varepsilon := (1 - \varepsilon) C + \varepsilon W$ we 
have that $C_\varepsilon \notin\mathcal{C}_{ev}$, implying that the interior of $\mathcal{C}_{ev}$ is empty.  
 The second assertion follows similarly using the fact that EVCs are positive quadrant dependent and $W$ is symmetric.
\end{proof}
\begin{Lemma}
The family $\mathcal{A}_s$ of symmetric Pickands dependence functions is nowhere dense in 
$(\mathcal{A},\Vert\cdot\Vert_\infty)$.
\end{Lemma}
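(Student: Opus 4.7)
The strategy is to establish both that $\mathcal{A}_s$ is closed in $(\mathcal{A},\Vert\cdot\Vert_\infty)$ and that its interior is empty; together these two facts immediately yield that $\mathcal{A}_s$ is nowhere dense. Closedness is straightforward: if $A_n \in \mathcal{A}_s$ converges uniformly to some $A \in \mathcal{A}$, then $A(t) = \lim_{n \to \infty} A_n(t) = \lim_{n \to \infty} A_n(1-t) = A(1-t)$ for every $t \in \mathbb{I}$, so $A \in \mathcal{A}_s$.

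For the empty-interior part, the plan is to perturb an arbitrary symmetric Pickands dependence function by a small convex combination with a fixed asymmetric reference function, exploiting the convexity of $\mathcal{A}$. Fix once and for all a non-symmetric $A^* \in \mathcal{A}$, for instance $A^*(t) := \max\{1-t,\tfrac{1+t}{2}\}$. A routine verification shows that $A^*$ is convex (being a maximum of two affine functions), satisfies $\max\{1-t,t\}\leq A^*(t) \leq 1$ on $\mathbb{I}$, and violates symmetry at, e.g., $t = 1/4$, where $A^*(1/4)=3/4 \neq 7/8 = A^*(3/4)$. Given any $A \in \mathcal{A}_s$ and any $\varepsilon > 0$, set $A'_\delta := (1-\delta) A + \delta A^*$ for $\delta > 0$ to be chosen. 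Since $\mathcal{A}$ is closed under convex combinations (convexity and the pointwise bounds $\max\{1-t,t\}\leq \cdot \leq 1$ are both preserved), we have $A'_\delta \in \mathcal{A}$. Moreover, using the symmetry of $A$ we obtain $A'_\delta(t) - A'_\delta(1-t) = \delta\,[A^*(t)-A^*(1-t)]$, which is nonzero at $t = 1/4$, hence $A'_\delta \notin \mathcal{A}_s$. Because $\Vert A'_\delta - A \Vert_\infty \leq \delta\, \Vert A^* - A \Vert_\infty \leq 2\delta$, choosing $\delta < \varepsilon/2$ produces an element of $\mathcal{A}\setminus\mathcal{A}_s$ within uniform distance $\varepsilon$ of $A$.

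Combining the two observations, $\overline{\mathcal{A}_s} = \mathcal{A}_s$ has empty interior in $(\mathcal{A},\Vert\cdot\Vert_\infty)$, so $\mathcal{A}_s$ is nowhere dense. I do not anticipate a substantive obstacle here; the only point worth any care is the explicit check that the chosen $A^*$ lies in $\mathcal{A}$ and that the convex combination remains a valid Pickands dependence function, both of which follow by inspection from the stability of the defining inequalities under taking convex combinations.
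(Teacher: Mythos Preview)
Your proof is correct and follows essentially the same approach as the paper: establish closedness of $\mathcal{A}_s$ (trivially), then show its interior is empty by perturbing any $A\in\mathcal{A}_s$ via a convex combination $(1-\delta)A+\delta A^*$ with a fixed asymmetric $A^*\in\mathcal{A}$. The paper's version is slightly terser (it takes an arbitrary asymmetric $D\in\mathcal{A}$ rather than exhibiting one explicitly, and frames the empty-interior step as a contradiction), but the underlying argument is identical.
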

\begin{proof}
    The family of symmetric Pickands dependence functions is obviously closed w.r.t. $\Vert \cdot \Vert_\infty$.
    Assume that $\mathcal{A}_s$ would not be nowhere dense. Then there exists some $\varepsilon > 0$ such that $B_\varepsilon(A) \subseteq \mathcal{A}_s$, whereby $B_\varepsilon(A)$ denotes the ball with center $A$ and radius $\varepsilon$ w.r.t. $\Vert \cdot \Vert_\infty$. Fixing an arbitrary asymmetric Pickands dependence function $D \in \mathcal{A}$ and setting $E := (1-\frac{\varepsilon}{3})A + \frac{\varepsilon}{3}D$ it follows that
     $E$ is an asymmetric Pickands dependence function with $E \in B_\varepsilon(A)$ and therefore $E \in \mathcal{A}_s$, 
     a contradiction.
 \end{proof}
\begin{Cor}
    The family of exchangeable EVC is nowhere dense in $(\mathcal{C}_{ev},d_\infty)$.
\end{Cor}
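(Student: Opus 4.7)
The plan is to transfer the nowhere-denseness from the Pickands side to the copula side via the homeomorphism $\Phi$ of Lemma \ref{lem:extreme_homeom}. The key observation is that an EVC $C_A$ is exchangeable if and only if the corresponding Pickands dependence function $A$ is symmetric, i.e., $A(t)=A(1-t)$ for all $t \in \mathbb{I}$. This follows directly from equation \eqref{eq:map_eq_pick_copula}, since $C_A(y,x) = (xy)^{A(\log(y)/\log(xy))}$ and $\log(y)/\log(xy) = 1 - \log(x)/\log(xy)$, so the requirement $C_A(x,y)=C_A(y,x)$ on $(0,1)^2$ translates to $A(t)=A(1-t)$ on $(0,1)$, and by continuity on all of $\mathbb{I}$.

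Consequently, the family $\mathcal{C}_{ev} \cap \mathcal{C}_e$ of exchangeable EVCs equals $\Phi(\mathcal{A}_s)$, where $\mathcal{A}_s$ denotes the family of symmetric Pickands dependence functions. By Lemma \ref{lem:extreme_homeom} the map $\Phi: (\mathcal{A},\|\cdot\|_\infty) \to (\mathcal{C}_{ev},d_\infty)$ is a homeomorphism, and homeomorphisms preserve nowhere-denseness (they map closed sets with empty interior to closed sets with empty interior). Applying the preceding lemma, which established that $\mathcal{A}_s$ is nowhere dense in $(\mathcal{A},\|\cdot\|_\infty)$, immediately yields that $\Phi(\mathcal{A}_s)$ is nowhere dense in $(\mathcal{C}_{ev},d_\infty)$.

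There is no real obstacle here: the only point worth verifying with care is the equivalence between exchangeability of $C_A$ and symmetry of $A$, which is a direct substitution in equation \eqref{eq:map_eq_pick_copula}. Everything else is formal transport along the established homeomorphism.
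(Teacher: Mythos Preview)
Your proof is correct and follows exactly the approach the paper intends: the corollary is stated without proof precisely because it is an immediate transport of the preceding lemma on $\mathcal{A}_s$ along the homeomorphism $\Phi$ of Lemma~\ref{lem:extreme_homeom} (cf.\ Lemma~\ref{lem:homeom_preserve_cat}(1)). Your explicit verification that exchangeability of $C_A$ is equivalent to symmetry of $A$ via equation~\eqref{eq:map_eq_pick_copula} is the only substantive ingredient, and you handle it correctly.
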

We conclude this section with an example of an atypical EVC:
\begin{Ex}\label{ex:discret_pickands}
Defining the probability measure
$$
\vartheta := \frac{1}{5}\,\delta_\frac{1}{4} + \frac{3}{5}\,\delta_\frac{1}{2} + \frac{1}{5}\,\delta_\frac{3}{4},
$$
clearly $\vartheta \in \mathcal{P}_\mathcal{A}$, and the corresponding Pickands dependence function is given by
$$
A(t) := \begin{cases}
    1-t,& \text{ if } t\in [0,\frac{1}{4})\\
    -\frac{3}{5}t+\frac{9}{10},& \text{ if } t\in [\frac{1}{4},\frac{1}{2})\\
     \frac{3}{5}t+\frac{3}{10},& \text{ if } t\in [\frac{1}{2},\frac{3}{4})\\
     t,& \text{ if } t\in [\frac{3}{4},1].
\end{cases}
$$
According to Corollary \ref{cor:typical_evc} the EVC $C_A$ is atypical, since it has a non-degenerated 
discrete component which is concentrated on the graphs of the functions $f^{\frac{1}{4}}$, $f^{\frac{1}{2}}$ and $f^{\frac{3}{4}}$. Moreover, $C_A$ does not have full support, since $\mathrm{supp}(\mu_{C_A}) = \{(x,y) \in \mathbb{I}^2 \colon f^{\frac{1}{4}}(x) \leq y \leq f^{\frac{3}{4}}(x)\}$. A sample of the copula $C_A$ is depicted in Figure \ref{fig:discrete_pickands_sample}.
\end{Ex}
\begin{figure}[!ht]
	\centering
	\includegraphics[width=1\textwidth]{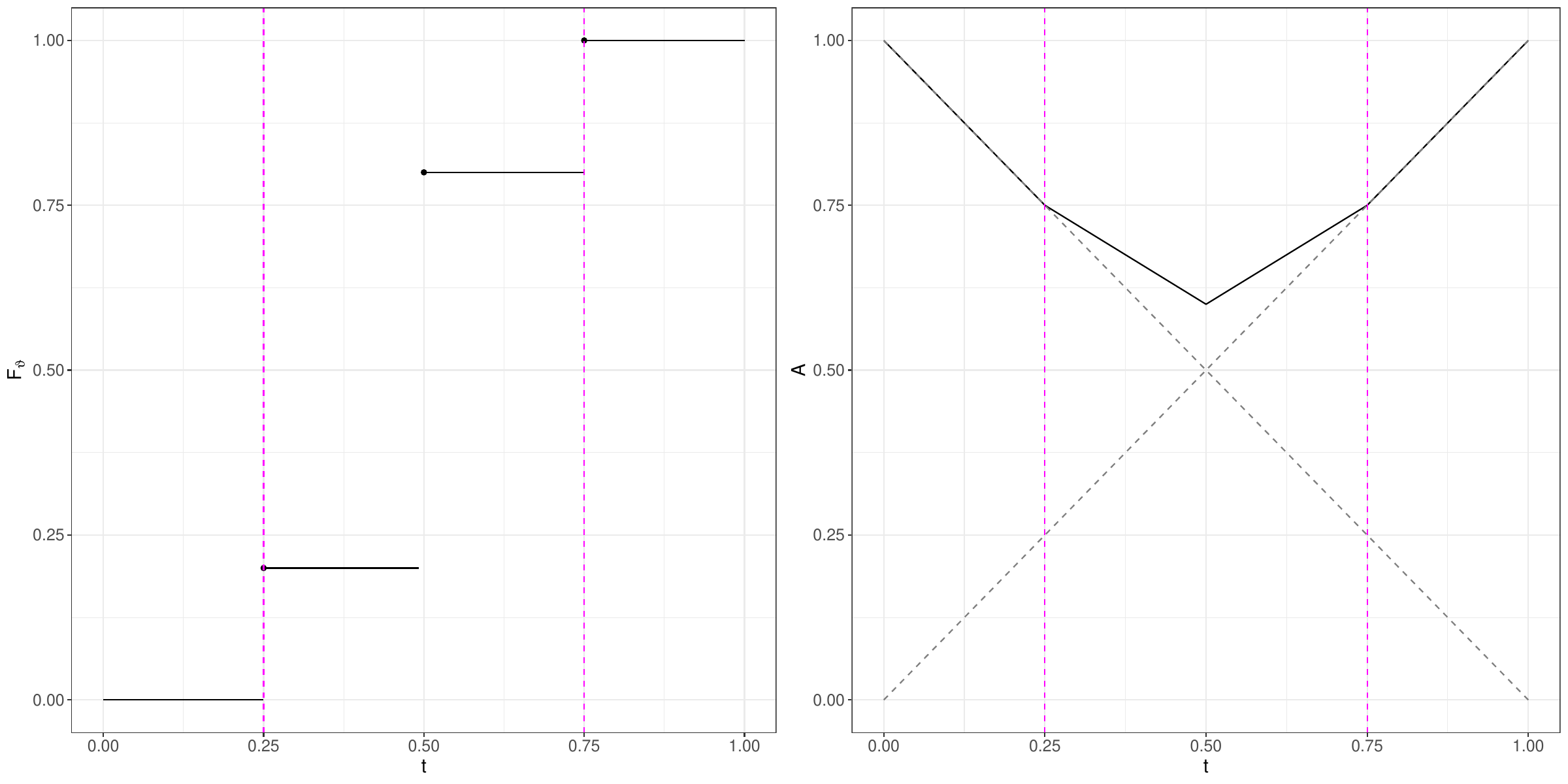}
	\caption{Graphs of the distribution function $F_\vartheta$ of the Pickands dependence measure $\vartheta$ (left) according to Example \ref{ex:discret_pickands} and Pickands dependence function $A$ (right) associated with it. The dashed magenta lines mark $L_\vartheta= \frac{1}{4}$ and $R_\vartheta = \frac{3}{4}$ according to equations \eqref{eq:L and R measure} and \eqref{eq:L and R Pickands}, respectively.}
\label{fig:discrete_pickands}
\end{figure}
\begin{figure}[!ht]
	\centering
	\includegraphics[width=1\textwidth]{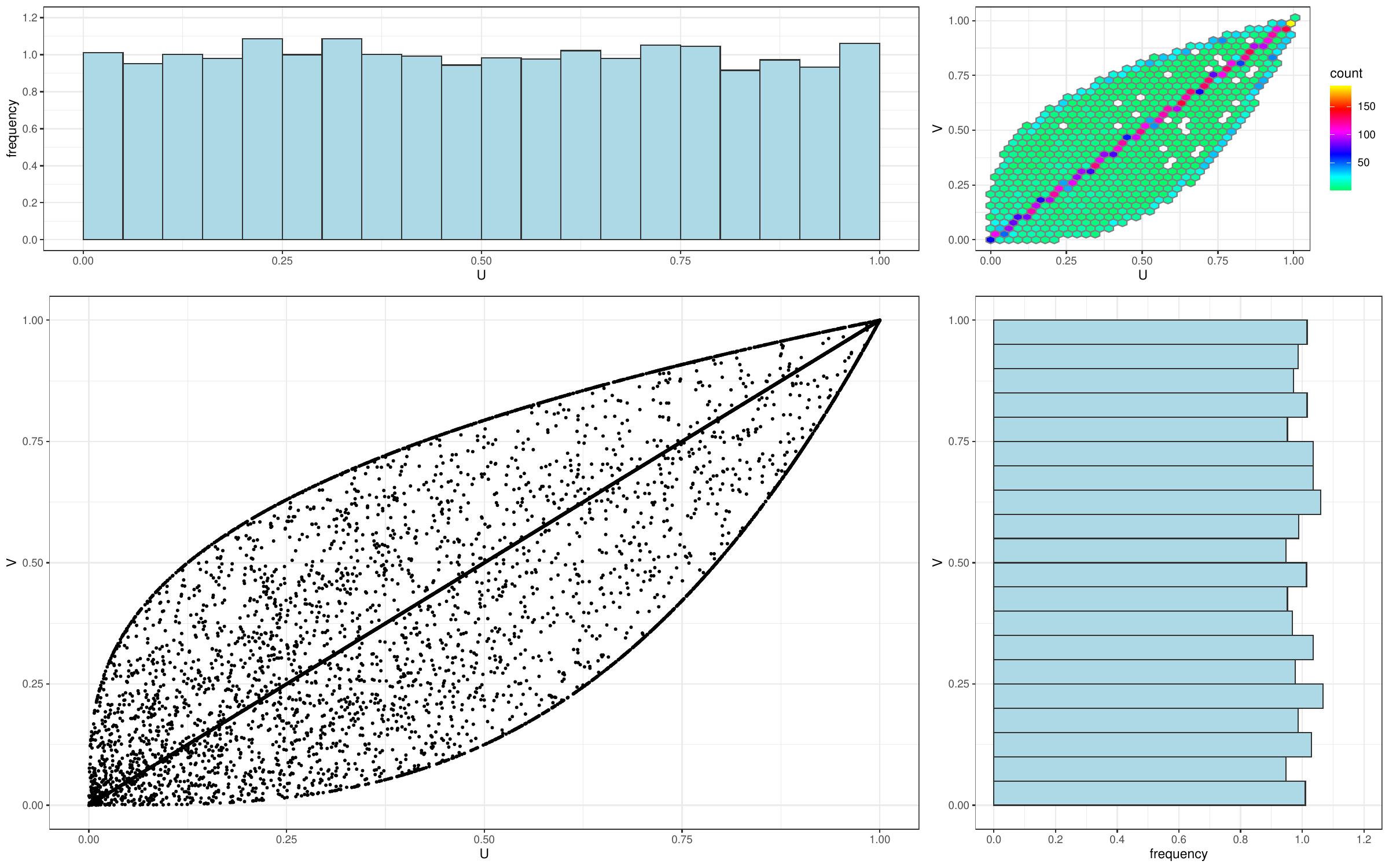}
	\caption{Sample of size 10000 of the EVC $C_A$, where $A$ is the Pickands dependence function considered in Example \ref{ex:discret_pickands}, its histogram and the two marginal histograms. The sample has been generated via conditional inverse sampling.}
\label{fig:discrete_pickands_sample}
\end{figure}
\clearpage
\section*{Acknowledgement}
\noindent The first author gratefully acknowledges the support of Red Bull GmbH
within the ongoing Data Science collaboration with the university of Salzburg. The second author gratefully acknowledges the support of the WISS 2025 project ‘IDA-lab Salzburg’
(20204-WISS/225/197-2019 and 20102-F1901166-KZP).

\appendix
\section{Auxiliary results}
\noindent The proof of the following lemma is included for the sake of completeness:
\begin{Lemma}\label{lem:homeom_preserve_cat}
    Let $E_1$ and $E_2$ be two topological spaces and $f \colon E_1 \rightarrow E_2$ be a homeomorphism. Then the following assertions hold:
    \begin{itemize}
        \item[(1)] $G \subseteq E_1$ is nowhere dense in $E_1$, if and only if $f(G)$ is nowhere dense $E_2$.
        \item[(2)] If $G \subseteq E_1$ is of first Baire category in $E_1$, then $f(G)$ is of first Baire category in $E_2$.
        \item[(3)] If $G \subseteq E_1$ is of second Baire category in $E_1$, then $f(G)$ is of second Baire 
        category in $E_2$.
        \item[(4)] If $G \subseteq E_1$ is co-meager in $E_1$, then $f(G)$ is co-meager in $E_2$
    \end{itemize}
\end{Lemma}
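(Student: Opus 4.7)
The plan is to reduce everything to assertion (1) and then bootstrap, using the two facts that a homeomorphism $f \colon E_1 \to E_2$ is a bijection which preserves closures and interiors, namely $f(\overline{A}) = \overline{f(A)}$ and $f(\mathrm{int}(A)) = \mathrm{int}(f(A))$ for every $A \subseteq E_1$. These identities follow directly from continuity of $f$ and $f^{-1}$ combined with the bijectivity of $f$.

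For (1), I would argue that $G$ is nowhere dense in $E_1$ if and only if $\mathrm{int}(\overline{G}) = \emptyset$. Applying $f$ and using the two preservation identities gives
\[
f(\mathrm{int}(\overline{G})) = \mathrm{int}(f(\overline{G})) = \mathrm{int}(\overline{f(G)}),
\]
so $\mathrm{int}(\overline{G}) = \emptyset$ is equivalent to $\mathrm{int}(\overline{f(G)}) = \emptyset$, i.e., to $f(G)$ being nowhere dense in $E_2$. Applying the same reasoning to $f^{-1}$ yields the converse.

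For (2), suppose $G = \bigcup_{n \in \mathbb{N}} G_n$ with each $G_n$ nowhere dense in $E_1$. Since $f$ is a bijection, $f(G) = \bigcup_{n \in \mathbb{N}} f(G_n)$, and by (1) each $f(G_n)$ is nowhere dense in $E_2$, so $f(G)$ is of first category. For (3), I would argue by contrapositive: if $f(G)$ were of first category in $E_2$, then applying (2) to the homeomorphism $f^{-1}$ would yield that $G = f^{-1}(f(G))$ is of first category in $E_1$, contradicting the hypothesis. For (4), co-meagerness of $G$ means $G^c$ is meager; since $f$ is a bijection, $f(G)^c = f(G^c)$, and assertion (2) gives that $f(G^c)$ is meager, whence $f(G)$ is co-meager.

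There is really no obstacle here; the only thing one must be slightly careful about is to use the full homeomorphism structure (not only continuity of $f$) in order to get both the preservation-of-interior identity and the ability to rewrite complements via $f(G)^c = f(G^c)$, both of which rest on bijectivity together with continuity of the inverse.
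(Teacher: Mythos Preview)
Your proof is correct and follows essentially the same route as the paper: both reduce everything to the preservation of closure and interior under a homeomorphism, prove (1) directly from those identities, and then bootstrap (2)--(4) from (1). Your handling of (3) via applying (2) to $f^{-1}$ and of (4) via the equality $f(G)^c = f(G^c)$ is marginally cleaner than the paper's version (which spells out (3) by explicitly pulling back each nowhere dense piece and argues (4) via the inclusion $f(G)^c \subseteq f(G^c)$ together with hereditariness of meagerness), but these are stylistic differences only.
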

\begin{proof}
    $(1)$: Let $G \subseteq E_1$ be nowhere dense. Then $\mathrm{int}(\overline{G}) = \emptyset$. 
    Since homeomorphisms preserve interior and closure, we have that $\mathrm{int}(\overline{f(G)}) = 
    \mathrm{int}(f(\overline{G})) = f(\mathrm{int}(\overline{G})) = f(\emptyset) = \emptyset$.
    Now assume that $f(G)$ is of first Baire category. Then $\emptyset = f(\mathrm{int}(\overline{G}))$ and using the fact that $f$ is injective we have that $\emptyset = \mathrm{int}(\overline{G})$, implying that $G$ is nowhere dense in $E_1$.\\
    $(2)$: Let $G \subseteq E_1$ be of first Baire category. Then there exist countably many nowhere dense sets $(G_n)_{n \in \mathbb{N}}$ in $E_1$ such that $G = \bigcup_{n \in \mathbb{N}}G_n$. Since $f$ is surjective we have that $f(G) = f\left(\bigcup_{n \in \mathbb{N}}G_n\right) = \bigcup_{n \in \mathbb{N}}f(G_n)$. Since $G_n$ is nowhere dense in $E_1$, applying $(1)$ yields that $f(G_n)$ is nowhere dense in $E_2$. Thus, $f(G)$ is the union of nowhere dense sets and therefore of first Baire category.\\
    $(3)$: Let $G \subseteq E_1$ be a set of second category. Suppose that $f(G)$ is not of second category, i.e., 
    that it is of first Baire category. Thus, $f(G) = \bigcup_{n\in\mathbb{N}}A_n$ for nowhere dense sets $A_1,A_2,...$ in $E_2$. Since $f$ is a homeomorphism, we can find a set $G_n$ such that $A_n = f(G_n)$. Applying $(1)$ yields that $G_n$ is nowhere dense and therefore $f(G) = \bigcup_{n\in\mathbb{N}}A_n = \bigcup_{n\in\mathbb{N}}f(G_n) = f\left(\bigcup_{n\in\mathbb{N}}G_n\right)$. Again, using that $f$ is a homeomorphism yields that
    $G = \bigcup_{n\in\mathbb{N}}G_n$ and therefore $G$ would be of first Baire category in $E_1$. A contradiction.\\
    $(4)$: Let $G \subseteq E_1$ be a co-meager set in $E_1$. Then its complement $G^c$ is of first Baire category. 
    Since $f$ is a homeomorphism we have that $f(G)^c \subseteq f(G^c)$. Applying $(2)$, $f(G^c)$ is of first Baire category in $E_2$ and since subsets of sets of first Baire category are of first Baire category too, $f(G)^c$ is of first 
    Baire category, implying that $f(G)$ is co-meager.
\end{proof}
We prove the interrelation between the Pickands dependence function $A \in \mathcal{A}$ and Pickands dependence 
measure $\vartheta \in \mathcal{P}_\mathcal{A}$ according to equation \eqref{eq:gamma}. 
Note that this interrelation goes back to \cite{beirlant2004,haan1977,Pickands}, we here only include a quick proof
for the sake of completness.
\begin{Lemma}\label{lem:measure_pickands}
    Suppose that $A \in \mathcal{A}$ and that $\vartheta \in \mathcal{P}_\mathcal{A}$ is the corresponding Pickands 
    dependence measure. 
    Then \begin{equation}\label{eq:formula_meas_pickands}
    A(t) := 1-t + 2\int_{[0,t]}\vartheta([0,z])\mathrm{d}\lambda(z)
    \end{equation}
    holds for all $t \in \mathbb{I}$.
\end{Lemma}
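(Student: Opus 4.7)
The plan is to construct $\vartheta$ from $A$ via the right-hand derivative $D^+A$, verify that the resulting measure lies in $\mathcal{P}_\mathcal{A}$, and then derive the integral identity from the fundamental theorem of calculus for absolutely continuous functions.

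First I would collect the structural properties of $D^+A$ implied by $A \in \mathcal{A}$. Convexity of $A$ gives existence of $D^+A$ on $[0,1)$, non-decreasingness, and right-continuity (as recalled in the preliminaries). The envelope inequality $\max\{1-t,t\} \leq A(t) \leq 1$ together with $A(0)=A(1)=1$ pins the range of $D^+A$ inside $[-1,1]$: the lower bound follows from $A(t) \geq 1-t$ and $A(0)=1$ by taking the difference quotient at $0$, the upper bound from $A(t) \leq 1$ and convexity on any interval abutting $1$. Defining
\begin{equation*}
F(t) := \frac{D^+A(t)+1}{2} \quad \text{for } t \in [0,1), \qquad F(1):=1,
\end{equation*}
and extending by $F(x)=0$ for $x<0$ and $F(x)=1$ for $x>1$ then yields a valid distribution function, whose induced probability measure we call $\vartheta$. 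By construction $\vartheta([0,t]) = F(t)$, so $D^+A(t) = 2\vartheta([0,t])-1$ on $[0,1)$.

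Second, since $|D^+A|\le 1$ on $[0,1)$, the function $A$ is Lipschitz and thus absolutely continuous on $\mathbb{I}$, which gives
\begin{equation*}
A(t) = A(0) + \int_{[0,t]} D^+A(z)\,\mathrm{d}\lambda(z) = 1 + \int_{[0,t]} \bigl(2\vartheta([0,z])-1\bigr)\,\mathrm{d}\lambda(z)
\end{equation*}
for all $t \in \mathbb{I}$. Rearranging the right-hand side produces exactly the claimed identity
\begin{equation*}
A(t) = 1 - t + 2\int_{[0,t]} \vartheta([0,z])\,\mathrm{d}\lambda(z).
\end{equation*}

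Third, to confirm $\vartheta \in \mathcal{P}_\mathcal{A}$ I would verify the mean-$\frac{1}{2}$ condition via the standard identity $\int_\mathbb{I} x\,\mathrm{d}\vartheta = \int_\mathbb{I} (1-F(t))\,\mathrm{d}\lambda(t)$, which gives
\begin{equation*}
\int_\mathbb{I} x\,\mathrm{d}\vartheta(x) = \frac{1}{2} - \frac{1}{2}\int_\mathbb{I} D^+A(t)\,\mathrm{d}\lambda(t) = \frac{1}{2} - \frac{1}{2}\bigl(A(1)-A(0)\bigr) = \frac{1}{2},
\end{equation*}
using $A(0)=A(1)=1$.

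I expect the main technical obstacle to be the careful handling of the endpoints: verifying $|D^+A(t)|\le 1$ uniformly (so that $F$ takes values in $\mathbb{I}$) and absorbing any jump of $D^+A$ at $t=1$ into the convention $F(1):=1$ so that $\vartheta$ remains a genuine probability measure. Once these boundary issues are resolved, the remainder is a direct application of absolute continuity of Lipschitz functions on $\mathbb{I}$ and elementary algebra.
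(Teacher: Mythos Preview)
Your argument is correct, but it takes a different route from the paper's proof. The paper starts from the classical Pickands spectral representation
\[
A(t) = 2\int_{[0,1]} \max\{ts,(1-t)(1-s)\}\,\mathrm{d}\vartheta(s),
\]
which is the relation by which ``corresponding'' is defined in the text, and then reduces this integral algebraically (splitting according to where the max is attained, rewriting the truncated mean of $\vartheta$ as an integral of its distribution function) until the claimed formula drops out. You instead bypass the spectral representation entirely: you build $\vartheta$ from $D^+A$ via $F=(D^+A+1)/2$, verify $\vartheta\in\mathcal{P}_\mathcal{A}$, and recover the formula from absolute continuity of $A$. This is exactly the construction the paper carries out later, in the proof of Lemma~\ref{lem:extreme_homeom}, to exhibit the inverse of $\Upsilon$.

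What each approach buys: the paper's computation confirms that the \emph{pre-existing} spectral-measure $\vartheta$ satisfies the integral identity, so it directly proves the lemma as phrased. Your construction is slicker analytically but, strictly read, proves the existence of \emph{some} $\vartheta\in\mathcal{P}_\mathcal{A}$ satisfying the formula; to match the statement you would still owe one line explaining why your $\vartheta$ coincides with the spectral-measure one (e.g.\ by appealing to uniqueness of the $A\leftrightarrow\vartheta$ correspondence, or by noting that differentiating the spectral representation also yields $D^+A(t)=2\vartheta([0,t])-1$). Once that identification is made, the two proofs are equivalent.
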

\begin{proof}
Following \cite{Pickands}, the interrelation between the Pickands dependence function $A$ and $\vartheta$ translates to
\begin{equation}
A(t) = 2 \int_{[0,1]} \max\{ts,(1-t)(1-s)\} d\vartheta(s).
\end{equation}
The last equation boils down to equation (3) in \cite{evc-mass} as follows: 
\begin{eqnarray*}
\frac{A(t)}{2} &=& (1-t) \int_{[0,1-t]} (1-s) d\vartheta(s) \, + \, t \int_{(1-t,1]} s d\vartheta(s) \\
&=& (1-t) \, \vartheta([0,1-t]) - (1-t) \int_{[0,1-t]} s d\vartheta(s) \, + \, t \int_{(1-t,1]} s d\vartheta(s) \\
&=& (1-t) \, \vartheta([0,1-t]) - \int_{[0,1-t]} s d\vartheta(s) \, + \, t \underbrace{\int_{[0,1]} s d\vartheta(s)} _{=\tfrac{1}{2}} \\
&=& (1-t) \, \vartheta([0,1-t]) + \, \frac{t}{2} \,- \int_{[0,1-t]} s d\vartheta(s).
\end{eqnarray*} 
The latter integral, however, can easily be simplified to (it is the expectation of the probability measure
$\vartheta'(E):=\frac{\vartheta(E)}{\vartheta([0,1-t])}$ and hence can be expressed as integral over the 
corresponding distribution function)
\begin{eqnarray*}
\int_{[0,1-t]} s d\vartheta(s) &=& \vartheta([0,1-t]) \int_{[0,1-t]} (1-\vartheta'([0,s])) d\lambda(s) \\
&=& \vartheta([0,1-t])(1-t) - \int_{[0,1-t]} \vartheta([0,s])ds.
\end{eqnarray*}  
Altogether we therefore get
\begin{eqnarray*}
\frac{A(t)}{2} &=& \frac{t}{2} + \int_{[0,1-t]} \vartheta([0,s])d\lambda(s),
\end{eqnarray*}
implying
\begin{equation}
A(t)= t + 2 \int_{[0,1-t]} \vartheta([0,s])d\lambda(s),
\end{equation}
and the latter is equivalent to equation (3) in \cite{evc-mass}.
\end{proof}
The next lemma states that weak convergence of measures in $\mathcal{P}_\mathcal{A}$ is equivalent to uniform 
convergence of the corresponding EVC (in fact, even to weak conditional convergence, 
see \cite{bernoulli}).
\begin{Lemma}\label{lem:equiv_conv_meas_evc}
    Let $\vartheta,\vartheta_1,\vartheta_2,...\in \mathcal{P}_\mathcal{A}$ be Pickands dependence measures and $A,A_1,A_2,...\in\mathcal{A}$ as well as $C,C_1,C_2,...\in\mathcal{C}_{ev}$ the corresponding Pickands dependence functions and EVCs, respectively. Then the following three conditions are equivalent:
    \begin{itemize}
        \item[(i)]  $\vartheta_n \overset{n\rightarrow\infty}{\longrightarrow} \vartheta$ weakly on $\mathbb{I}$,
        \item[(ii)] $A_n \overset{n\rightarrow\infty}{\longrightarrow} A$ \text{ uniformly on } $\mathbb{I}$,
        \item[(iii)] $C_n \overset{n\rightarrow\infty}{\longrightarrow} C$ \text{ uniformly on } $\mathbb{I}^2$.
    \end{itemize}
\end{Lemma}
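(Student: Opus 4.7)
The plan is to prove the cyclic chain (i) $\Rightarrow$ (ii) $\Rightarrow$ (iii) $\Rightarrow$ (i), exploiting at each step the explicit integral/algebraic interrelations recalled in equation \eqref{eq:gamma} and equation \eqref{eq:map_eq_pick_copula}, together with the convexity of Pickands dependence functions.

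For the first implication I would start from formula \eqref{eq:formula_meas_pickands}, i.e.,
$A_n(t) = 1 - t + 2\int_{[0,t]} F_{\vartheta_n}(z)\, d\lambda(z)$ with $F_{\vartheta_n}(z):=\vartheta_n([0,z])$. Weak convergence of $(\vartheta_n)$ to $\vartheta$ is equivalent to pointwise convergence $F_{\vartheta_n}(z) \to F_\vartheta(z)$ at every continuity point $z$ of $F_\vartheta$, and since all the $F_{\vartheta_n}$ are bounded by $1$, dominated convergence yields pointwise convergence of $A_n$ to $A$ on $\mathbb{I}$. Because each $A_n$ is convex and sandwiched between $\max\{1-t,t\}$ and $1$, and the limit $A$ is continuous on the compact interval $\mathbb{I}$, the standard fact that pointwise convergence of convex functions on a compact interval to a continuous convex limit is automatically uniform (see e.g.\ \cite{Pollard2001}) gives (ii).

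For (ii) $\Rightarrow$ (iii) I would use \eqref{eq:map_eq_pick_copula} together with the fact that $(x,y)\mapsto \log(x)/\log(xy)$ is continuous on $(0,1)^2$: uniform convergence $A_n \to A$ immediately yields pointwise convergence $C_n(x,y) \to C(x,y)$ on $(0,1)^2$, and since every copula agrees on the boundary $\partial \mathbb{I}^2$, pointwise convergence extends to all of $\mathbb{I}^2$. The bivariate copulas form a uniformly equicontinuous family on $\mathbb{I}^2$ (Lipschitz in each variable with constant $1$), so pointwise convergence on the compact set $\mathbb{I}^2$ upgrades to uniform convergence, giving (iii).

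For the closing implication (iii) $\Rightarrow$ (i), I would first invert \eqref{eq:map_eq_pick_copula}: for $(x,y) \in (0,1)^2$ we have $A_n(\log x / \log(xy)) = \log C_n(x,y) / \log(xy)$, and the right-hand side converges to $\log C(x,y) / \log(xy) = A(\log x/\log(xy))$ uniformly, since $\log C_n \to \log C$ uniformly on any set bounded away from the axes. Reparametrising via $t = \log x/ \log(xy)$ gives $A_n \to A$ uniformly on every compact subinterval of $(0,1)$, and continuity together with the fixed boundary values $A_n(0)=A_n(1)=1$ upgrades this to uniform convergence on $\mathbb{I}$. Now comes the main point: invoking the standard result that for convex functions uniform convergence implies pointwise convergence of the right-hand derivatives at every continuity point of the limiting right derivative $D^+A$ (see \cite{Kannan1996,Pollard2001}), and recalling from Lemma \ref{lem:der_pick_meas} that $D^+A_n(t) = 2 F_{\vartheta_n}(t) - 1$ and $D^+A(t) = 2 F_\vartheta(t) - 1$, I obtain $F_{\vartheta_n}(t) \to F_\vartheta(t)$ at every continuity point of $F_\vartheta$, which is exactly weak convergence $\vartheta_n \to \vartheta$.

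The technical heart of the argument is the last step: transferring uniform convergence of the primitives $A_n$ to pointwise convergence of their one-sided derivatives at continuity points of the limit derivative. This is classical for convex functions but must be invoked carefully, because in general the set of continuity points of $D^+A$ is only dense (and cocountable) in $(0,1)$, not all of $(0,1)$; fortunately this is exactly the set that appears in the Portmanteau characterisation of weak convergence via distribution functions, so everything matches up.
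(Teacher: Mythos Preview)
Your proof is correct and, at the level of ideas, close to the paper's: both use the integral formula \eqref{eq:formula_meas_pickands} together with dominated convergence for the direction (i) $\Rightarrow$ (ii), and both close the loop by passing from uniform convergence of the convex functions $A_n$ to convergence of the right derivatives $D^+A_n$ at continuity points of $D^+A$, and then to weak convergence of $\vartheta_n$ via Lemma~\ref{lem:der_pick_meas}. The organisational difference is that the paper does not prove (ii) $\Leftrightarrow$ (iii) at all---it simply invokes \cite[Theorem~5.1]{bernoulli} for this equivalence (and also for the derivative-convergence statement), reducing the lemma to the equivalence (i) $\Leftrightarrow$ (ii). Your cycle (i) $\Rightarrow$ (ii) $\Rightarrow$ (iii) $\Rightarrow$ (i) instead handles (ii) $\Rightarrow$ (iii) by the elementary equicontinuity argument and recovers $A_n\to A$ from $C_n\to C$ by inverting \eqref{eq:map_eq_pick_copula} (using that EVCs are PQD, hence bounded below by $xy>0$ on compacta in $(0,1)^2$, so $\log C_n\to\log C$ locally uniformly). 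The net effect is that your argument is self-contained and more elementary, while the paper's is shorter but leans on an external reference for both the copula-level equivalence and the derivative-convergence step; the underlying mechanisms are the same.
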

\begin{proof}
    Considering \cite[Theorem 5.1]{bernoulli} it suffices to show the equivalence of $(i)$ and $(ii)$. 
    The implication $(i) \Rightarrow (ii)$ is a direct consequence of Lemma \ref{lem:measure_pickands} and dominated convergence. On the other hand, again using \cite[Theorem 5.1]{bernoulli}, we have that 
    $D^+A_n(t) \overset{n\rightarrow \infty}{\longrightarrow}D^+A(t)$ for every continuity point 
    $t$ of $D^+A$, which, applying equation \eqref{eq:formula_meas_pickands} directly yields $(i)$.
\end{proof}
For establishing the regularity results for EVCs summarized in Theorem \ref{thm:evc_regularity_meas_cop} the following 
technical lemma will be used:
\begin{Lemma}\label{lem:help_regularity_measure}
    Let $F\colon \mathbb{I} \rightarrow \mathbb{I}$ be a strictly increasing, absolutely continuous distribution 
    function, $\mu_G$ be a finite measure on $\mathcal{B}(\mathbb{I})$ with measure-generating function $G$ 
    and fulfilling $\mu_G((0,1))>0$. Furthermore let $\mu_H$ be the measure induced by the measure-generating function $H\colon \mathbb{I} \rightarrow \mathbb{I}$ defined by $H := F \cdot G$. Then the following three assertions hold:
    \begin{itemize}
        \item[(i)] If $\mu_G$ is singular, then $\mu_H$ has non-degenerated singular component.
        \item[(ii)] If $\mu_G$ is discrete in $(0,1)$, then $\mu_H^{sing}(\mathbb{I}) = 0$.
        \item[(iii)] If $\mu_G$ has a point mass in $(0,1)$, then so does $\mu_H$.
    \end{itemize}
\end{Lemma}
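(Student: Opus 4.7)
The plan is to reduce all three claims to a single structural decomposition of $\mu_H$ obtained via Lebesgue--Stieltjes integration by parts. First I would apply integration by parts to the identity $H(b) - H(a) = F(b)G(b) - F(a)G(a)$ to obtain
$$\mu_H((a,b]) = \int_{(a,b]} F(x-)\, d\mu_G(x) + \int_{(a,b]} G(x)\, d\mu_F(x)$$
for every $0 \leq a < b \leq 1$, and then use continuity of $F$ (a consequence of its absolute continuity) to replace $F(x-)$ by $F(x)$. Since the half-open intervals $(a,b]$ generate $\mathcal{B}(\mathbb{I})$, this extends to
$$\mu_H = F \cdot \mu_G + G \cdot \mu_F,$$
where $F \cdot \mu_G$ (resp.\ $G \cdot \mu_F$) denotes the measure with density $F$ w.r.t.\ $\mu_G$ (resp.\ density $G$ w.r.t.\ $\mu_F$). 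Two observations then drive the rest of the argument: (a) $G \cdot \mu_F$ is absolutely continuous w.r.t.\ $\lambda$ (since $\mu_F$ is, with density $F'$), and (b) $F(x) > 0$ for every $x \in (0,1]$, because absolute continuity of $F$ forces $F(0) = 0$ and $F$ is strictly increasing.

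With this decomposition in hand, each of the three assertions reduces to inspecting $F \cdot \mu_G$. For (iii), continuity of $F$ gives $H(x_0) - H(x_0-) = F(x_0)(G(x_0) - G(x_0-))$, which by (b) is strictly positive whenever $\mu_G$ has a point mass at $x_0 \in (0,1)$. For (i), singularity of $\mu_G$ means that $\mu_G$ has no atoms and is concentrated on some Lebesgue-null set $N$; both properties transfer to $F \cdot \mu_G$, so $F \cdot \mu_G$ is singular and, combined with the absolute continuity of $G \cdot \mu_F$, this identifies $\mu_H^{sing}$ with $F \cdot \mu_G$. The hypothesis $\mu_G((0,1)) > 0$ together with (b) then yields $\mu_H^{sing}((0,1)) = \int_{(0,1)} F\, d\mu_G > 0$. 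For (ii), writing $\mu_G = \sum_i \alpha_i \delta_{x_i}$ with $x_i \in (0,1)$ gives $F \cdot \mu_G = \sum_i F(x_i)\alpha_i \delta_{x_i}$, which is discrete, so $\mu_H$ is the sum of a discrete and an absolutely continuous measure and hence has $\mu_H^{sing}(\mathbb{I}) = 0$.

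The main technical hurdle is the integration-by-parts step itself: one has to choose the convention that places the left-continuous version $F(x-)$ next to $d\mu_G$ so that continuity of $F$ can be exploited to obtain the clean splitting $\mu_H = F \cdot \mu_G + G \cdot \mu_F$ without any cross terms from simultaneous jumps. Once this is done, the rest is essentially bookkeeping on the Lebesgue decomposition of $F \cdot \mu_G$, powered by the strict positivity of $F$ on $(0,1]$.
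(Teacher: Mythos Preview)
Your proof is correct and takes a cleaner, more unified route than the paper. The paper handles the three cases separately and by hand: for (i) it computes $H' = fG$ $\lambda$-a.e.\ (using $G'=0$ a.e.\ by singularity) and then argues by contradiction that if $H$ were absolutely continuous with density $fG$ one would obtain the strict inequality $H(x_2)-H(x_1) < H(x_2)-H(x_1)$ on any interval $(x_1,x_2)\subseteq(0,1]$ where $G$ strictly increases; for (ii) it writes out $H(t)=\sum_i a_i F(t)\mathbf{1}_{[0,t]}(q_i)$, subtracts the jump part $\sum_i a_i F(q_i)\mathbf{1}_{[0,t]}(q_i)$, and verifies via Fubini that the remainder is an honest Lebesgue integral $\int_{[0,t]} f(s)\sum_{q_i<s}a_i\,\mathrm{d}\lambda(s)$; (iii) is the same one-line computation in both proofs. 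Your Stieltjes integration-by-parts decomposition $\mu_H = F\cdot\mu_G + G\cdot\mu_F$ does all three at once: since $G\cdot\mu_F \ll \lambda$, the Lebesgue decomposition of $\mu_H$ is read off directly from that of $F\cdot\mu_G$, which in turn inherits its type (singular, discrete, atomic) from $\mu_G$ because $F>0$ on $(0,1]$. The gain is conceptual transparency and brevity; the paper's approach trades this for self-containedness, avoiding any appeal to the general Lebesgue--Stieltjes integration-by-parts formula. One small wrinkle: in (ii) you place the atoms in $(0,1)$, whereas the hypothesis (and the paper, which takes $q_i\in[0,1)$) allows atoms at the endpoints---harmless, since an atom at $0$ is annihilated by $F(0)=0$ and an atom at $1$ simply contributes another point mass to $F\cdot\mu_G$.
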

\begin{proof}
  (i) Continuity of $H$ implies that $H$ is continuous on $\mathbb{I}$. 
  Letting $f$ denote the density of $F$ and considering the derivative of $H$ yields
  $$
  H' = F'G + FG' = fG + FG'
  $$
  $\lambda$-almost everywhere on $\mathbb{I}$. Using singularity of $G$ therefore implies $H' = fG$ $\lambda$-almost everywhere on $\mathbb{I}$.\\
  If $H$ had no singular component, it would be absolutely continuous with density $f \cdot G$.
  Therefore, considering an interval $(x_1,x_2) \subseteq (0,1]$ with $G(x_1) < G (x_2)$ and using the fact 
  that $F$ is strictly increasing and greater than $0$ on $(0,1]$, it follows that
    \begin{align*}
        H(x_2) - H(x_1)  &= \int_{[x_1,x_2]}f(s)G(s) \mathrm{d}\lambda(s) \\&\leq G(x_2)[F(x_1) - F(x_2)] \\& <
        G(x_2) F(x_2) - G(x_1)F(x_1) \\&=
        H(x_2) - H(x_1),
    \end{align*}
   a contradiction, so $H$ has non-degenerated singular component.\\
(ii) Assume that $\mu_G$ is discrete. Then there exists a (finite or countably infinite) index-set $I$ and
 numbers $q_i\in [0,1)$ and $a_i \in \mathbb{I}$ for every $i \in I$, such 
 that $\mu_G = \sum_{i \in I}a_i \delta_{q_i}$ holds. Then obviously $H$ is given by
$$
H(t)=F(t)G(t) = \sum_{i \in I}a_i F(t) \mathbf{1}_{[0,t]}(q_i)
$$
for every $t \in (0,1)$. 
$H$ is continuous outside the set $\{q_i: i \in I\}$, so the discrete component $H^{dis}$ of $H$ is given by $H^{dis}(t) =  \sum_{i \in I}a_i F(q_i) \delta_{q_i}([0,t])$. Defining $\psi(t) := H(t)- H^{dis}(t)$ and letting $f$ again 
denote the density of $F$, working with the definition of $\psi$ and applying Fubini's theorem for non-negative measurable functions, we obtain that
\begin{align*}
\psi(t) &= \sum_{i \in I}a_i[F(t)-F(q_i)] \delta_{q_i}([0,t]) \\ & = 
\sum_{i \in I}a_i\int_{(q_i,t]}f(s) \mathrm{d}\lambda(s) \delta_{q_i}([0,t]) \\ & = 
\int_{\mathbb{I}}\sum_{i \in I}a_i f(s)\mathbf{1}_{(q_i,t]}(s) \delta_{q_i}([0,t]) \mathrm{d}\lambda(s) \\ & =
\int_{[0,t]}f(s)\sum_{q_i < s}a_i \mathrm{d}\lambda(s).
\end{align*}
Observing that, defining $g(s) := f(s)\sum_{q_i < s}a_i \geq 0$ for every $s \in \mathbb{I}$, we obviously have
that $g\in L^1(\mathbb{I},\mathcal{B}(\mathbb{I}),\lambda)$ it follows that $\psi$ is absolutely continuous
on $\mathbb{I}$. In other words: the singular component of $H$ is degenerated.\\
(iii) The third assertion directly follows form the fact that in the case of $x_0$ being a point mass of $\mu_G$ we have 
$$
\mu_H(\{x_0\}) = F(x_0)\cdot\mu_G(\{x_0\}) >0.
$$
\end{proof}
\begin{Lemma}\label{lem:conv_semi-pickands}
    Let $\nu$ be a measure on $\mathcal{B}(\mathbb{I})$ with $\nu(\mathbb{I}) \leq 1$. Then the function $g_\nu \colon \mathbb{I} \rightarrow [0,\infty)$ defined by
    $$
    g_\nu(t) := 1-t + 2\int_{[0,t]}\nu([0,z]) \mathrm{d}\lambda(z)
    $$
    for all $t \in \mathbb{I}$ is convex and $1$-Lipschitz continuous.
\end{Lemma}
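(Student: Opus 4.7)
The plan is to write $F_\nu(z):=\nu([0,z])$ for the distribution function associated with $\nu$ and exploit two facts about it: $F_\nu$ takes values in $[0,\nu(\mathbb{I})]\subseteq[0,1]$, and $F_\nu$ is non-decreasing and right-continuous on $\mathbb{I}$. Rewriting $g_\nu(t)=1-t+2\int_{[0,t]}F_\nu(z)\,\mathrm{d}\lambda(z)$, both assertions then reduce to standard properties of antiderivatives of bounded monotone functions.

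For the $1$-Lipschitz property I would fix $0\le s\le t\le 1$ and compute directly
\[
g_\nu(t)-g_\nu(s)=-(t-s)+2\int_{[s,t]}F_\nu(z)\,\mathrm{d}\lambda(z).
\]
Since $0\le F_\nu\le 1$ pointwise, the integral lies in $[0,t-s]$, so the right-hand side lies in $[-(t-s),\,t-s]$, which yields $|g_\nu(t)-g_\nu(s)|\le t-s$. Note that the bound $\nu(\mathbb{I})\le 1$ is used precisely to get $F_\nu\le 1$; otherwise the constant would be $2\nu(\mathbb{I})-1$ instead of $1$.

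For convexity I would observe that $t\mapsto 1-t$ is affine, hence convex, so it suffices to show that $\Phi(t):=2\int_{[0,t]}F_\nu(z)\,\mathrm{d}\lambda(z)$ is convex. The cleanest way is to verify that $\Phi$ has a non-decreasing right-derivative: by Lebesgue differentiation (applied at every right-Lebesgue point of $F_\nu$, which includes every right-continuity point), and combined with monotonicity of $F_\nu$, one obtains $D^{+}\Phi(t)=2F_\nu(t)$, which is non-decreasing on $[0,1)$. A continuous function on an interval whose right-derivative exists everywhere and is non-decreasing is convex (standard real analysis). Alternatively, and perhaps cleaner for the paper's style, I would argue directly: for $0\le t_1<t_2\le 1$ and $\alpha\in(0,1)$ set $t_\alpha:=\alpha t_1+(1-\alpha)t_2$ and compute
\[
\alpha\,\Phi(t_1)+(1-\alpha)\,\Phi(t_2)-\Phi(t_\alpha)=2\Bigl((1-\alpha)\!\int_{[t_\alpha,t_2]}\!F_\nu\,\mathrm{d}\lambda-\alpha\!\int_{[t_1,t_\alpha]}\!F_\nu\,\mathrm{d}\lambda\Bigr),
\]
and use monotonicity of $F_\nu$ together with $t_\alpha-t_1=(1-\alpha)(t_2-t_1)$ and $t_2-t_\alpha=\alpha(t_2-t_1)$ to bound the first integral below by $\alpha(1-\alpha)(t_2-t_1)F_\nu(t_\alpha)$ and the second above by the same quantity, so the difference is non-negative.

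There is no real obstacle here; the lemma is a purely technical ingredient. The only subtlety worth being careful about is the correct Lipschitz constant, where the hypothesis $\nu(\mathbb{I})\le 1$ must be invoked to conclude $F_\nu\le 1$ (so that cancellation with the $-(t-s)$ term actually produces the constant $1$ rather than something larger).
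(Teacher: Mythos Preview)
Your proof is correct and follows essentially the same approach as the paper: for the Lipschitz bound the paper packages the argument via the density $\mathcal{k}_\nu(t)=-1+2\nu([0,t])\in[-1,1]$ rather than splitting off the $-(t-s)$ term, and for convexity the paper simply asserts (without your extra justification) that the antiderivative of the non-decreasing function $z\mapsto\nu([0,z])$ is convex and then uses that a sum of convex functions is convex.
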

\begin{proof}
The function $g_\nu$ is absolutely continuous by definition and (one version of) its density $\mathcal{k}_\nu \colon \mathbb{I} \rightarrow [-1,1]$ is given by $\mathcal{k}_\nu(t) = -1 + 2\nu([0,t]) \in [-1,1]$, implying that
$$
|g_\nu(t_1) - g_\nu(t_2)| \leq \int_{[x_1,x_2]} |\mathcal{k}_\nu(s)| \mathrm{d}\lambda(s) \leq |t_1-t_2|
$$
for every $t_1,t_2 \in \mathbb{I}$, whereby $x_1 = \min\{t_1,t_2\}$ and $x_2 = \max\{t_1,t_2\}$. In other words: 
$g_\nu$ is $1$-Lipschitz continuous.
Working with the fact that $z \mapsto \nu([0,z])$ is non-decreasing, the function $t \mapsto \int_{[0,t]}\nu([0,z])\mathrm{d}\lambda(z)$ is convex and hence, $g_\nu$ is as sum of two convex functions convex as well.
\end{proof}
The following lemma generalizes Lemma 5 in \cite{evc-mass}.
\begin{Lemma}\label{lem:conv_G}
    Let $f\colon \mathbb{I} \rightarrow [0,\infty)$ be a convex function fulfilling $D^+f(0) \geq -1$ and $f(0) = 1$. Then the function $G_f \colon [0,1) \rightarrow \mathbb{R}$, defined by
    $$
    G_f(t) := D^+f(t)(1-t) + f(t)
    $$
    is non-decreasing, non-negative and right-continuous.
\end{Lemma}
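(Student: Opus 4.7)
My plan is to extract three preparatory facts from convexity and then verify each of the three conclusions in turn. The main work is really up front, in those preliminaries, after which every assertion reduces to a short calculation.

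First I would argue that, since $f$ is convex on $[0,1]$, the right derivative $D^+f$ is non-decreasing and right-continuous on $[0,1)$, so the hypothesis $D^+f(0)\geq -1$ upgrades to $D^+f(t)\geq -1$ for every $t\in[0,1)$. Next, using the standard representation $f(t)=f(0)+\int_{[0,t]}D^+f(s)\,d\lambda(s)=1+\int_{[0,t]}D^+f(s)\,d\lambda(s)$ for convex functions, the pointwise lower bound $f(t)\geq 1-t$ on $[0,1)$ follows. Finally, the assumed finiteness of $D^+f(0)$ forces $\lim_{t\downarrow 0}f(t)=f(0)=1$: convexity gives $f(0^+)\leq f(0)$, and a strict inequality $f(0^+)<1$ would drive the right-hand difference quotient $(f(h)-1)/h$ to $-\infty$ and contradict $D^+f(0)\geq -1$.

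With these in hand the three conclusions drop out quickly. For non-negativity I would simply plug the two pointwise bounds into the definition to get $G_f(t)=(1-t)D^+f(t)+f(t)\geq -(1-t)+(1-t)=0$. For monotonicity, given $0\leq t_1<t_2<1$, I would use the supporting-line inequality $f(t_2)-f(t_1)\geq D^+f(t_1)(t_2-t_1)$ inside the telescoped difference
$$G_f(t_2)-G_f(t_1)=(1-t_2)D^+f(t_2)-(1-t_1)D^+f(t_1)+[f(t_2)-f(t_1)],$$
observe that the $D^+f(t_1)$ contributions collapse via the identity $-(1-t_1)+(t_2-t_1)=-(1-t_2)$, and read off
$$G_f(t_2)-G_f(t_1)\geq (1-t_2)\bigl[D^+f(t_2)-D^+f(t_1)\bigr]\geq 0,$$
since $D^+f$ is non-decreasing and $1-t_2>0$. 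For right-continuity at $t\in(0,1)$ I would invoke continuity of the convex function $f$ on the open interval together with right-continuity of $D^+f$ and of $t\mapsto 1-t$; at $t=0$ right-continuity reduces exactly to the identity $f(0^+)=f(0)$ already secured in the preliminaries.

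The main obstacle is really only conceptual: non-negativity of $G_f$ looks as though it should hinge on a global constraint linking $f(0)$ and $f(1)$ (as in the Pickands setting, where $A(1)=1$ closes the circle), but the one-sided hypothesis $D^+f(0)\geq -1$ turns out to be exactly the right substitute, as it simultaneously delivers both of the pointwise bounds $D^+f\geq -1$ and $f(t)\geq 1-t$ that make the estimate pinch to zero. Once this is recognized, the remainder is essentially bookkeeping.
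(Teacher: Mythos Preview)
Your proof is correct and follows essentially the same route as the paper: the monotonicity step is identical (supporting-line inequality for $f$ plus monotonicity of $D^+f$), and right-continuity is handled the same way, with your treatment of the endpoint $t=0$ being slightly more careful than the paper's. The only minor difference is in the non-negativity argument: you obtain it directly from the pointwise bounds $D^+f(t)\geq -1$ and $f(t)\geq 1-t$, whereas the paper first establishes monotonicity and then simply reads off $G_f(t)\geq G_f(0)=1+D^+f(0)\geq 0$.
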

\begin{proof}
     Convexity of $f$ yields that $f$ is continuous and that $D^+f$ is right-continuous 
     (see \cite{Kannan1996,Pollard2001}), hence right-continuity of $G_f$ follows.
     For $0 \leq t_1 < t_2 <1$ convexity of $f$ implies $D^+f(t_1) \leq D^+f(t_2)$. 
     Therefore, setting $\delta := D^+f(t_2)(1-t_2) + f(t_2) - (D^+f(t_1)(1-t_1) + f(t_1))$, we obtain that
     \begin{align*}
     \delta &\geq D^+f(t_2)(1-t_2) + f(t_1) + D^+f(t_1)(t_2-t_1) - (D^+f(t_1)(1-t_1) + f(t_1)) \\&\geq
      D^+f(t_1)(1-t_2) + f(t_1) + D^+f(t_1)(t_2-t_1) - (D^+f(t_1)(1-t_1) + f(t_1)) = 0,
    \end{align*}
    implying that $G_f$ is non-decreasing. Finally, non-negativity follows via $G_f(t) \geq G_f(0) = D^+f(0)(1-0) + f(0) = 1 + D^+f(0) \geq 0$.
\end{proof}

\noindent \textbf{Proof of Theorem 4.6:}
\begin{proof}
    (i): If $\vartheta$ is absolutely continuous on $(0,1)$, applying equation \eqref{eq:der_pick_meas} yields that 
    $D^+A$ is absolutely continuous too. Since $A$ is Lipschitz-continuous and finite sums of absolutely continuous functions are absolutely continuous, $G_A$ is absolutely continuous on $[0,1)$, i.e., there exists some function 
    $g \in L^1(\mathbb{I},\mathcal{B}(\mathbb{I}),\lambda)$, such that
    $$
    G_A(t) = G_A(0) + \int_{[0,t]} g(s) \mathrm{d}\lambda(s),
    $$
    holds for every $t \in [0,1)$. Using the fact that for fixed $x \in (0,1)$ the mapping
     $\varphi_x: y \mapsto \frac{\log(x)}{\log(xy)}$ is a diffeomorphism of $(0,1)$, applying change of coordinates yields that
    $$
    G_A\left(\varphi_x(y)\right) = G_A(0) + \int_{[0,y]} g(\varphi_x(s))\left|\frac{\log(x)}{s\log^2(xs)}\right| \mathrm{d}\lambda(s),
    $$
  implying that $y \mapsto G_A\left(\varphi_x(y)\right)$ is absolutely continuous on $[0,1)$. 
  Since $C_A$ is Lipschitz-continuous and products of absolutely continuous functions on compact 
  intervals   are absolutely continuous again (see \cite[Section 7.5, Exercise 7]{gariepy})
  the function $y \mapsto K_A(x,[0,y])$ according to equation \eqref{eq:ev_markov_kernel} is absolutely continuous 
  on $(0,1)$. Therefore, applying disintegration shows that $C_A$ is absolutely continuous.\\
  Considering the reverse implication, we know that there exists some Borel set $\Lambda \subseteq (0,1)$ with 
  $\lambda(\Lambda) = 1$ such that for every $x \in \Lambda$ the measure $K_A(x,\cdot)$ is absolutely continuous. 
  Fixing $x \in \Lambda$, using that $y \mapsto G_A\left(\varphi_x(y)\right)$ induces a probability measure and $y \mapsto \frac{C(x,y)}{x}$ is a strictly increasing absolutely continuous distribution function, 
  $y \mapsto G_A\left(\varphi_x(y) \right)$ is absolutely continuous. In fact, if this was not the case, then 
  $y \mapsto G_A\left(\varphi_x(y)\right)$ would have a discrete or singular component and thus, using 
  the fact that $K_A(x,\cdot)$ is absolutely continuous together with Lemma \ref{lem:help_regularity_measure} would yield a contradiction. 
   Proceeding as in the previous case, applying change of coordinates shows 
   absolute continuity of $G_A$ and therefore, considering that the function $A$ is Lipschitz-continuous and 
   $t \mapsto 1-t$ is smooth, absolute continuity of $D^+A$ follows. Applying equation \eqref{eq:der_pick_meas} now 
   completes the proof of the first assertion.\\
(ii) The second assertion has already been proved in Lemma \ref{lem:mass_graph}.\\
(iii) For proving assertion number three assume that $\vartheta \in \mathcal{P}_\mathcal{A}$ has a singular component, 
i.e., $\vartheta = \vartheta^{sing} + \mu$ with $\vartheta^{sing}(\mathbb{I}) > 0$ and $\mu$ being the non-singular 
component. Working with equation \eqref{eq:ev_markov_kernel} for fixed $x\in (0,1)$ yields 
$$
K_C(x,[0,y])= \frac{C(x,y)}{x}\left[G_A^{sing}\left(\frac{\log(x)}{\log(xy)}\right) + G_A^\mu\left(\frac{\log(x)}{\log(xy)}\right)\right],
$$
where $y \in (0,1)$, $G_A^{i}(t) =D^+A_i(t)(1-t) + A_i\left(t\right)$, $i \in \{sing,\mu\}$, and $A_i$ 
according to equation \eqref{eq:formula_meas_pickands} with the measures $\vartheta_{sing}$ and $\mu$ being 
used instead of $\vartheta$. Applying Lemma \ref{lem:conv_semi-pickands} each $A_i$ is convex and thus, using Lemma \ref{lem:conv_G}, $G_A^i$ are non-negative, non-decreasing and right-continuous. Furthermore applying that 
$y \mapsto \varphi_x(y)$ is non-decreasing and right-continuous as well, $G_A^i$ induces a measure for $i \in \{sing,\mu\}$. Moreover, using that $\varphi_x$ is continuous, strictly increasing and that its derivative is bounded from 
below by $\ell = -\frac{e^2}{4}x\log(x) > 0$ (see \cite[Example 4]{evc-mass}), yields that 
$\frac{\partial}{\partial y}G_A^{sing}\left( \varphi_x(y)\right) = 0$ for $\lambda$-almost every $y \in (0,1)$, implying
that $y \mapsto G_A^{sing}\left(\varphi_x(y)\right)$ is continuous and singular.
 Applying Lemma \ref{lem:help_regularity_measure} yields that 
 $y \mapsto \frac{C(x,y)}{x}\,G_A^{sing}\left( \varphi_x(y)\right)$ has a non-degenerated singular component and therefore $K_C^{sing}(x,\mathbb{I}) > 0$. Applying disintegration yields the desired result. \\
To prove the reverse implication assume that $\vartheta^{sing}(\mathbb{I})=0$, i.e., $\vartheta = \vartheta^{dis} + \vartheta^{abs}$. Let $x \in (0,1)$ be arbitrary but fixed. Again working with equation 
\eqref{eq:ev_markov_kernel} yields that
$$
K_C(x,[0,y]) = \frac{C(x,y)}{x}\left[G_A^{abs}\left(\frac{\log(x)}{\log(xy)}\right) + G_A^{dis}\left(\frac{\log(x)}{\log(xy)}\right)\right],
$$
where $G_A^{abs}$ and $G_A^{dis}$ are as in the previous case only for the absolutely continuous and 
discrete components of $\vartheta$, respectively. 
Using the same arguments as in the proof of the first assertion shows that 
$y \mapsto \frac{C(x,y)}{x}G_A^{abs}\left(\varphi_x(y)\right)$ is absolutely continuous on $(0,1)$. 
Considering $G_A^{dis}$, notice that according to Lemma \ref{lem:conv_semi-pickands} $A^{dis}$ is Lipschitz-continuous 
and, since $D^+A^{dis}$ is discrete and $t \mapsto (1-t)$ is smooth, $G_A^{dis}$ has no singular component. 
If $y \mapsto G_A^{dis}\left(\frac{\log(x)}{\log(xy)}\right) =: Q(y)$ had a non-degenerated singular component, i.e., $\mu_Q = \mu_Q^r + \mu_Q^s$, whereby $\mu_Q$ denotes the measure induced by $Q$ and $\mu_Q^s$ the non-degenerated singular 
component of $\mu_Q$ and $\mu_Q^r$ summarizes the absolutely continuous and discrete component. Denote
 the measure generating functions of $\mu_Q^s$ and $\mu_Q^r$ by $Q^s$ and $Q^r$, respectively. Then
$$
G_A^{dis}\left(t\right) = G_A^{dis}\left(\frac{\log(x)}{\log(xx^{\frac{1}{t}-1})}\right) = Q(x^{\frac{1}{t}-1}) = Q^r(x^{\frac{1}{t}-1}) + Q^s(x^{\frac{1}{t}-1})
$$
and the function $k_x(t) := x^{\frac{1}{t}-1}$ is differentiable for every $t \in (0,1)$. 
Applying \cite[Lemma 7.25]{rudin1974} yields that $k_x$ maps sets of full $\lambda$-measure to sets of full $\lambda$-measure.
Applying the obvious fact that $t \mapsto Q^s(x^{\frac{1}{t}-1})$ is continuous and that
$$\frac{\mathrm{d}}{\mathrm{d}t}Q^s(x^{\frac{1}{t}-1}) = (Q^s)'(x^{\frac{1}{t}-1})\frac{x^{\frac{1}{t}-1}}{t^2\log(x)} = 0$$
for $\lambda$-almost every $t \in \mathbb{I}$, the measure induced by $G_A^{dis}$ would have a non-degenerated singular component; a contradiction. 
Therefore $y \mapsto G_A^{dis}\left(\varphi_x(y)\right)$ has degenerated singular component and thus, multiplying 
$y \mapsto G_A^{dis}\left(\varphi_x(y) \right)$ with the strictly increasing, absolutely continuous distribution 
function $y \mapsto \frac{C(x,y)}{x}$ and working with the fact that 
$y \mapsto G_A^{dis}\left(\varphi_x(y)\right)$ only has non-degenerated absolutely continuous and 
discrete component, proceeding analogously as in the proof of the first part of the Lemma and applying Lemma \ref{lem:help_regularity_measure} $(ii)$ completes the proof of the last assertion.
\end{proof}

The next lemma shows that we can normalize sequences of weakly converging probability measures with limit 
in $\mathcal{P}_\mathcal{A}$ in such a way that all elements of the sequence are in 
$\mathcal{P}_\mathcal{A}$ while preserving weak convergence to the limit.
\begin{Lemma}\label{lem:normalizing_pickands}
Let $\mu_1,\mu_2,... \in\mathcal{P}(\mathbb{I})$ and $\vartheta \in \mathcal{P}_\mathcal{A}$ fulfill that the sequence
$(\mu_n)_{n \in \mathbb{N}}$ converges weakly to $\vartheta$. 
Then there exist constants $\alpha_1,\alpha_2,... \in (0,1)$ and $\beta_1,\beta_2,... \in (0,1)$ such that the measures
\begin{equation}\label{eq:norm_theta}
   \vartheta_n := \begin{cases}
    (1-\alpha_n)\delta_0+ \alpha_n\mu_n, &\text{ if } \mathbb{E}(\mu_n) > \frac{1}{2}\\
    \mu_n,  &\text{ if } \mathbb{E}(\mu_n) = \frac{1}{2}\\
    (1-\beta_n)\delta_1 + \beta_n\mu_n, &\text{ if } \mathbb{E}(\mu_n) < \frac{1}{2}
\end{cases} 
\end{equation}
fulfill the following assertions:
\begin{itemize}
    \item[(i)] $\vartheta_n \in \mathcal{P}_\mathcal{A}$ for every $n \in \mathbb{N}$.
    \item[(ii)] The sequence $(\vartheta_n)_{n \in \mathbb{N}}$ converges weakly to $\vartheta$.
\end{itemize}
\end{Lemma}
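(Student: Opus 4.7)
The strategy is to write down the constants $\alpha_n, \beta_n$ explicitly so that each $\vartheta_n$ has mean exactly $\tfrac{1}{2}$, and then observe that weak convergence $\mu_n \to \vartheta$ forces $\alpha_n, \beta_n \to 1$, after which (ii) follows by a one-line estimate. Concretely, I would set
\[
\alpha_n := \frac{1}{2\,\mathbb{E}(\mu_n)} \qquad \text{and} \qquad \beta_n := \frac{1}{2\,(1-\mathbb{E}(\mu_n))}.
\]
A direct computation gives $\mathbb{E}(\vartheta_n) = \alpha_n \mathbb{E}(\mu_n) = \tfrac{1}{2}$ in the first branch and $\mathbb{E}(\vartheta_n) = (1-\beta_n) + \beta_n \mathbb{E}(\mu_n) = \tfrac{1}{2}$ in the third. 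Since $\mu_n$ is supported on $\mathbb{I}$ we have $\mathbb{E}(\mu_n) \in [0,1]$, so $\alpha_n \in [\tfrac{1}{2},1)$ in the first branch and $\beta_n \in [\tfrac{1}{2},1)$ in the third; both are well-defined elements of $(0,1)$, and $\vartheta_n$ is a convex combination of probability measures on $\mathbb{I}$, hence a probability measure. Together with the mean condition this proves (i).

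For (ii), the key observation is that $x\mapsto x$ is bounded and continuous on $\mathbb{I}$, so weak convergence of $\mu_n$ to $\vartheta$ combined with $\mathbb{E}(\vartheta) = \tfrac{1}{2}$ yields $\mathbb{E}(\mu_n) \to \tfrac{1}{2}$, and therefore both $\alpha_n \to 1$ (when defined) and $\beta_n \to 1$ (when defined). For an arbitrary bounded continuous $f \colon \mathbb{I} \rightarrow \mathbb{R}$, writing $\vartheta_n = (1-\gamma_n)\delta_{c_n} + \gamma_n \mu_n$ with $c_n \in \{0,1\}$ and $\gamma_n \in \{\alpha_n,\beta_n,1\}$, one has
\[
\int f\, d\vartheta_n - \int f\, d\vartheta = (1-\gamma_n)\!\left(f(c_n) - \int f\, d\vartheta\right) + \gamma_n \!\left(\int f\, d\mu_n - \int f\, d\vartheta\right).
\]
The first summand is bounded by $2(1-\gamma_n)\|f\|_\infty \to 0$, the second tends to $0$ because $\mu_n \to \vartheta$ weakly, so $\int f\, d\vartheta_n \to \int f\, d\vartheta$ and the Portmanteau theorem yields $\vartheta_n \to \vartheta$ weakly.

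There is no real obstacle here; the only subtlety is that the three branches in the definition of $\vartheta_n$ may be traversed infinitely often along the sequence, so one must not argue along a fixed branch. The display above handles this uniformly by introducing the generic pair $(\gamma_n,c_n)$ (with the convention that $\gamma_n=1$ and any $c_n$ in the middle branch), which makes the limit argument branch-free.
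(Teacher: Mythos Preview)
Your proof is correct and takes essentially the same approach as the paper: you choose the identical constants $\alpha_n = \tfrac{1}{2\mathbb{E}(\mu_n)}$ and $\beta_n = \tfrac{1}{2(1-\mathbb{E}(\mu_n))}$, verify the mean condition directly, and use $\mathbb{E}(\mu_n) \to \tfrac{1}{2}$ to get $\gamma_n \to 1$. The only difference is cosmetic---the paper splits into three index sets $I^>, I^<, I^=$ and argues along each subsequence separately, whereas your unified $(\gamma_n, c_n)$ notation handles all branches at once and is arguably cleaner.
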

\begin{proof}
For a given $\mu_n$ set $\alpha_n := \frac{1}{2\mathbb{E}(\mu_n)}$ if $\mathbb{E}(\mu_n) > \frac{1}{2}$, 
$\beta_n := \frac{1}{2(1-\mathbb{E}(\mu_n))}$ if $\mathbb{E}(\mu_n) < \frac{1}{2}$ and define $\vartheta_n$ 
according to equation \eqref{eq:norm_theta}.
Then for $\mathbb{E}(\mu_n) > \frac{1}{2}$ we have that
$$
\mathbb{E}(\vartheta_n) = (1-\alpha_n) \mathbb{E}(\zeta_1) + \alpha_n\mathbb{E}(\mu_n) = \frac{\mathbb{E}(\mu_n)}{2\mathbb{E}(\mu_n)} = \frac{1}{2}
$$
and in the case $\mathbb{E}(\mu_n) < \frac{1}{2}$, we obtain that
$$
\int_\mathbb{I} s\mathrm{d}\vartheta_n(s) = (1-\beta_n) + \beta_n\mathbb{E}(\mu_n) =
\left(1-\frac{1}{2(1-\mathbb{E}(\mu_n))}\right) + \frac{\mathbb{E}(\mu_n)}{2(1-\mathbb{E}(\mu_n))} = 
\frac{1}{2}.
$$
This already proves the first assertion.\\
To show the second one we proceed as follows: Defining 
$$
I^>:=\{n \in \mathbb{N}: \mathbb{E}(\mu_n)>1/2\},
I^<:=\{n \in \mathbb{N}: \mathbb{E}(\mu_n)<1/2\},
I^=:=\{n \in \mathbb{N}: \mathbb{E}(\mu_n)=1/2\}
$$
we obviously have $I^> \cup I^< \cup I^==\mathbb{N}$.
If $I^>$ contains infinitely many elements, increasingly enumerating its elements by $i^>_1,i^>_2,\ldots$, it follows immediately that $(\alpha_{i^>_j})_{j \in \mathbb{N}}$ converges to $1$. Furthermore weak convergence implies that
\begin{align*}
\lim_{j \rightarrow \infty}\int_\mathbb{I}h(x) \mathrm{d}\vartheta_{i^>_j}(x) &= \lim_{j \rightarrow \infty}\left[(1-\alpha_{i^>_j})\int_\mathbb{I}h(x) \mathrm{d}\delta_0(x) + \alpha_{i^>_j}\int_\mathbb{I}h(x) \mathrm{d}\mu_{i^>_j}(x)\right] \\&= \lim_{j \rightarrow \infty}\left[(1-\alpha_{i^>_j})h(0) + \alpha_{i^>_j}\int_\mathbb{I}h(x) \mathrm{d}\mu_{i^>_j}(x)\right] \\&=  \int_\mathbb{I}h(x) \mathrm{d}\vartheta(x)
\end{align*}
holds for every continuous function $h \colon \mathbb{I} \rightarrow \mathbb{R}$.
For the case that $I^<$ is infinite we proceed analogously to show that $(\beta_{i^<_j})_{j \in \mathbb{N}}$ converges to $1$ and that $(\vartheta_{i^<_j})_{j \in \mathbb{N}}$ converges weakly to $\vartheta$.
Since for $n \in I^=$ we have $\vartheta_n=\mu_n$ it altogether follows that the sequence $(\vartheta_n)_{n \in \mathbb{N}}$ converges weakly to $\vartheta$, and the proof is complete.
\end{proof}
The next lemma constitutes that both, the family of Pickands dependence measures whose discrete component has full support as well as the family of Pickands dependence measures whose singular component has full support are dense in $\mathcal{P}_\mathcal{A}$.
\begin{Lemma} \label{lem:ev-measure-approximaton}
The following assertions hold:
\begin{itemize}
    \item[(i)] The family of $\vartheta \in \mathcal{P}_\mathcal{A}$ with $\mathrm{supp}(\vartheta^{dis}) = \mathbb{I}$ is dense in $(\mathcal{P}_\mathcal{A},\tau_w)$.
    \item[(ii)] The family of $\vartheta \in \mathcal{P}_\mathcal{A}$ with $\mathrm{supp}(\vartheta^{sing}) = \mathbb{I}$ is dense in $(\mathcal{P}_\mathcal{A},\tau_w)$.
\end{itemize}
\end{Lemma}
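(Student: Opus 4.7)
My plan to prove Lemma \ref{lem:ev-measure-approximaton} is to combine approximation by mixtures that inject ``background mass'' of the desired Lebesgue-decomposition type with the normalization step from Lemma \ref{lem:normalizing_pickands}. Throughout I fix $\vartheta \in \mathcal{P}_\mathcal{A}$ and an enumeration $\{q_k\}_{k \in \mathbb{N}}$ of a countable dense subset of $\mathbb{I}$.

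\textbf{Approach for (i).} First I would weakly approximate $\vartheta$ by a sequence $(\vartheta_n^{(0)})_{n \in \mathbb{N}}$ of discrete probability measures with finitely many atoms; the standard dyadic discretization $\vartheta_n^{(0)} := \sum_{k=0}^{n-1}\vartheta([k/n,(k+1)/n))\,\delta_{k/n} + \vartheta(\{1\})\,\delta_1$ does the job. Setting $\eta := \sum_{k \in \mathbb{N}} 2^{-k}\,\delta_{q_k}$ produces a purely discrete probability measure with $\mathrm{supp}(\eta) = \mathbb{I}$. The mixture $\mu_n := (1-\tfrac{1}{n})\,\vartheta_n^{(0)} + \tfrac{1}{n}\,\eta$ is then purely discrete with $\mathrm{supp}(\mu_n) = \mathbb{I}$, because the atoms of $\eta$ are contained in the support of $\mu_n$ and are dense. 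Since $\vartheta_n^{(0)} \to \vartheta$ weakly and $\tfrac{1}{n}\eta$ vanishes in total variation, we get $\mu_n \to \vartheta$ weakly. Applying Lemma \ref{lem:normalizing_pickands} then delivers a sequence $(\vartheta_n)_{n \in \mathbb{N}}$ in $\mathcal{P}_\mathcal{A}$ with $\vartheta_n \to \vartheta$ weakly; because the normalization only possibly inserts an atom at $0$ or $1$ with weight in $(0,1)$, each $\vartheta_n$ remains purely discrete and $\mathrm{supp}(\vartheta_n^{dis}) \supseteq \{q_k : k \in \mathbb{N}\}$ stays dense, which gives (i).

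\textbf{Approach for (ii).} I would follow the identical template but replace $\eta$ by a continuous singular probability measure $\sigma \in \mathcal{P}(\mathbb{I})$ with $\mathrm{supp}(\sigma) = \mathbb{I}$. An explicit such $\sigma$ is $\sigma := \sum_{k \in \mathbb{N}} 2^{-k}\,\sigma_k$, where each $\sigma_k$ is a rescaled and translated Cantor measure concentrated in a small neighborhood of $q_k$; since each $\sigma_k$ is atomless and supported on a $\lambda$-null set $N_k$, the countable convex combination $\sigma$ is atomless (as $\sigma(\{x\}) = \sum_k 2^{-k}\sigma_k(\{x\}) = 0$) and supported on the $\lambda$-null union $\bigcup_k N_k$, while $\mathrm{supp}(\sigma) \supseteq \overline{\{q_k\}} = \mathbb{I}$. (Alternatively, one may cite the existence of a continuous, strictly increasing singular distribution function on $\mathbb{I}$ along the lines of \cite{Sanchez2012}.) For $\mu_n := (1-\tfrac{1}{n})\vartheta + \tfrac{1}{n}\sigma$ the uniqueness of Lebesgue's decomposition yields $\mu_n^{sing} = (1-\tfrac{1}{n})\vartheta^{sing} + \tfrac{1}{n}\sigma$, whose support is $\mathbb{I}$. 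Again $\mu_n \to \vartheta$ weakly, and one final application of Lemma \ref{lem:normalizing_pickands} produces $\vartheta_n \in \mathcal{P}_\mathcal{A}$ with $\vartheta_n \to \vartheta$ weakly; since the normalization only modifies the discrete component at the endpoints, $\vartheta_n^{sing}$ is merely $\mu_n^{sing}$ rescaled by a factor in $(0,1)$ and therefore retains full support.

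\textbf{Main obstacle.} The only step that requires genuine work is providing the continuous singular reference measure $\sigma$ of full support in (ii); everything else reduces to standard facts, namely the denseness of finitely supported discrete measures in $(\mathcal{P}(\mathbb{I}),\tau_w)$, uniqueness of the Lebesgue decomposition, and the weak continuity of the normalization procedure of Lemma \ref{lem:normalizing_pickands}. The series-of-Cantor-measures construction sketched above is routine once one checks atomlessness and the $\lambda$-null support property, both of which are clearly stable under countable convex combinations.
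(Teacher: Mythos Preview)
Your proposal is correct and follows essentially the same route as the paper: form a convex combination of a probability measure approximating $\vartheta$ with a fixed full-support discrete (respectively singular) reference measure, then renormalize via Lemma~\ref{lem:normalizing_pickands}. The only difference is that in (i) you first discretize $\vartheta$ to $\vartheta_n^{(0)}$ before mixing, whereas the paper simply mixes $\vartheta$ itself with the discrete reference measure $m_1$; your extra step is harmless but unnecessary, since all that is required is $\mathrm{supp}(\mu_n^{dis})=\mathbb{I}$, not that $\mu_n$ be purely discrete. For (ii) your argument is identical to the paper's, and your explicit series-of-Cantor-measures construction is a valid alternative to the citation the paper gives for the existence of a singular full-support probability measure.
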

\begin{proof}
    To prove the first assertion we proceed as follows: Let $\vartheta \in \mathcal{P}_\mathcal{A}$ be arbitrary but fixed and choose some discrete measure $m_1 \in \mathcal{P}(\mathbb{I})$ with full support. Then defining $\mu_n := (1-\frac{1}{n})\vartheta + \frac{1}{n}m_1$ yields that $\mu_n \in \mathcal{P}(\mathbb{I})$, $\mathrm{supp}(\mu_n^{dis}) = \mathbb{I}$ and the sequence $(\mu_n)_{n \in \mathbb{N}}$ converges to $\vartheta$ weakly. Normalizing the sequence 
    $(\mu_n)_{n \in \mathbb{N}}$ in the sense of Lemma \ref{lem:normalizing_pickands}, we obtain measures $\vartheta_n \in \mathcal{P}_\mathcal{A}$ with $\mathrm{supp}(\vartheta_n^{dis}) = \mathbb{I}$ such that
    $(\vartheta_n)_{n \in \mathbb{N}}$ converges weakly to $\vartheta$.\\
    On the other hand, choosing a singular probability measure $m_2 \in \mathcal{P}(\mathbb{I})$ with $\mathrm{supp}(m_2) = \mathbb{I}$ (see \cite{Sanchez2012,stromberg1965} for an example for $m_2$) and setting $\nu_n := (1-\frac{1}{n})\vartheta + \frac{1}{n}m_2$ yields the desired result.
\end{proof}

\begin{thebibliography}{99}
\bibitem{beirlant2004}
J.~Beirlant, Y.~Goegebeur, J.~Segers and J.~Teugels,
\newblock
Statistics of extremes: Theory and Applications,
\newblock
Wiley Series in Probability and Statistics, John Wiley and Sons Ltd., Chichester, 2004.

\bibitem{bruck1997}
A.M.~Bruckner, B.S.~Thomson,
\newblock
Real Analysis,
\newblock
Prentice-Hall, London, 1997.

\bibitem{darsow}
W.F. Darsow, B. Nguyen, E.T. Olsen, 
\newblock Copulas and Markov processes,
\newblock Illinois J. Math. 36 (1992) 600-642. 

\bibitem{durante2022}
F.~Durante, J.~Fernández-Sánchez and C.~Ignazzi,
\newblock Baire category results for stochastic orders,
\newblock Rev. Real Acad. Cienc. Exactas Fis. Nat. - A: Mat. 116 (2022) 188.


\bibitem{cat_exchange}
F.~Durante, J.~Fernández-Sánchez and W.~Trutschnig,
\newblock Baire category results for exchangeable copulas,
\newblock Fuzzy Sets Syst. 284 (2016) 146-151.

\bibitem{cat_quas_cop}
F.~Durante, J.~Fernández-Sánchez and W.~Trutschnig,
\newblock Baire category results for quasi–copulas,
\newblock Depend. Model. 4 (2016) 215-223.

\bibitem{dur_princ}
F.~Durante and C.~Sempi,
\newblock Principles of Copula Theory,
\newblock Chapman and Hall/CRC, 2015.

\bibitem{falk2011}
M.~Falk, J.~H\"asler, R.D.~Reiss,
\newblock
Laws of Small Numbers: Extremes and Rare Events,
\newblock
Springer, Basel, 2011.

       \bibitem{Feldman}
    D.~Feldman,
    \newblock Extreme Doubly Stochastic Measures with Full Support,
    \newblock Proc. Am. Math. Soc. 114 (1992) 919–922.
    
    \bibitem{typ_cop_sing}
    J.~Fernández Sánchez and W.~Trutschnig,
    \newblock A typical copula is singular,
    \newblock 	J. Math. Anal. Appl. 430 (2015) 517-527.


    \bibitem{Sanchez2012}
    J.~Fernández Sánchez, P.~Viader, J.~Paradis and M.~Díaz Carrillo, \newblock A singular function with
a non-zero finite derivative,
\newblock Nonlinear Anal-Theor. 15 (2012) 5010-5014.


    \bibitem{gariepy}
    R.F.~Gariepy and W.P.~Ziemer,
    \newblock Modern Real Analysis,
    \newblock PWS Pub., 1995.

\bibitem{haan1977}
L.d.~Haan and S.I.~Resnick,
\newblock Limit theory for multivariate sample extremes,
\newblock 	Probab. Theory Relat. Fields 40 (1977) 317-337.

\bibitem{stromberg1965}
E.~Hewitt, K.~Stromberg,
\newblock Real and Abstract Analysis, A Modern Treatment of the Theory
of Functions of a Real Variable,
\newblock Springer-Verlag, New York, 1965.


\bibitem{jaw2013}
P.~Jaworski, F.~Durante and W.K.~H{\"a}rdle,
\newblock Copulae in Mathematical and Quantitative Finance,
volume 213 of Lecture Notes in Statistics - Proceedings,
\newblock Springer, Berlin Heidelberg, 2013.

\bibitem{jaw2010}
P.~Jaworski, F.~Durante, W.K.~H{\"a}rdle and T.~Rychlik, editors,
\newblock Copula Theory and its Applications, volume
198 of Lecture Notes in Statistics - Proceedings,
\newblock
Springer, Berlin Heidelberg, 2010.

\bibitem{joe2014}
H.~Joe,
\newblock Dependence Modeling with Copulas,
\newblock Chapman and Hall/CRC, London, 2014.

\bibitem{Kallenberg}
O.~Kallenberg,
\newblock Foundations of modern probability,
\newblock Springer-Verlag, New York, 2002.

\bibitem{Kannan1996}
R.~Kannan, C.K.~Krueger, \newblock Advanced analysis on the real line.
\newblock Springer Verlag, New York,
1996.


\bibitem{mult_arch}
T.~Kasper, N.~Dietrich and W.~Trutschnig,
\newblock On convergence and mass distributions of multivariate Archimedean copulas and their interplay with the Williamson transform,
\newblock J. Math. Anal. Appl. 529 (2024) 127555.

\bibitem{bernoulli}
T.~Kasper, S.~Fuchs and W.~Trutschnig
\newblock On weak conditional convergence of bivariate Archimedean and Extreme Value copulas, and consequences to nonparametric estimation,
\newblock Bernoulli 4 (2021) 2217 - 2240.

\bibitem{kim}
C.~Kim,
\newblock Uniform approximation of doubly stochastic operators,
\newblock 	Pac. J. Math. 26 (1968) 515 - 527.

\bibitem{Klenke}
A.~Klenke,
\newblock Wahrscheinlichkeitstheorie,
\newblock Springer Lehrbuch Masterclass Series, Berlin Heidelberg, 2008.

\bibitem{Lange}
K.~Lange,
\newblock Decompositions of Substochastic Transition Functions,
\newblock  	Proc. Am. Math. Soc. 37 (1973) 575 – 580.

\bibitem{Mai2011}
J.F.~Mai, M.~Scherer,
\newblock
Bivariate extreme-value copulas with discrete Pickands dependence
measure,
\newblock Extremes 14 (2011) 311-324.


\bibitem{Mai2012}
J.-F.~Mai and M.~Scherer.
\newblock
Simulating copulas: Stochastic Models, Sampling Algorithms, and Applications,
volume 4 of Series in Quantitative Finance,
\newblock
Imperial College Press, London, 2012.

\bibitem{McNeil2005}
A.~McNeil, R.~Frey, P.~Embrechts,
\newblock
Quantitative risk management,
\newblock
Princeton University Press, New Jersey, 2005.


\bibitem{Moshe1977}
M.~Moshe, V.J.~Mizel,
\newblock
Measurability of Partial Derivatives,
\newblock
Proc. Am. Math. Soc. 63 (1977) 236 – 38.

\bibitem{mul2011}
S.~Mulinacci, F.~Gobbi and U.~Cherubini,
\newblock 
Dynamic Copula Methods in Finance, 2011.

\bibitem{nelsen2006}
 R.B.~Nelsen,
 \newblock An introduction to copulas. Springer Series in Statistics,
 \newblock
 Springer, New York, second edition, 2006.


\bibitem{oxtoby1980}
J.C.~Oxtoby,
\newblock
Measure and category. A survey of the analogies between topological and measure spaces,
\newblock
2nd ed. Springer-Verlag, 1980.


\bibitem{Pickands}
J.~Pickands,
\newblock
Multivariate extreme value distributions,
\newblock
in: Proceedings 43rd Session International Statistical Institute 2, 1981, pp. 859-878.

\bibitem{Pollard2001}
D.~Pollard,
\newblock
A User’s Guide to Measure Theoretic Probability.
\newblock
Cambridge University Press, 2001.


\bibitem{diff_char}
R.~Ricci,
\newblock A differential characterization of the d-increasingness property,
\newblock Fuzzy Sets Syst. 433 (2022) 79-95.

\bibitem{rudin1974}
W.~Rudin,
\newblock Real and Complex Analysis,
\newblock McGraw-Hill, 1974.

\bibitem{rueschen2013}
L.~R\"uschendorf,
\newblock
Mathematical risk analysis,
\newblock
Springer Series in Operations Research and Financial Engineering, Springer, Heidelberg, 2013.

\bibitem{salva2007}
G.~Salvadori, C.~De Michele, N.T.~Kottegoda, and R.~Rosso,
\newblock
Extremes in Nature - An Approach Using Copulas,
\newblock
Springer Dordrecht, 2007.

\bibitem{sklar}
A.~Sklar,
\newblock Fonctions de r\'epartition \`a N dimensions et leurs marges,
\newblock Publ. Inst. Statist. Univ. Paris 8 (1959) 229–231.

\bibitem{twan}
J.A.~Tawn,
\newblock
Bivariate extreme value theory: models and estimation,
Biometrika 75 (1988) 397-415.

\bibitem{sechser_paper}
W.~Trutschnig,
\newblock On a strong metric on the space of copulas and its induced dependence measure,
\newblock J. Mathe. Anal. Appl. 384 (2011) 690-705.



\bibitem{evc-mass}
W.~Trutschnig, M.~Schreyer and J.~Fernández Sánchez,
\newblock Mass distributions of two-dimensional extreme-value copulas and related results,
\newblock Extremes 19 (2016) 405-427.
 
\end{thebibliography}
\end{document}